\newcounter{RomanNumber}
\newcommand{\MyRoman}[1]{\setcounter{RomanNumber}{#1}\Roman{RomanNumber}}
\newtheorem{theorem}{Theorem}%[section]
\newtheorem{lemma}[theorem]{Lemma}
\theoremstyle{definition}
\newtheorem{definition}[theorem]{Definition}
\newtheorem{corollary}[theorem]{Corollary}
\newtheorem{remark}[theorem]{Remark}
\theoremstyle{notation}
\newcommand{\be}{\begin{equation}}
\newcommand{\ee}{\end{equation}}
\newcommand{\Z}{\mathbb{Z}}
\newcommand{\q}{\scriptscriptstyle \mathbb{Q}}
\newcommand{\bH}{\mathbb{H}}
\newcommand{\GCh}{\mathrm{GCh}}
\newcommand{\qqed}{\hfill\Box}
\begin{document}
\begin{sloppypar}

\keywords{}

% \title[short text for running head]{full title}
\title
[Fractional structures]{Fractional structures on bundle gerbe modules and fractional classifying spaces}
%author two information

\author{Fei Han}
\address{Department of Mathematics,
National University of Singapore, Singapore 119076}
%\curraddr{}
\email{mathanf@nus.edu.sg}
%\thanks{}
%\urladdr{http://www.math.nus.edu.sg/~mathanf}

%  author three information
 \author{Ruizhi Huang}
\address{Institute of Mathematics and Systems Sciences, Chinese Academy of Sciences, Beijing 100190, China}
\email{huangrz@amss.ac.cn}  
\urladdr{https://sites.google.com/site/hrzsea
%\\ https://hrzsea.github.io/Huang-Ruizhi/
}
%\thanks{}

%author two information
 \author{Varghese Mathai}
\address{School of Mathematical Sciences,
University of Adelaide, Adelaide 5005, Australia}
\email{mathai.varghese@adelaide.edu.au}
%\thanks{}

%    \subjclass is required.
%\subjclass[2010]{Primary 58J26, Secondary 57S20, 53C27, 11F55, 19K35}
%\keywords{Elliptic genera, Witten genus, proper actions, loop Dirac induction, modularity, group $C^*$-algebras, representation ring, operator K-theory}
\date{}
%\thanks{}

\maketitle

\begin{abstract} 
We study the homotopy aspects of the twisted Chern classes of torsion bundle gerbe modules. Using Sullivan's rational homotopy theory, we realize the twisted Chern classes at the level of classifying spaces. The construction suggests a notion, which we call fractional U-structure serving as a universal framework to study the twisted Chern classes of torsion bundle gerbe modules from the perspective of classifying spaces. Based on this, we introduce and study higher fractional structures on torsion bundle gerbe modules parallel to the higher structures on ordinary vector bundles. 
\end{abstract}

\tableofcontents

%%%%%%%%%%%%%%%%%%%%%%%%%%%%%%%%%%%%%%%%%%%%%%%%%%%%%%%%%%%%%%%%%%%%%%%
%%%%%%%%%%%%%%%%%%%%%%%%%%%%%%%%%%%%%%%%%%%%%%%%%%%%%%%%%%%%%%%%%%%%%%%
\newpage
\section*{Introduction}
\label{sec: intro}
Roughly speaking, torsion bundle gerbe modules are twisted versions of vector bundles, which can serve as cocycles of twisted K-theory where the twist is a torsion 3-class. This paper is aimed to study higher structures on torsion bundle gerbe modules from the perspective of homotopy theory.

$\, $

{\bf Higher structures.} 
The higher structures on an oriented vector bundle, like spin, string and SU, U$\langle 6\rangle$ (when the vector bundle is complex) are important in topology and geometry. For instance, in the real case, the significance of spin structure is illustrated by the classical Atiyah-Singer index theory \cite{AS}, which is the cornerstone of spin geometry (c.f. \cite{LM}). Further, the existence of a string structure on the tangent bundle of a manifold implies the modularity \cite{Zag} of the Witten genus \cite{Wit1}, whose homotopy refinement leads to the profound theory of topological modular form developed by Hopkins and Miller \cite{Hop}. In the complex case, Adams \cite{Ada} initiated the study of algebraic topology of the higher structures including SU and U$\langle 6\rangle$. For a Calabi-Yau manifold, its complex tangent bundle has SU structure. In their celebrated paper, \cite{AHS} Ando-Hopkins-Strikland constructed the $\sigma$-orientations from $MU\langle 6\rangle$ to elliptic spectra. 

On the other hand, the loop space aspects of the higher structures were motivated by quantum anomaly in physics \cite{Kil}.
The so-called loop orientable and loop spin structures are transgressed from spin and string structures respectively \cite{Mcl}. More geometrically, the additional fusive structures on higher loop structures were introduced by Stolz-Teichner \cite{ST}, and were further investigated by Waldorf \cite{Wal1} and Kottke-Melrose \cite{KM}.

$\, $

{\bf Bundle gerbe modules.} 
When the base manifold is equipped with an $S^1$-gerbe, there are theories of twisted versions of vector bundles, which form cocycles of twisted K-theory (see the developments of the theory in \cite{DK, Ros, AS1, AS2, FHT1, FHT2, FHT3} and the physics aspects in \cite{MM, Wit2}). Bundle gerbe is a model of $S^1$-gerbe and bundle gerbe module is a twisted version of vector bundles. Bundle gerbes were introduced by Murray \cite{Mu} as geometrization of degree 3 cohomology classes, while bundle gerbe modules over a given bundle gerbe were introduced by Bouwknegt, Carey, Mathai, Murray and Stevenson \cite{BCMMS} as cocycles of twisted $K$-theory.

The twisted Chern classes for twisted K-theory have been studied through a couple of approaches \cite{AS2, BCMMS}. In particular, regarding bundle gerbe modules, in \cite{BCMMS} Bouwknegt, Carey, Mathai, Murray and Stevenson introduced the twisted Chern character from the perspective of Chern-Weil theory. When the bundle gerbe modules are finite ranks, the Dixmier-Douady class of the bundle gerbe must be a torsion class \cite{BCMMS}. In this case, we call them torsion bundle gerbe modules. \footnote{We remark that Tomoda \cite{Tom} proved the splitting principle of torsion bundle gerbe modules and reproduced the twisted Chern classes from this perspective.} 
The fractional index theorem of Mathai-Melrose-Singer established in \cite{MMS, MMS2} can be viewed as the quantization of the theory of torsion bundle gerbe modules and their twisted Chern classes.

$\, $

{\bf Summary of main results.} 
The purpose of this paper is to study the homotopy aspects of torsion bundle gerbe modules and their twisted Chern classes. 
Since in this paper we only consider torsion bundle gerbes and their modules, we will drop the adjective {\em torsion}. 
Using Sullivan's rational homotopy theory \cite{Sul}, we realize twisted Chern classes at the level of classifying spaces.
More precisely, we construct a {\it relative classifying space} for bundle gerbe modules, in the sense that it classifies bundle gerbe modules up to twisted Chern classes. The construction suggests a slightly more general homotopy notion, {\it fractional U-structure}, which serves as a universal framework to study the twisted Chern classes of bundle gerbe modules from the perspective of classifying spaces. 
Indeed, the relative classifying space is exactly the classifying space of fractional U-structure, which we will refer to as {\it fractional classifying space}.
In due course, we explicitly calculate the precise formulae resolving the relation between the twisted Chern classes and the ordinary Chern classes associated to bundle gerbe modules.

An advantage of the homotopy construction is that it allows us to study higher structures on bundle gerbe modules from the perspective of classifying spaces, in a pattern similar to the higher structures on ordinary vector bundles. For bundle gerbe modules, it is natural to define the {\it fractional SU-structure} and {\it fractional U$\langle6\rangle$-structure} in terms of vanishment of the first two twisted Chern classes similar to the SU- and U$\langle6\rangle$ structure on ordinary complex vector bundles in terms of the vanishment of the first two ordinary Chern classes. It is worthwhile to remark that geometrically such fractional U$\langle 6\rangle$-condition is the obstruction to the modularity of the projective elliptic genera and graded Chern character of Witten gerbe modules \cite{HM1, HM2}. The fractional SU- and U$\langle6\rangle$-structures are the twisted counterparts of SU- and U$\langle6\rangle$-structures respectively. To study them from the perspective of classifying spaces, we carefully construct a delicate {\it relative Whitehead tower} of the relative classifying space of bundle gerbe modules, and identify the respective {\it fractional classifying spaces} of the fractional SU- and U$\langle 6\rangle$-structures. We are then able to characterize the two higher fractional structures as relative homotopy lifting problems with respect to the relative Whitehead tower, and count the number of these higher fractional structures analogous to the untwisted cases. We expect that there would be some interesting applications of the fractional U$\langle6\rangle$-structures. Indeed, as pointed out in Remark \ref{hmremark}, associated to fractional U$\langle6\rangle$ bundle gerbe modules over U$\langle6\rangle$-manifolds, there are explicitly constructed modular projective elliptic genera in \cite{HM1, HM2}.
Following the work of Ando-Hopkins-Strikland \cite{AHS}, it might lead to a fractional version of the $\sigma$-orientations.

With the relative classifying space, we can also study higher fractional structures of bundle gerbe modules from the loop space point of view in a pattern similar to loop orientable and loop spin structures. Indeed, with the classical transgression map to loop spaces, it is natural to define the {\it fractional loop U-structure} and {\it fractional loop SU-structure} in terms of vanishment of the transgressions of the first two twisted Chern classes. To study them from the perspective of classifying spaces, we also carefully construct an explicit {\it relative Whitehead tower} of the looped relative classifying space of bundle gerbe modules, and identify the respective {\it fractional classifying spaces} of the fractional loop U- and SU-structures. As in the non-loop cases, we are then able to characterize the two higher fractional loop structures as certain relative homotopy lifting problems and also count them. 

The higher fractional non-loop and loop structures can be compared either from the perspective of classifying spaces or from the perspective of transgressions. In either way, it can be shown that higher fractional non-loop structures are generally stronger than their loop counterparts. In particular, fractional SU implies fractional loop U, while fractional U$\langle6\rangle$ implies fractional loop SU. Moreover, the distinct fractional non-loop structures transgress to the corresponding distinct fractional loop structures. Under certain conditions on the topology of the base manifold, the higher fractional non-loop and loop structures are equivalent, and there are one-to-one correspondences between distinct structures. 

$\, $

{\bf A graphic illustration of higher structures.} 
Before closing the introduction, let us illustrate the hierarchy of fractional complex structures by the right diagram of (\ref{summdiag}). In contrast, the corresponding hierarchy of ordinary real structures is depicted on the left diagram of (\ref{summdiag}) for comparison. Here, a straight arrow $\mathcal{A}\rightarrow \mathcal{B}$ means that a $\mathcal{B}$-structure is lifted to an $\mathcal{A}$-structure, and hence $\mathcal{A}$ is one level higher than $\mathcal{B}$, while a wave arrow $\mathcal{C}\leadsto{\rm loop}~\mathcal{D}$ means that a $\mathcal{C}$-structure transgresses to a loop $\mathcal{D}$-structure, and hence $\mathcal{C}$ is generally stronger than loop $\mathcal{D}$. For more explicit explanations of (\ref{summdiag}), one may refer to Subsection \ref{sec: highfracintro}, \ref{sec: highloopfracuintro} and \ref{sec: highfracvsintro} for the right diagram, and to Subsection \ref{sec: backintro} for the left diagram. 

\begin{gather}
\begin{aligned}
\resizebox{\textwidth}{!}{
\xymatrix{
{\rm string}                                         \ar[dd]    \ar@{.>}@{~>}[dr]   &                  &  
{\rm fractional}~{\rm U}\langle6\rangle  \ar[dd]    \ar@{.>}@{~>}[dr]  &              \\
&       {\rm loop}~{\rm spin}  \ar[dl]  &  & {\rm fractional}~{\rm loop}~{\rm SU}  \ar[dl] \\
{\rm spin}   \ar[dd]    \ar@{.>}@{~>}[dr]                                           &    & 
{\rm fractional}~{\rm SU}   \ar[dd]    \ar@{.>}@{~>}[dr]                     &    \\
&  {\rm loop}~{\rm orientable} \ar[dl]     &&   {\rm fractional}~{\rm loop}~{\rm U} \ar[dl]    \\
{\rm orientable}                                                                           &                                                       &
{\rm fractional}~{\rm U} 
   }
}
\end{aligned}
\label{summdiag}
\end{gather}
\begin{gather*}
\begin{aligned}
\resizebox{\textwidth}{!}{
\xymatrix{ {\rm hierarchy}~{\rm of}~{\rm  ordinary}~{\rm  real}~{\rm  structures}~ &   {\rm hierarchy}~{\rm  of}~{\rm  fractional}~{\rm  complex}~{\rm  structures}~
}
}
\end{aligned}
\end{gather*}

%----------------------------------------------------------------------------------------------------------------------------------------------------------

$\, $

{\bf Organization of the paper.} 
\label{sec: organintro}
The main body of the paper starts with a comprehensive introduction of our main results in Section \ref{sec: result} with background information on the relevant theories on ordinary vector bundles. More explicitly, in Subsection \ref{sec: backintro}, we review the untwisted higher structures mainly from the perspective of classifying spaces. In Subsection \ref{sec: fracuintro}, we introduce our results on the homotopy realization of twisted Chern classes, and define the fractional U-bundle as a homotopy generalization of bundle gerbe modules. In Subsection \ref{sec: highfracintro} and \ref{sec: highloopfracuintro}, we review our results on the higher fractional non-loop and loop structures respectively, followed by their comparison in Subsection \ref{sec: highfracvsintro}. In particular, the main results are summarized in Theorem \ref{relclassthmintro}, \ref{fracsunthm}, \ref{fracu6nthm}, \ref{fracloopunthm}, and \ref{fracloopsunthm}.

The next three sections are devoted to realize the twisted Chern classes explicitly from the perspective of classifying spaces.
In Section \ref{sec: split}, we recall necessary background on bundle gerbe modules.
In Section \ref{sec: fchern}, we show the explicit formulae and combinatorics of the twisted Chern classes. 
Motivated by the computation, in Section \ref{sec: Bfchern} we apply Sullivan's rational homotopy theory to realize the twisted Chern classes from the perspective of classifying spaces, and propose a notion of fractional U-structure. This is essentially summarized in Theorem \ref{relclassthmintro}.

The last two sections are about the study higher fractional structures.
In Section \ref{sec: strongfrac}, we introduce fractional SU- and U$\langle 6\rangle$-structures, while in Section \ref{sec: weakfrac}, we introduce fractional loop U- and SU-structures. In each case, we characterize the existence of the higher fractional structures as certain relative homotopy lifting problem and count such structures. In particular, we construct the two relative Whitehead towers for fractional structures. These are summarized in Theorem \ref{fracsunthm}, \ref{fracu6nthm}, \ref{fracloopunthm}, and \ref{fracloopsunthm}. In Section \ref{sec: weakfrac}, we compare the fractional loop structures with the fractional non-loop structures. 

The paper ends up with a \hyperref[AppendixA]{table} to record the characteristic classes appeared in this paper.

$\, $

\noindent{\em Conventions:}
\label{Convention}
\begin{itemize}
\item We always use $\simeq$ to denote homotopy equivalence;
\item In this paper, the spaces $Y$ and $M$ under consideration are always assumed to be connected;
\item Throughout the paper, $H^\ast(X)$ is used to denote the singular cohomology $H^\ast(X;\mathbb{Z})$ with integral coefficient;
\item For any class $x\in H^k(X; R)$ in coefficient $R$, it is represented by a map $X\longrightarrow K(R,k)$ into Eilenberg-MacLane space under the Brown representation theorem. For the sake of simplicity, we always use the symbol $x$ to denote both the map and its represented class; further, for a given map $g: Y\rightarrow X$, we may denote $x:=g^\ast(x)$ by abuse of notation. This convention will be frequently used for liftings of universal characteristic classes.
\item The notations of characteristic classes are consistent throughout the paper. For convenience, they are summarized in the \hyperref[AppendixA]{table} at the end of the main text for reference. Following the last convention, we use the same notation for each universal characteristic classes and its liftings through appropriate maps defined in the paper, such as for $c_i$, $z_i$, $c_i^{\q}$, $z_i^{\q}$, $\bar{c}_1$ or $\bar{z}_1$.
\end{itemize}

$\, $

\noindent{\bf Acknowledgement} 
Fei Han was partially supported by the grant AcRF R-146-000-218-112 from National University of Singapore. 

Ruizhi Huang was supported in part by the National Natural Science Foundation of China (Grant nos. 11801544 and 11688101), the National Key R\&D Program of China (No. 2021YFA1002300), the Youth Innovation Promotion Association of Chinese Academy Sciences, and the ``Chen Jingrun’’ Future Star Program of AMSS. 

Varghese Mathai was supported by funding from the Australian Research Council, through the Australian Laureate Fellowship FL170100020.

%%%%%%%%%%%%%%%%%%%%%%%%%%%%%%%%%%%%%%%%%%%%%%%%%%%%%%%%%%%%%%%%%%%%%%%
\newpage
\section{Fractional structures and statement of results}
\label{sec: result}
%----------------------------------------------------------------------------------------------------------------------------------------------------------
\subsection{Background} 
\label{sec: backintro}
Before we study fractional structures on bundle gerbe modules, let us recap the relevant theories on ordinary vector bundles, for which one can refer to the left diagram of (\ref{summdiag}) for guidance.

Let $V$ be an oriented real vector bundle of rank $m$ over a given manifold $M$.
If some characteristic classes of $V$ vanish, one can expect stronger information on the geometry and topology of $V$.
For instance, $V$ is called {\it spin} if its second Stiefel-Whitney class $\omega_2(V)=0$. 
The classical Atiyah-Singer index theory \cite{AS} illustrates the significance of the spin condition on tangent bundles of manifolds by integrating geometry, topology and analysis together on manifolds using Dirac operators. 
Furthermore, $V$ is called {\it string} if it is spin and the first spin class $\frac{1}{2}p_1(V)=0$. When the tangent bundle of a manifold is string, the Witten genus \cite{Wit1} is a modular form \cite{Zag}, whose homotopy refinement leads to the profound theory of {\em topological modular form} developed by Hopkins and Miller \cite{Hop}.

From the perspective of homotopy theory, these higher structures can be understood through classifying spaces. Let
\[
\mathfrak{f}: M\stackrel{}{\longrightarrow} BSO(m),
\]
be the classifying map of $V$, where $SO(m)$ is the $m$-th special orthogonal group and $B$ is the {\it classifying functor}. The spin and string structures can be described through liftings of the {\it structure group} $SO(m)$. Indeed, there is the {\it Whitehead tower} of the {\it classifying space} $BSO(m)$
\[\label{whtowersoeq}
\cdots\stackrel{}{\longrightarrow} BString(m)\stackrel{B\mathfrak{i}_3}{\longrightarrow} BSpin(m)\stackrel{B\mathfrak{i}_2}{\longrightarrow} BSO(m),
\]
where $Spin(m)$ is the $m$-th spin group, and the string group $String(m)$ is determined by the universal group extension (cf. \cite{ST})
\[\label{stringexteqintro}
\{1\}\stackrel{}{\longrightarrow} K(\mathbb{Z},2) \stackrel{}{\longrightarrow} String(m)\stackrel{\mathfrak{i}_3}{\longrightarrow}  Spin(m) \stackrel{}{\longrightarrow} \{1\}
\]
with $K(\mathbb{Z},2)$ being the Eilenberg-MacLane space such that $\pi_2(K(\mathbb{Z},2))\cong \mathbb{Z}$. The bundle $V$ being spin is equivalent to that its structure group $SO(m)$ can be lifted to $Spin(m)$, that is, $\mathfrak{f}$ factors through $B\mathfrak{i}_2$. Similarly, $V$ being string is equivalent to that the structure group can be further lifted to $String(m)$, that is, $\mathfrak{f}$ factors through $B\mathfrak{i}_3$. 
By the standard arguments in homotopy theory, it can be shown that distinct spin and string structures are parametrized by $H^1(M;\mathbb{Z}/2\mathbb{Z})$ and $H^3(M;\mathbb{Z})$ respectively (cf. \cite{LM}).

Moreover, the physical motivation from quantum anomaly \cite{Kil} leads to the study of spin and string structures via the {\it free loop space} $LM$ with $L$ being the {\it free loop functor}. Indeed, the looped higher structures can be understood via the transgression map (see (\ref{freesuspen})) of the characteristic classes of $V$. If the transgression of $\omega_2(V)$ vanishes, $V$ is called {\it loop orientable}; further, if $V$ is spin and the transgression of $\frac{1}{2}p_1(V)$ vanishes, $V$ is called {\it loop spin} \cite{Mcl}. 

To understand these loop structures through loop spaces of classifying spaces, consider the looped classifying map
\[
L\mathfrak{f}: LM\stackrel{}{\longrightarrow} BLSO(m)
\]
of $V$. Recall that $LG$ is a {\it loop group} for a topological group $G$.
On the free loop level, there is the Whitehead tower of $BLSO(m)$ (cf. \cite{Mcl})
\[\label{whtowerlsoeq}
\cdots\stackrel{}{\longrightarrow} B\widehat{LSpin}(m)\stackrel{B\varsigma_3}{\longrightarrow} BLSpin(m)\stackrel{B\widehat{L\mathfrak{i}_2}}{\longrightarrow}
BL_0SO(m)\stackrel{B\varsigma_2}{\longrightarrow} BLSO(m),
\]
where $L_0SO(m)$ is the $0$-component of $LSO(m)$ containing the constant loop on identity, $\widehat{LSpin}(m)$ is determined by the universal circle extension
\[\label{lspinhatexteqintro}
\{1\}\stackrel{}{\longrightarrow} U(1) \stackrel{}{\longrightarrow} \widehat{LSpin}(m)\stackrel{\varsigma_3}{\longrightarrow}  LSpin(m) \stackrel{}{\longrightarrow} \{1\},
\]
and $\varsigma_2\circ \widehat{L\mathfrak{i}_2}=L\mathfrak{i}_2$.
The bundle $V$ being loop orientable is equivalent to that its structure group $LSO(m)$ can be lifted to $L_0SO(m)$ through $\varsigma_2$. 
Similarly, $V$ being loop spin is equivalent to that the structure group can be further lifted to $\widehat{LSpin}(m)$ through $\varsigma_3$.
It can be shown that distinct loop orientable and loop spin structures are parametrized by $H^0(LM;\mathbb{Z}/2\mathbb{Z})$ and $H^2(LM;\mathbb{Z})$ respectively \cite{Mcl}.

More geometrically, Stolz and Teichner gave the link of the string structure on $M$ to the fusive spin structure on $LM$ \cite{ST}. This was further developed by Waldorf \cite{Wal1, Wal2} and Kottke-Melrose \cite{KM}. In \cite{Bun}, Bunke studied the Pfaffian line bundle of certain family of Dirac operators and showed that string structures give rise to trivializations of that Pfaffian line bundle. See also the study of string structures from the differential and the twisted point of view \cite{Red, Sat, DHH}.

Now let us turn to the case of complex vector bundles. Let $E$ be a complex bundle of rank $n$ over $M$.
$E$ is called {\it SU} if its first Chern class $c_1(E)=0$. Further, $E$ is called {\it U$\mathit{\langle 6\rangle}$} if it is SU and the second Chern class $c_2(E)=0$. The $U\langle 6\rangle$ structure was first introduced from the perspective of Whitehead tower of classifying spaces by Adams \cite{Ada}. In the celebrated paper \cite{AHS}, Ando-Hopkins-Strikland constructed the $\sigma$-orientations from $MU\langle 6\rangle$ to elliptic spectra.
 
To understand these higher complex structures through classifying spaces, consider the classifying map 
\[
f: M\stackrel{}{\longrightarrow} BU(n)
\]
of $E$ with $U(n)$ being the $n$-th unitary group. There is the Whitehead tower of $BU(n)$
\begin{equation}\label{whtowerueq}
\cdots\stackrel{}{\longrightarrow} BU\langle6\rangle(n)\stackrel{Bi_3}{\longrightarrow} BSU(n)\stackrel{Bi_2}{\longrightarrow} BU(n),
\end{equation}
where $SU(n)$ is the $n$-th special unitary group and $U\langle6\rangle(n)$ is determined by the universal group extension (\cite{Sin, AHS})
\begin{equation}\label{u6exteqintro}
\{1\}\stackrel{}{\longrightarrow} K(\mathbb{Z},2) \stackrel{}{\longrightarrow} U\langle 6 \rangle (n)\stackrel{i_3}{\longrightarrow}  SU(n) \stackrel{}{\longrightarrow} \{1\}.
\end{equation}
The complex bundle $E$ being SU is equivalent to that $f$ can be lifted to $BSU(n)$ through $Bi_2$. Similarly, $E$ being U$\langle 6\rangle$ is equivalent to that $f$ can be further lifted to $BU\langle6\rangle(n)$ through $Bi_3$. One can similarly use the standard arguments in homotopy theory to show that the distinct SU and U$\langle6\rangle$ structures are parametrized by $H^1(M;\mathbb{Z})$ and $H^3(M;\mathbb{Z})$ respectively.

As in the real case, one can consider these higher complex structures after looping.
Indeed, there is a loop class $z_1(LE)\in H^1(LM;\mathbb{Z})$ as the transgression of $c_1(E)$. We call $E$ {\it loop U} if $z_1(LE)=0$. Further, suppose $E$ is SU. There is a loop class $z_2(LE)\in H^3(LM;\mathbb{Z})$ as the transgression of $c_2(E)$. We call $E$ is {\it loop SU} if $z_2(LE)=0$.

To understand these loop complex structures through loop spaces of classifying spaces, consider the looped classifying map
\[
Lf: LM\stackrel{}{\longrightarrow} BLU(n)
\]
of $E$.
On the free loop level there is the Whitehead tower of $BLU(n)$ (cf. \cite{Mcl})
\begin{equation}\label{whtowerlueq}
\cdots\stackrel{}{\longrightarrow} B\widehat{LSU}(n)\stackrel{B\iota_3}{\longrightarrow} BLSU(n)\stackrel{B\widehat{Li_2}}{\longrightarrow}
BL_0U(n)\stackrel{B\iota_2}{\longrightarrow} BLU(n),
\end{equation}
where $L_0U(n)$ is the $0$-component of $LU(n)$ containing the constant loop, $\widehat{LSU}(n)$ is determined by the universal circle extension
\begin{equation}\label{lsuhatexteqintro}
\{1\}\stackrel{}{\longrightarrow} U(1) \stackrel{}{\longrightarrow} \widehat{LSU}(n)\stackrel{\iota_3}{\longrightarrow}  LSU(n) \stackrel{}{\longrightarrow} \{1\},
\end{equation}
and $\iota_2\circ \widehat{Li_2}=Li_2$.
The bundle $E$ being loop U is equivalent to that its structure group $LU(n)$ can be lifted to $L_0U(n)$ through $\iota_2$. 
Similarly, $E$ being loop SU is equivalent to that the structure group can be further lifted to $\widehat{LSU}(n)$ through $\iota_3$.
It can be shown that the distinct loop U and loop SU structures are parametrized by $H^0(LM;\mathbb{Z})$ and $H^2(LM;\mathbb{Z})$ respectively (cf. \cite{Mcl}).

%----------------------------------------------------------------------------------------------------------------------------------------------------------
\subsection{Twisted Chern classes and fractional U-structure} 
\label{sec: fracuintro}
In this subsection, we will introduce Theorem \ref{relclassthmintro}, one of our main results, which realizes the twisted Chern classes from the perspective of classifying spaces. In particular, it provides a fractional classifying space of bundle gerbe modules up to twisted Chern classes, which brings us to a general homotopy notion, the fractional U-structure. The latter provides us a homotopy framework to study higher fractional structure in the sequel. 

Let $M$ be a smooth manifold. A {\it bundle gerbe} over $M$ consists of the following data: 
\begin{itemize}
\item[(i)] a {\it locally split map} $\pi: Y\rightarrow M$; 
\item[(ii)] a complex line bundle $L$ over $Y^{[2]}=Y\times_{\pi} Y$, the fiber product of $Y$ with itself over $\pi$; 
\item[(iii)] an isomorphism $$L_{(y_1, y_2)}\otimes L_{(y_2, y_3)}\to L_{(y_1, y_3)} $$ for every $(y_1, y_2)$ and $(y_2, y_3)$ in $Y^{[2]}$. 
\end{itemize}
The bundle gerbe has a characteristic class $d(L)=d(L, Y)\in H^3(M;\mathbb{Z})$, the {\it Dixmier-Douady class}. The Dixmier-Douady classs is the obstruction to the gerbe being trivial.

Let $E\rightarrow Y$ be a complex vector bundle. 
Let $\pi_i: Y^{[2]}\rightarrow Y$ be the map which omits the $i$-th element.
Suppose that $E$ is a {\it bundle gerbe module} of $L$, i.e. there is a complex bundle isomorphism
\[
\phi: L\otimes \pi_1^\ast(E)\cong \pi_2^\ast(E),
\]
which is compatible with the bundle gerbe multiplication in the sense that the two maps
$$L_{(y_1, y_2)}\otimes (L_{(y_2, y_3)}\otimes E_{y_3})\to L_{(y_1, y_2)}\otimes E_{y_2}\to E_{y_1} $$
and 
$$(L_{(y_1, y_2)}\otimes L_{(y_2, y_3)})\otimes E_{y_3}\to L_{(y_1, y_3)}\otimes E_{y_3}\to E_{y_1} $$
are the same. 

Assume the rank of $E$ is $n$, a finite positive integer. It is shown in \cite{BCMMS} that the Dixmier-Douady class $d(L)$ must be torsion, i.e. the bundle gerbe $(L,Y)$ is {\it flat}; and if $d(L)$ is a torsion class of order $l$, then $l|n$ and the tensor power $(Y, L^{\otimes l})$ has Dixmier-Douady class zero and it is therefore a trivial bundle gerbe. Fix a trivilization of $(Y, L^{\otimes l})$, or more precisely, fix a complex line bundle $\xi$ on $Y$ such that 
$$ L^{\otimes l}\cong \delta(\xi):=\pi_1^\ast(\xi^\ast)\otimes \pi_2^\ast(\xi).$$ 

By developing the splitting principle for bundle gerbe modules \cite{Tom}, Tomoda reproduced the twisted Chern classes $c_k^{l, \xi}(E)$ and showed that the Chern character constructed from these twisted Chern classes coincides with the twisted Chern character of Bouwknegt-Carey-Mathai-Murray-Stevenson \cite{BCMMS}. 

Let $\xi$ be classified by $a\in H^2(Y;\mathbb{Z})\cong [Y, BU(1)]$, the group of the homotopy classes of based maps, and $E$ classified by $f: Y\rightarrow BU(n)$. The data of bundle gerbe module $E$ over $L$ gives a map
\[\label{pseudoclassmapeq}
f^a:=(a ,f): Y\stackrel{}{\longrightarrow} BU(1)\times BU(n),
\]
which we call {\it pseudo-classifying map} of $E$. To emphasize the classifying maps, we denote the twisted Chern class $c_k^{l, \xi}(E)$ by $c_k^{l, a}(E)$ from now on. 

Denote by $X_{\mathbb{Q}}$ the rationalization of any nilpotent space $X$. 
Using Sullivan's rational homotopy theory \cite{FHT01}, we realize the twisted Chern classes at the level of classifying spaces. More precisely, we have the following theorem which will be proved in Subsection \ref{sec: fracclassifyingsp}. 
\begin{theorem}\label{relclassthmintro}
Let $n$ and $l$ be positive integers such that $l|n$, there is a canonical map 
\[
\phi_{l|n}: BU(1)\times BU(n) \stackrel{}{\longrightarrow} BU(n)_\mathbb{Q},
\]
such that for any bundle gerbe module $E$ of rank $n$ over a bundle gerbe $L$ of oder $l$,
\begin{itemize}
\item[(i)]~the pseudo-classifying map $f^a=(a, f)$ of $E$ descents to a map $f^l$ on $M$ through $\pi$ in the following homotopy commutative diagram
\begin{gather}
\begin{aligned}
\xymatrix{
Y \ar[d]^{\pi} \ar[r]^<<<<<{f^a} & 
BU(1)\times BU(n) \ar[d]^{\phi_{l|n}} \\
M\ar[r]^<<<<<<<<{f^l} &
BU(n)_\mathbb{Q},
}
\end{aligned}
\label{fracudiagintro}
\end{gather}
\item[(ii)]the $k$-th twisted Chern class $c_k^{l,a}(E)\in H^{2k}(M;\mathbb{Q})$ satisfies
 \[
c_k^{l,a}(E)=f^{l\ast}(c_k^{\q}),
\] 
for each $0\leq k\leq n$ with $c_k^{\q}$ being the $k$-th universal rational Chern class. 
\end{itemize}
\end{theorem}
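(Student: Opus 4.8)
The plan is to construct the map $\phi_{l|n}$ directly from the rational cohomology of $BU(n)$ and then verify the two assertions by chasing characteristic classes. Recall that $H^*(BU(n);\mathbb{Q}) = \mathbb{Q}[c_1^{\q},\dots,c_n^{\q}]$, so a map into $BU(n)_\mathbb{Q}$ out of a space $Z$ is determined, up to homotopy, by a choice of $n$ classes in $H^{2k}(Z;\mathbb{Q})$, $k=1,\dots,n$, subject to no relations. Taking $Z = BU(1)\times BU(n)$ with $H^*(Z;\mathbb{Q}) = \mathbb{Q}[a]\otimes\mathbb{Q}[c_1,\dots,c_n]$, I would \emph{define} $\phi_{l|n}$ by pulling back $c_k^{\q}$ to the polynomial in $a,c_1,\dots,c_n$ that is precisely the universal formula for the twisted Chern class computed in Section~\ref{sec: fchern}. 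Concretely, if $E$ has Chern roots $x_1,\dots,x_n$ and $\xi$ has first Chern class $a$, the bundle gerbe module condition forces $L^{\otimes l}\cong\delta(\xi)$, and the splitting principle of \cite{Tom} expresses $c_k^{l,a}(E)$ as the $k$-th elementary symmetric function in the shifted roots $x_j + \tfrac{1}{l}a$ (this is the combinatorial content already established earlier in the excerpt); so I set $\phi_{l|n}^*(c_k^{\q}) := e_k\!\bigl(x_1+\tfrac{1}{l}a,\dots,x_n+\tfrac1l a\bigr)$, expanded as an honest polynomial in $a$ and $c_1,\dots,c_n$. Since there are no relations among the $c_k^{\q}$ in $H^*(BU(n);\mathbb{Q})$, any such assignment is realized by a map into the rationalization $BU(n)_\mathbb{Q}$, and this defines $\phi_{l|n}$ canonically once $l$ and $n$ are fixed.

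For part~(ii), once $\phi_{l|n}$ is defined this way the identity $c_k^{l,a}(E) = f^{l*}(c_k^{\q})$ is essentially a tautology provided part~(i) holds: we would have $f^{l*}(c_k^{\q}) = (f^l)^*\phi$-type expression, but more directly, by the commutativity of~(\ref{fracudiagintro}) together with the fact that $\pi^*$ is injective on rational cohomology (because $\pi$ is locally split, hence admits local sections, and by a Mayer--Vietoris / descent argument $\pi^*\colon H^*(M;\mathbb{Q})\to H^*(Y;\mathbb{Q})$ is a split injection), it suffices to check the pulled-back identity on $Y$. There, $\pi^* f^{l*}(c_k^{\q}) = f^{a*}\phi_{l|n}^*(c_k^{\q}) = f^{a*}e_k(x_1+\tfrac1l a,\dots)$, which by construction equals $\pi^*$ of the twisted Chern class $c_k^{l,a}(E)$ as computed by Tomoda's splitting principle. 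Injectivity of $\pi^*$ then yields the identity on $M$.

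The real work is part~(i): producing the descended map $f^l$ on $M$ and the homotopy making~(\ref{fracudiagintro}) commute. The key point is that the composite $\phi_{l|n}\circ f^a\colon Y\to BU(n)_\mathbb{Q}$ must factor through $\pi$ up to homotopy. I would argue this by showing the two pullbacks $\pi_1^*(\phi_{l|n}\circ f^a)$ and $\pi_2^*(\phi_{l|n}\circ f^a)$ are homotopic as maps $Y^{[2]}\to BU(n)_\mathbb{Q}$, and then appealing to a descent/homotopy-colimit statement: a map out of $Y$ that is equivariant in this sense for the simplicial object $Y^{[\bullet]}$ descends (rationally) to $M$, because $M = |Y^{[\bullet]}|$ up to equivalence and $BU(n)_\mathbb{Q}$ is rationally a product of Eilenberg--MacLane spaces, so obstructions to descent and to uniqueness of the descended map live in $H^*(M; \pi_*(BU(n)_\mathbb{Q}))$ and can be controlled degree by degree. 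The homotopy $\pi_1^*(\phi_{l|n}\circ f^a)\simeq\pi_2^*(\phi_{l|n}\circ f^a)$ is where the bundle gerbe module isomorphism $\phi\colon L\otimes\pi_1^*E\cong\pi_2^*E$ enters: it gives $\pi_2^*E\cong L\otimes\pi_1^*E$, hence $c(\pi_2^*E)$ is computed from the roots $x_j + c_1(L)$ on $Y^{[2]}$, while $\pi_1^* a - \pi_2^* a = -l\,c_1(L)$ on $Y^{[2]}$ (from $L^{\otimes l}\cong\delta(\xi) = \pi_1^*\xi^*\otimes\pi_2^*\xi$, so $l\,c_1(L) = \pi_2^*a - \pi_1^*a$); substituting, the shifted roots $x_j^{(1)} + \tfrac1l\pi_1^*a$ and $x_j^{(2)} + \tfrac1l\pi_2^*a$ agree after using $x_j^{(2)} = x_j^{(1)} + c_1(L)$. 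So the \emph{classes} $e_k$ agree on $Y^{[2]}$, and since $BU(n)_\mathbb{Q}$ is a rational $H$-space / product of $K(\mathbb{Q},2k)$'s, agreement of classes upgrades to a homotopy of maps. I expect the main obstacle to be making this descent argument precise and checking it is coherent enough to produce an actual map $f^l$ (not merely the right classes), which is exactly where Sullivan's theory of minimal models is invoked in Subsection~\ref{sec: fracclassifyingsp}: one models everything by commutative DGAs, where descent along $\pi$ becomes the statement that the minimal model of $M_\mathbb{Q}$ maps to the relevant sub-DGA, and the homotopy commutativity of~(\ref{fracudiagintro}) is a chain-level identity that can be verified by the explicit polynomial formulae above.
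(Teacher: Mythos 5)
Your definition of $\phi_{l|n}$ (prescribing $\phi_{l|n}^\ast(c_k^{\q})$ and using that $BU(n)_\mathbb{Q}$ is a product of rational Eilenberg--MacLane spaces) is fine and is equivalent to the paper's Sullivan-model construction, but your treatment of (i) and (ii) has genuine gaps. First, part (ii) rests on the claim that $\pi^\ast\colon H^\ast(M;\mathbb{Q})\to H^\ast(Y;\mathbb{Q})$ is a (split) injection because $\pi$ is locally split. This is false: the Hopf bundle $S^3\to S^2$ is locally split and kills $H^2$, and for $Y$ a disjoint union of a contractible open cover $\pi^\ast$ annihilates all positive-degree cohomology. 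Without injectivity, a map $f^l$ produced by abstract descent is not pinned down by its pullback to $Y$ (different descents differ by $\ker\pi^\ast$), so your argument cannot identify $f^{l\ast}(c_k^{\q})$ with $c_k^{l,a}(E)$. Second, your descent argument for (i) is incomplete even granting the $Y^{[2]}$ computation: agreement (up to homotopy) of $\pi_1^\ast$ and $\pi_2^\ast$ of $\phi_{l|n}\circ f^a$ on $Y^{[2]}$ does not by itself descend a map, or even a cohomology class, along $|Y^{[\bullet]}|\simeq M$; one needs coherent descent data and vanishing of the higher differentials in the \v{C}ech-nerve spectral sequence, which you do not address beyond an appeal to obstruction theory. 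There is also a sign slip: by (\ref{pichernrooteq}) the invariant roots are $x_j-\tfrac{1}{l}a$, not $x_j+\tfrac{1}{l}a$; with your sign, $x_j^{(2)}+\tfrac1l\pi_2^\ast a$ and $x_j^{(1)}+\tfrac1l\pi_1^\ast a$ differ by $2c_1(L)$, so your own $Y^{[2]}$ invariance computation fails, and the resulting polynomial would not match Lemma \ref{pichernlemma} and \ref{uniphiclasslemma}.

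The missing idea, which is exactly how the paper proceeds, is that no descent argument is needed: the twisted Chern classes $c_k^{l,a}(E)$ are already classes on $M$, constructed via the projectivization $\mathbb{P}(E)\to M$ and Leray--Hirsch in (\ref{tchernreleq}) (Tomoda's splitting principle), so one simply \emph{defines} $f^l\colon M\to BU(n)_\mathbb{Q}$ to realize the tuple $(c_k^{l,a}(E))_k$ (in the paper, by setting $\widehat{f^l}(\widetilde{c}_k)=c_k^{l,a}(E)$ on Sullivan models; equivalently, via $BU(n)_\mathbb{Q}\simeq\prod_k K(\mathbb{Q},2k)$). Then (ii) holds by construction, and the homotopy commutativity of (\ref{fracudiagintro}) reduces to the cohomological identity $\pi^\ast(c_k^{l,a}(E))=f^{a\ast}\phi_{l|n}^\ast(c_k^{\q})$ from Lemma \ref{pichernlemma}, upgraded to a homotopy because the target is formal (the paper writes an explicit chain homotopy $H$). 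To repair your proposal you would have to replace the descent step and the injectivity claim by this use of the already-existing classes on $M$; as written, both (i) and (ii) are unproved.
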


Let $c_k\in H^{2k}(BU(n);\mathbb{Z})$ denote the $k$-th universal integral Chern class. Denote by $g$ the first Chern class $c_1\in H^2(BU(1);\mathbb{Z})$. It is clear that $f^{a\ast}(g)=a$.
The following theorem illustrates the precise relation between the twisted Chern classes, rational Chern classes and integral Chern classes. 

\begin{theorem} [\protect Lemma \ref{pichernlemma} and \ref{uniphiclasslemma}]\label{fracchernforthm}
With the same conditions and notations in Theorem \ref{relclassthmintro},
\begin{equation}\label{picherneqintro}
\begin{split}
&\phi_{l|n}^\ast(c_k^{\q})=\sum\limits_{i=0}^{k} \big(\frac{-1}{l}\big)^i \binom{n-k+i}{i} g^i c_{k-i},\\
&\pi^\ast(c_k^{l,a}(E))=\sum\limits_{i=0}^{k} \big(\frac{-1}{l}\big)^i \binom{n-k+i}{i} a^i c_{k-i}(E).
\end{split}
\end{equation}
\end{theorem}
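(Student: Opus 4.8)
The two formulae in \eqref{picherneqintro} are equivalent once we have Theorem~\ref{relclassthmintro}: the second follows from the first by pulling back along $f^a$ and using the commutativity of \eqref{fracudiagintro} together with $f^{a\ast}(g)=a$, $f^{a\ast}(c_{k-i})=c_{k-i}(E)$, and $\pi^\ast(c_k^{l,a}(E))=\pi^\ast f^{l\ast}(c_k^{\q})=f^{a\ast}\phi_{l|n}^\ast(c_k^{\q})$. So the entire content is the first identity, which describes $\phi_{l|n}^\ast$ on universal rational Chern classes. The strategy is to compute $\phi_{l|n}^\ast$ via the splitting principle, i.e. pull everything back along the product of the maps $BU(1)^n\to BU(n)$ and $BU(1)\to BU(1)$, where rational cohomology becomes a polynomial ring in Chern roots and the universal Chern classes become elementary symmetric functions.

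First I would recall, from the (yet-unseen) construction of $\phi_{l|n}$ in Subsection~\ref{sec: fracclassifyingsp}, that it is built rationally so as to encode the ``$E$ twisted by $\xi^{-1/l}$'' philosophy: on total rational Chern character it should act by multiplying the Chern character of the rank-$n$ bundle by $e^{-g/l}$, equivalently by sending the $n$ Chern roots $x_1,\dots,x_n$ of the $BU(n)$-factor to $x_1-\tfrac{g}{l},\dots,x_n-\tfrac{g}{l}$. Granting this description of $\phi_{l|n}$ on Chern roots, the identity becomes the purely algebraic statement
\[
e_k\!\left(x_1-\tfrac{g}{l},\dots,x_n-\tfrac{g}{l}\right)=\sum_{i=0}^{k}\Big(\tfrac{-1}{l}\Big)^{i}\binom{n-k+i}{i}\,g^{i}\,e_{k-i}(x_1,\dots,x_n),
\]
where $e_j$ is the $j$-th elementary symmetric polynomial and $e_j\mapsto c_j$, $e_j\mapsto c_j^{\q}$ under the respective splitting-principle identifications. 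This is the well-known expansion of elementary symmetric polynomials under a common shift of all variables; the coefficient $\binom{n-k+i}{i}$ is exactly the number of ways to choose $i$ of the shifted $-g/l$ summands after accounting for the remaining $n-i$ variables contributing to $e_{k-i}$ — concretely it is $\binom{n-(k-i)}{i}=\binom{n-k+i}{i}$.

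The steps, in order: (1) reduce the second equation of \eqref{picherneqintro} to the first, as above; (2) invoke the splitting principle, tensoring the universal diagram with the split bundles, so that it suffices to check the identity in $H^\ast\big(BU(1)^{n}\times BU(1);\mathbb{Q}\big)\cong\mathbb{Q}[x_1,\dots,x_n,g]$; (3) identify the map induced by $\phi_{l|n}$ on Chern roots as $x_j\mapsto x_j-g/l$, $g\mapsto g$, using the rational-homotopy definition of $\phi_{l|n}$ (here one cites Lemma~\ref{pichernlemma} for the precise normalization, or re-derives it from the requirement that $\phi_{l|n}^\ast(c_1^{\q})=c_1-\tfrac{n}{l}g$, which already pins down the shift); (4) expand $\prod$ or, equivalently, compute $e_k$ of the shifted variables by the binomial-type identity and collect powers of $g$; (5) translate back, replacing $e_j$ by $c_j$ and $c_j^{\q}$, and finally push forward to $M$ to obtain the second formula.

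The main obstacle is step~(3): getting the normalization of $\phi_{l|n}$ exactly right, i.e. being certain that the shift is $-g/l$ rather than $+g/l$ or some other scalar multiple, and that it is applied to all $n$ roots. This is where the definition of $\phi_{l|n}$ from Sullivan's rational homotopy theory and the explicit twisted-Chern-class formulae of Section~\ref{sec: fchern} must be brought together carefully; once the action on a single Chern root is confirmed (equivalently, once $\phi_{l|n}^\ast(c_1^{\q})$ is computed and matched against $c_1^{l,a}(E)=c_1(E)-\tfrac{n}{l}a$), the remaining steps are routine symmetric-function bookkeeping. A secondary point requiring a word of care is that $BU(n)_{\mathbb{Q}}$ is only a rational space, so these manipulations live in rational cohomology throughout — but since the twisted Chern classes are already rational classes, this costs nothing.
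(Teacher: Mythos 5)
Your proposal is essentially correct, and its computational core coincides with the paper's: the construction of $\phi_{l|n}$ via the maximal torus and a Sullivan model does act on the Chern roots of the $BU(n)$-factor by $x_j\mapsto x_j-\tfrac{g}{l}$ (this is exactly (\ref{torusphiclasseq})), and both formulae then reduce to the shift identity $e_k\bigl(x_1-\tfrac{g}{l},\dots,x_n-\tfrac{g}{l}\bigr)=\sum_{i=0}^{k}\bigl(\tfrac{-1}{l}\bigr)^{i}\binom{n-k+i}{i}g^{i}e_{k-i}$, whose coefficient you justify correctly (the paper verifies it by the ratio $\binom{n}{k}\binom{k}{k-i}/\binom{n}{k-i}=\binom{n-k+i}{i}$ in the proof of Lemma \ref{pichernlemma}). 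Where you diverge is in the treatment of the second formula: you derive $\pi^\ast(c_k^{l,a}(E))$ from the first formula by pulling back along $f^a$ and using the commutativity of Diagram (\ref{fracudiagintro}), i.e.\ by taking Theorem \ref{relclassthmintro} as a black box. The paper does the opposite: Lemma \ref{pichernlemma} is proved first and independently of $\phi_{l|n}$, by a direct geometric computation using Tomoda's splitting principle for bundle gerbe modules — the twisted Euler class gives $\pi^\ast(x_i^{l,a})=x_i-\tfrac{1}{l}a$ in (\ref{pichernrooteq}), and the same symmetric-function expansion yields the formula — and this lemma is then an \emph{input} to the construction of $f^l$ and to the proof that Diagram (\ref{fracudiagintro}) commutes. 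So your reduction is logically sound as stated (the theorem explicitly assumes the setting of Theorem \ref{relclassthmintro}), but it could not replace the paper's argument globally without circularity, since the commutativity you invoke is itself established using the $\pi^\ast$-formula; the paper's route buys an independent, geometric proof of the twisted-Chern-class formula, while yours buys economy once the fractional classifying space is in place. Your concern about pinning down the normalization of the shift is well placed and is resolved in the paper definitionally, since $\phi_{l|n}$ is constructed precisely so that (\ref{torusphiclasseq}) holds, matching $\pi^\ast(c_1^{l,a}(E))=c_1(E)-sa$ with $s=\tfrac{n}{l}$.
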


On the level of characteristic classes, the pair of maps ($f^a$, $f^l$) can be viewed as the classifying map of the bundle gerbe module $E$ mapping to the relative classifying space, the map $BU(1)\times BU(n) \stackrel{\phi_{l|n}}{\longrightarrow} BU(n)_\mathbb{Q}$.  Motivated by the above results, we make a definition for the situation slightly more general than bundle gerbe modules, which extracts the homotopy information of bundle gerbe modules, and further allows us to introduce higher structures on bundle gerbe modules.

\begin{definition}\label{fracundefintro} Let $\pi: Y\to M$ be a locally split map and $\xi$ a complex line bundle over $Y$ classified by $a:Y\to BU(1)$. Let $E$ be a rank $n$ complex vector bundle over $Y$ classified by a map $f: Y\rightarrow BU(n)$. Let $l$ be a positive integer such that $l|n$. $E$ is called an {\em $(a,\frac{1}{l})$-fractional $U(n)$-bundle} over $M$, or simply a {\em fractional U-bundle} if the pseudo-classifying map $f^a=(a, f)$ descents to a map $f^l$ on $M$ through $\pi$ in a homotopy commutative diagram (\ref{fracudiagintro}). 
\end{definition}

We call the map $BU(1)\times BU(n) \stackrel{\phi_{l|n}}{\longrightarrow} BU(n)_\mathbb{Q}$ the {\it fractional classifying space} of fractional U-structure, and the pair of maps ($f^a$, $f^l$) the {\it classifying map} of the fractional bundle $E$ with the {\it relative structure group} ($U(1)\times U(n)$, $U(n)_{\mathbb{Q}}$). We define the $k$-th {\it fractional Chern class} $c_k^{l,a}(E)$ of the fractional U-bundle $E$ to be $f^{l\ast}(c_k^{\q})\in H^{2k}(M;\mathbb{Q})$,
for each $0\leq k\leq n$ with $c_k^{\q}$ being the $k$-th universal rational Chern class. 

By Theorem \ref{relclassthmintro}, one can see that when the fractional bundle $E$ comes from a bundle gerbe module, the fractional Chern classes coincide with the twisted Chern classes. This justifies our choice of notation. We summarize this immediate but important fact in the following theorem.
\begin{theorem}\label{E>=fracunthm}
Let $E$ be a bundle gerbe module of rank $n$ over a flat bundle gerbe $(L, Y)$ of order $l$. Then $E$ admits a fractional U-structure determined by Diagram (\ref{fracudiagintro}). Moreover, for each $0\leq k\leq n$, its $k$-th twisted Chern class coincides with its $k$-th fractional Chern class. ~$\qqed$
\end{theorem}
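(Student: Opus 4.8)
The plan is to deduce Theorem~\ref{E>=fracunthm} almost entirely from Theorem~\ref{relclassthmintro}, treating the present statement as a direct corollary rather than as an independent result. First I would unwind the definitions: given a bundle gerbe module $E$ of rank $n$ over a flat bundle gerbe $(L,Y)$ whose Dixmier--Douady class has order $l$, Theorem~\ref{relclassthmintro} tells us that $l \mid n$, that we may fix a trivialization $\xi$ of $(Y, L^{\otimes l})$ classified by $a \in H^2(Y;\mathbb{Z})$, and that the pseudo-classifying map $f^a = (a,f)$ fits into the homotopy commutative square (\ref{fracudiagintro}) with the canonical map $\phi_{l|n}\colon BU(1)\times BU(n)\to BU(n)_{\mathbb{Q}}$ and some descended map $f^l\colon M \to BU(n)_{\mathbb{Q}}$. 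By Definition~\ref{fracundefintro}, this is exactly the assertion that $E$ is an $(a,\tfrac{1}{l})$-fractional $U(n)$-bundle over $M$, i.e.\ that $E$ admits a fractional U-structure determined by Diagram (\ref{fracudiagintro}). So the first half of the statement is immediate once one checks that the hypotheses of Definition~\ref{fracundefintro} are literally the data produced by Theorem~\ref{relclassthmintro}(i).

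For the second half I would invoke the two definitions of the classes in question. On one side, the $k$-th twisted Chern class of the bundle gerbe module is $c_k^{l,a}(E)$ in the sense of Tomoda's splitting-principle construction, which by Theorem~\ref{relclassthmintro}(ii) equals $f^{l\ast}(c_k^{\q}) \in H^{2k}(M;\mathbb{Q})$ for each $0 \le k \le n$. On the other side, the $k$-th fractional Chern class of the fractional U-bundle $E$ was \emph{defined} (in the paragraph following Definition~\ref{fracundefintro}) to be precisely $f^{l\ast}(c_k^{\q})$, using the same descended map $f^l$ coming from the same diagram (\ref{fracudiagintro}). Hence the two classes are given by identical formulas and therefore coincide, which closes the argument.

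The only point that needs a word of care — and the place where I expect the "main obstacle," such as it is — is making sure the \emph{same} auxiliary choices underlie both sides, so that the map $f^l$ in "$c_k^{l,a}(E) = f^{l\ast}(c_k^{\q})$ as a twisted Chern class" is the same $f^l$ used to define the fractional Chern class. This amounts to observing that Theorem~\ref{relclassthmintro} provides a single diagram (\ref{fracudiagintro}) built from a single choice of trivialization $\xi$ (equivalently of $a$), that the fractional U-structure of Definition~\ref{fracundefintro} is by construction the one carried by exactly that diagram, and that $\phi_{l|n}$ is canonical and hence not an additional choice. I would also remark, for completeness, that the descended map $f^l$ is unique up to homotopy precisely because $\pi\colon Y\to M$ is locally split and $BU(n)_{\mathbb{Q}}$ is rational (so the relevant lifting/descent is controlled by the rational cohomology of $M$), which is implicit in the proof of Theorem~\ref{relclassthmintro}; with this in hand the independence of the fractional Chern classes from the presentation is inherited from the corresponding statement for twisted Chern classes. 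No genuinely new computation is required — the content is bookkeeping that the two notions share all their defining data — which is why the statement is labelled "immediate but important."
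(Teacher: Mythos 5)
Your proposal is correct and is exactly the paper's argument: the paper states this theorem with no separate proof (the $\qqed$ is in the statement) precisely because it is an immediate consequence of Theorem \ref{relclassthmintro} together with Definition \ref{fracundefintro} and the definition of the fractional Chern classes as $f^{l\ast}(c_k^{\q})$, which is the bookkeeping you carry out. One small caution: your closing remark that $f^l$ is unique up to homotopy is neither established in the paper nor needed, since both classes are read off from the same diagram (\ref{fracudiagintro}); it is best omitted.
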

In the sequel we will not distinguish these two terminologies, and usually adopt fractional Chern class to emphasize its non-integrality illustrated explicitly in Theorem \ref{fracchernforthm}.

%----------------------------------------------------------------------------------------------------------------------------------------------------------
\subsection{Higher fractional structures}
\label{sec: highfracintro}
In this subsection, we discuss higher fractional structures as depicted in Diagram (\ref{summdiag}).
We first introduce the notion of fractional SU- and U$\langle6\rangle$-structures in terms of the
 fractional Chern classes. 
Then by explicitly constructing a relative Whitehead tower of the fractional classifying space up to level $2$, we characterize the homotopy aspects of these higher fractional structures in Theorem \ref{fracsunthm} and \ref{fracu6nthm}. 

Parallel to the classical higher structures on ordinary vector bundles as recapped in Subsection \ref{sec: backintro}, it is natural to define higher fractional structures on fractional U-bundles as follows,
\begin{definition}\label{highfracstrudef}
Suppose $E$ is a given $(a,\frac{1}{l})$-fractional $U(n)$-bundle determined by Diagram (\ref{fracudiagintro}). 
\begin{itemize}
\item[(i)]~
$E$ is called an {\it $(a,\frac{1}{l})$-fractional $SU(n)$-bundle}, or simply a {\it fractional SU-bundle} if its first fractional Chern class $c_1^{l,a}(E)=0$;
\item[(ii)]~
$E$ is called an {\it $(a,\frac{1}{l})$-fractional $U\langle6\rangle(n)$-bundle}, or simply a {\it fractional U$\langle6\rangle$-bundle} if it is $(a,\frac{1}{l})$-fractional SU and its second fractional Chern class $c_2^{l,a}(E)=0$.
\end{itemize}
\end{definition}
\begin{remark}\label{hmremark}
Before proceeding to the homotopy aspects of these higher fractional structures, we would like to remark that the degree 4 component of the twisted Chern character of a bundle gerbe module $E$
$$\mathrm{Ch}_L(E)^{[4]}=\frac{1}{2}(c_1^{l,a}(E)^2- 2c_2^{l,a}(E))$$
is the obstruction to the modularity of graded Chern character of Witten gerbe modules 
\be \Theta_2(E)=\bigotimes_{v=1}^\infty
\Lambda_{-q^{v-{1\over2}}}(E)\otimes \bigotimes_{v=1}^\infty
\Lambda_{-q^{v-{1\over2}}}(\bar E),\ \ \ \Theta_3(E)=\bigotimes_{v=1}^\infty
\Lambda_{q^{v-{1\over2}}}(E)\otimes \bigotimes_{v=1}^\infty
\Lambda_{q^{v-{1\over2}}}(\bar E),\ee
which are elements in $K(Y)[[q^{1/2}]]$ and appear in the construction of {\em projective elliptic genera} \cite{HM1, HM2} (when $E$ is an ordinary vector bundle over $M$, see Liu \cite{Liu}). Let $\{2\pi \sqrt{-1} x_i\},\, 1\leq i\leq n,$ be the formal Chern roots of $E$ and $q=e^{2\pi  \sqrt{-1}\tau}$, $\tau \in \bH$, the upper half plane. Recall $a=c_1(\xi)$. In terms of the Jacobi theta functions, the graded Chern characters of the Witten gerbe modules $\Theta_2(E)$ and $\Theta_3(E)$ can be expressed as
$$ \GCh (\Theta_2(E))=\prod_{i=1}^n \theta_2\left(x_i-\frac{a}{l}, \tau\right),$$ 
$$\GCh (\Theta_3(E))=\prod_{i=1}^n \theta_3\left(x_i-\frac{a}{l}, \tau\right).$$
They are cohomology classes in $H^{even}(Y)[[q^{1/2}]]$ and descend to $H^{even}(M)[[q^{1/2}]]$. When $\mathrm{Ch}_L(E)^{[4]}=0$, the normalized 
graded Chern characters $\frac{\GCh (\Theta_2(E))}{\theta_2^n(0, \tau)}, \frac{\GCh (\Theta_3(E))}{\theta_3^n(0, \tau)}$, which are $q$-series with coefficients in rational cohomology classes, have the property that the degree $2p$ component of them are modular forms of weight $p$ over certain index 2 subgroups of the modular group $SL(2, \Z)$. 
\end{remark}

To understand the higher fractional structures in Definition \ref{highfracstrudef} via classifying spaces, we will construct a relative Whitehead tower up to level $2$ for the fractional classifying space
$$BU(1)\times BU(n) \stackrel{\phi_{l|n}}{\longrightarrow} BU(n)_\mathbb{Q}.$$ 

By Theorem \ref{fracchernforthm}, we see that 
$$\phi_{l|n}^\ast(c^{\q}_1)=c_1-sg.$$ Let us first kill $c_1-sg$. Denote $s=\frac{n}{l}$. The $s$-th covering of $U(n)$ is a compact Lie group denoted by $U(n)_l$.
There is a group extension ((\ref{u1unexteq}))
\[\label{u1unexteqintro}
\{1\}\stackrel{}{\longrightarrow}U(n)_l \stackrel{i_{2l}}{\longrightarrow} U(1)\times U(n) \stackrel{}{\longrightarrow} U(1)\stackrel{}{\longrightarrow}\{1\},
\]
determined by the class $c_1-sg\in H^2(BU(1)\times BU(n);\mathbb{Z})$ ({\em here and later by writing numberings like ((\ref{u1unexteq})) and etc, we are indicating the places of origins for the reader's convenience}). The map $i_{2l}=(d_l, \rho_s)$ is explicitly defined in (\ref{unextdiag}).
As graded ring ((\ref{hbunleq}))
\begin{equation}\label{hbunleqintro}
H^\ast(BU(n)_l;\mathbb{Z})\cong \mathbb{Z}[\overline{c}_1, c_2,\ldots, c_n],
\end{equation}
such that ((\ref{fracsuchernreleq}))
\begin{equation}\label{bi2lclassintro}
Bi_{2l}^\ast(g)=\overline{c}_1, \ \ ~ Bi_{2l}^\ast(c_1)=s\overline{c}_1, \ \ ~ Bi_{2l}^\ast(c_i)=c_i, ~{\rm for}~{\rm each}~i\geq 2.
\end{equation}
For convenience denote $c_1:=s\overline{c}_1$ for later use. 

Let 
\begin{equation}\label{whtoweruqeq}
\cdots \stackrel{}{\longrightarrow}  BU\langle 6 \rangle(n)_{\mathbb{Q}}  \stackrel{Bi_{3\mathbb{Q}}}{\longrightarrow} 
BSU(n)_{\mathbb{Q}} \stackrel{Bi_{2\mathbb{Q}}}{\longrightarrow} 
BU(n)_{\mathbb{Q}}
\end{equation}
be the rationalization of the Whitehead tower (\ref{whtowerueq}). It follows that there is a homotopy pullback diagram ((\ref{unifracsudiag}))
\begin{gather*}
\begin{aligned}
\xymatrix{
BU(n)_{l\mathbb{Q}} \ar[r]^<<<<<<{Bi_{2l\mathbb{Q}}} \ar[d]^{\widetilde{\phi}_2}&
BU(1)_{\mathbb{Q}}\times BU(n)_{\mathbb{Q}} \ar[d]^{\phi_{l|n\mathbb{Q}}}\\
BSU(n)_{\mathbb{Q}} \ar[r]^{Bi_{2\mathbb{Q}}} &
BU(n)_{\mathbb{Q}},  
}
\end{aligned}
\label{unifracsudiagQ}
\end{gather*}
where $\phi_{l|n\mathbb{Q}}$ is the rationalization of $\phi_{l|n}$, and $\widetilde{\phi}_2$ is defined by the homotopy pullback. Define a map $\phi_2$ as the composition
\begin{equation}\label{phi2defintro}
\phi_2: BU(n)_l\stackrel{r}{\longrightarrow}BU(n)_{l\mathbb{Q}} \stackrel{\widetilde{\phi}_2}{\longrightarrow} BSU(n)_{\mathbb{Q}}, 
\end{equation}
where $r$ is the rationalization. The upshot is that the map $\phi_2$ is viewed as the level $1$ lifting of the fractional classifying space $\phi_{l|n}$.

To move up one level further, we need to kill the class ((\ref{phi2c2eq}))
\[
\phi_2^\ast(c^{\q}_2)=c_2-\frac{s(n-1)}{2l}\overline{c}_1^2.
\]
Construct the topological group $U\langle 6 \rangle (n)_l$ defined by a group extension ((\ref{u6nlexteq}); compare to its integral counterpart (\ref{u6exteqintro}))
\[\label{u6nlexteqintro}
\{1\}\stackrel{}{\longrightarrow} K(\mathbb{Q},2) \stackrel{}{\longrightarrow} U\langle 6 \rangle (n)_l\stackrel{i_{3l}}{\longrightarrow}  U(n)_l \stackrel{}{\longrightarrow} \{1\}
\]
corresponding to the rational class $c_2-\frac{s(n-1)}{2l}\overline{c}_1^2\in H^4(BU(n)_l;\mathbb{Q})$.
It follows that the homotopy pullback of $\phi_2$ along the lifting $Bi_{3\mathbb{Q}}$ in (\ref{whtoweruqeq})
defines a map
\[
\phi_3: BU\langle 6 \rangle(n)_l\to BU\langle 6 \rangle(n)_{\mathbb{Q}}.
\]
The homotopy pullback ((\ref{u6nldiag}))
\begin{gather*}
\begin{aligned}
\xymatrix{
U\langle 6 \rangle(n)_l \ar[d]^{}  \ar[r]^{i_{3l}}&
U(n)_l \ar[d]^{\Omega \phi_2} \\
U\langle 6 \rangle(n)_{\mathbb{Q}}  \ar[r]^{i_{3\mathbb{Q}}}&
SU(n)_{\mathbb{Q}},
}
\end{aligned}
\label{u6nldiagintro}
\end{gather*}
provides a more concrete construction of the topological group $U\langle 6 \rangle(n)_l$.
The upshot is that the map $\phi_3$ is viewed as the level $2$ lifting of the fractional classifying space $\phi_{l|n}$.

To summarize, we have constructed the relative Whitehead tower of the classifying space of the relative structure group ($U(1)\times U(n)$, $U(n)_{\mathbb{Q}}$)
\begin{gather}
\begin{aligned}
\xymatrix{
BU\langle 6 \rangle(n)_l \ar[d]^{\phi_3}  \ar[r]^{Bi_{3l}}&
BU(n)_l \ar[r]^<<<<<<{Bi_{2l}} \ar[d]^{\phi_2}&
BU(1)\times BU(n) \ar[d]^{\phi_{l|n}}\\
BU\langle 6 \rangle(n)_{\mathbb{Q}}  \ar[r]^{Bi_{3\mathbb{Q}}}&
BSU(n)_{\mathbb{Q}} \ar[r]^{Bi_{2\mathbb{Q}}} &
BU(n)_{\mathbb{Q}}.
}
\end{aligned}
\label{relwhtowerueq}
\end{gather}
The effect of the right square in Diagram (\ref{relwhtowerueq}) on characteristic classes is to kill the relative class $(c_1-sg, c^{\q}_1)\in (H^{2}(BU(1)\times BU(n)), H^2(BU(n)_{\mathbb{Q}}))$. 
The effect of the left square in Diagram (\ref{relwhtowerueq}) on characteristic classes is to kill the relative class $(c_2-\frac{s(n-1)}{2l}\overline{c}_1^2, c^{\q}_2)\in (H^{2}(BU(n)_l;\mathbb{Q}), H^2(BSU(n)_{\mathbb{Q}}))$.

Let us turn to fractional bundles. The following theorem characterizes the fractional SU-structures from the perspective of classifying spaces, and will be proved in Subsection \ref{sec: fracsu}. Recall by (\ref{hbunleqintro}) we have $\bar{c}_1$, and $c_i$ ($1\leq i\leq n$) as the classes of $BU(n)_l$ satisfying (\ref{bi2lclassintro}).
\begin{theorem}\label{fracsunthm}
Let $E$ be an $(a,\frac{1}{l})$-fractional $U(n)$-bundle determined by Diagram (\ref{fracudiagintro}) and $l>1$. 
Then $E$ admits an $(a,\frac{1}{l})$-fractional $SU(n)$-structure if and only if there exists a pair of maps $(f^a_2, f_2^l)$ such that the following diagram commutes up to homotopy
\begin{gather}
\begin{aligned}
\xymatrixcolsep{1.5pc}
\xymatrixrowsep{1.0pc}
\xymatrix{
& BU(n)_l \ar[dr]^{Bi_{2l}} \ar[dd]^>>>>>>{\phi_2} &\\
Y\ar@{.>}[ur]^<<<<<<{f^a_2} \ar[dd]^{\pi} \ar[rr]^>>>>>>>>{f^a}|!{[r];[r]}\hole && BU(1)\times BU(n) \ar[dd]^{\phi_{l|n}} \\
& BSU(n)_{\mathbb{Q}}\ar[dr]^{Bi_{2\mathbb{Q}}} \\
M\ar@{.>}[ur]^<<<<<<{f^l_2} \ar[rr]^{f^l}& & BU(n)_{\mathbb{Q}}.
}
\end{aligned}
\label{fracsunliftdiag}
\end{gather}
If such relative lift exists, then 
\begin{equation}\label{fracsunchernforeq}
f_2^{a\ast}(\overline{c}_1)=a=\frac{1}{s}c_1(E), \ \ ~ f_2^{a\ast}(c_i)=c_i(E), \ \ ~ f_2^{l\ast}(c_i^{\q})=c_i^{l,a}(E),~{\rm for}~{\rm each}~i\geq 2.
\end{equation}
Moreover, the fractional SU-structures on $E$ are in one-to-one correspondence with the elements of $H^1(M;\mathbb{Z})$.
\end{theorem}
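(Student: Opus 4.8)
The plan is to treat Diagram (\ref{fracsunliftdiag}) as a relative homotopy lifting problem against the right-hand square of the relative Whitehead tower (\ref{relwhtowerueq}) and to collapse it to a single obstruction. First I would form the homotopy pullback $W$ of $\phi_{l|n}\colon BU(1)\times BU(n)\to BU(n)_\Q$ and $Bi_{2\Q}\colon BSU(n)_\Q\to BU(n)_\Q$. Since $\phi_{l|n}^\ast(c_1^{\q})=c_1-sg$ by Theorem \ref{fracchernforthm}, the right square of (\ref{relwhtowerueq}) commutes up to homotopy, so it induces a canonical map $\psi\colon BU(n)_l\to W$. A homotopy group computation — using that $BU(n)_l$ is simply connected with $H^\ast(BU(n)_l;\Z)\cong\Z[\overline{c}_1,c_2,\ldots,c_n]$ by (\ref{hbunleqintro}), and that on $\pi_2$ the map $\phi_{l|n}$ carries $\Z^2=\pi_2(BU(1)\times BU(n))$ onto the integral lattice $\Z\subset\Q=\pi_2(BU(n)_\Q)$ by $(a,b)\mapsto b-sa$ — shows $\pi_1(W)\cong\Q/\Z$, $\pi_{\ge 2}(W)\cong\pi_{\ge 2}(BU(n)_l)$, and that $\psi$ realizes $BU(n)_l$ as the universal covering of $W$ with deck group $\Q/\Z$. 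Under this identification, ``$E$ is fractional $SU$'' means exactly $c_1^{l,a}(E)=f^{l\ast}(c_1^{\q})=0$ in $H^2(M;\Q)$.

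Next I would unwind what a relative lift is. A pair $(f_2^a,f_2^l)$ making (\ref{fracsunliftdiag}) commute is equivalent to the data of a lift $f_2^l\colon M\to BSU(n)_\Q$ of $f^l$ along $Bi_{2\Q}$ — which exists iff its sole obstruction $f^{l\ast}(c_1^{\q})=c_1^{l,a}(E)$ vanishes — together with a lift along $\psi$ of the map $g\colon Y\to W$ assembled from $f^a$, $f_2^l\circ\pi$ and the homotopies. The ``only if'' direction is then immediate: a relative lift provides $f_2^l$, and since $Bi_{2\Q}^\ast(c_1^{\q})=0$ this forces $c_1^{l,a}(E)=f_2^{l\ast}(Bi_{2\Q}^\ast c_1^{\q})=0$. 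For ``if'', assume $c_1^{l,a}(E)=0$, fix $f_2^l$, and form $g$. As $\psi$ is the universal covering of $W$, the remaining lift exists iff the class $g^\ast(\mathrm{id})\in H^1(Y;\Q/\Z)=\mathrm{Hom}(\pi_1 Y,\Q/\Z)$ vanishes, where $\mathrm{id}\in\mathrm{Hom}(\pi_1 W,\Q/\Z)=H^1(W;\Q/\Z)$. The crux of the proof is to show this can be arranged exactly when $c_1^{l,a}(E)=0$: one chases $g^\ast(\mathrm{id})$ through the Bockstein $H^1(-;\Q/\Z)\to H^2(-;\Z)$, identifies its image with $f^{a\ast}(c_1-sg)=c_1(E)-sa=\pi^\ast c$ for an integral class $c$ on $M$ which, by Theorem \ref{fracchernforthm}, is an integral representative of $c_1^{l,a}(E)$, and then uses the freedom to re-choose $f_2^l$ (by $H^1(M;\Q)$) and the homotopy of (\ref{fracudiagintro}) (by $H^1(Y;\Q)$), together with the connectivity properties of the locally split map $\pi$, to annihilate $g^\ast(\mathrm{id})$. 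I expect this reconciliation of the integral datum $c_1(E)-sa$ on $Y$ with the rational invariant $c_1^{l,a}(E)$ on $M$, and the attendant control of torsion, to be the genuine obstacle; it is exactly for this that the tower (\ref{relwhtowerueq}) is built the way it is.

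Once a relative lift $(f_2^a,f_2^l)$ is in hand, the formulae (\ref{fracsunchernforeq}) are a direct diagram chase through (\ref{fracsunliftdiag}) using (\ref{bi2lclassintro}): from $Bi_{2l}\circ f_2^a\simeq f^a$ one reads $f_2^{a\ast}(\overline{c}_1)=f^{a\ast}(g)=a$, $s\,f_2^{a\ast}(\overline{c}_1)=f^{a\ast}(c_1)=c_1(E)$ and $f_2^{a\ast}(c_i)=f^{a\ast}(c_i)=c_i(E)$ for $i\ge 2$, while from $Bi_{2\Q}\circ f_2^l\simeq f^l$ together with $Bi_{2\Q}^\ast(c_i^{\q})=c_i^{\q}$ ($i\ge 2$) one gets $f_2^{l\ast}(c_i^{\q})=f^{l\ast}(c_i^{\q})=c_i^{l,a}(E)$. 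Finally, for the count I would observe that, over the fixed data of (\ref{fracudiagintro}), any two relative lifts differ by a change of $f_2^a$ by some $\delta\in H^1(Y;\Z)$ (a change along the $K(\Z,1)$-fibre of $Bi_{2l}$) and of $f_2^l$ by some $\epsilon\in H^1(M;\Q)$ (along the $K(\Q,1)$-fibre of $Bi_{2\Q}$), subject to the compatibility that the rationalization of $\delta$ equal $\pi^\ast\epsilon$, which comes from the induced map on fibres of the right square of (\ref{relwhtowerueq}) being rationalization. Injectivity of $\pi^\ast$ on $H^1(-;\Q)$ and torsion-freeness of the cokernel of $\pi^\ast$ on $H^1(-;\Z)$ (both consequences of the connectivity of $\pi$) then show $\epsilon$ must be the rationalization of a unique integral class on $M$, so that $(\delta,\epsilon)\mapsto\epsilon$ identifies the set of such differences with $H^1(M;\Z)$; hence the fractional $SU$-structures on $E$ form a torsor over $H^1(M;\Z)$, as claimed.
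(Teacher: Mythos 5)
Your reformulation of the relative lift as a single lifting problem for $g\colon Y\to W$ through the universal cover $\psi\colon BU(n)_l\to W$ (with $\pi_1(W)\cong\Q/\Z$) is a legitimate setup, and your ``only if'' direction and the derivation of (\ref{fracsunchernforeq}) from (\ref{bi2lclassintro}) match what the paper does. But at the decisive point of the ``if'' direction you only announce a plan: you say you \emph{expect} the reconciliation of the integral datum $c_1(E)-sa$ on $Y$ with the rational invariant $c_1^{l,a}(E)$ on $M$, ``and the attendant control of torsion,'' to be the genuine obstacle, and you never show that the monodromy obstruction $g^\ast(\mathrm{id})\in H^1(Y;\Q/\Z)$ can in fact be annihilated by re-choosing $f_2^l$ and the homotopies. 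That is exactly the content of the existence statement, so the ``if'' direction is not proved. The paper never meets this $\Q/\Z$-valued obstruction at all: it lifts $f^a$ and $f^l$ separately through the two principal fibrations in the right square of (\ref{relwhtowerueq}) --- the single obstructions being $f^{a\ast}(c_1-sg)=c_1(E)-sa$ (killed via $B\mu_s$ in Diagram (\ref{unifracsudiag})) and $f^{l\ast}(c_1^{\q})=c_1^{l,a}(E)$ --- and then gets the compatibility of the two lifts (your coherence/monodromy issue) for free, because $BU(n)_{\Q}\simeq BSU(n)_{\Q}\times BU(1)_{\Q}$ gives a left homotopy inverse of $Bi_{2\Q}$, so $Bi_{2\Q}\circ\phi_2\circ f_2^a\simeq Bi_{2\Q}\circ f_2^l\circ\pi$ already forces $\phi_2\circ f_2^a\simeq f_2^l\circ\pi$. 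Some such splitting input is precisely what your sketch is missing, and without it (or a substitute) your argument does not close.

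The counting also has a gap. Your parametrization of differences by pairs $(\delta,\epsilon)\in H^1(Y;\Z)\times H^1(M;\Q)$ with $\delta_{\Q}=\pi^\ast\epsilon$ is plausible, but the identification of this set with $H^1(M;\Z)$ rests on two unproved assertions --- injectivity of $\pi^\ast$ on $H^1(-;\Q)$ and torsion-freeness of the cokernel of $\pi^\ast$ on $H^1(-;\Z)$ --- which you attribute to ``the connectivity properties of the locally split map $\pi$.'' No such hypothesis is available: in Definition \ref{fracundefintro} (and in Theorem \ref{fracsunthm}) $\pi$ is just a (locally split) map with $Y$ and $M$ connected, and nothing constrains its fibers, so these properties cannot simply be asserted; if $\pi^\ast$ has kernel on $H^1(-;\Q)$ your correspondence $(\delta,\epsilon)\mapsto\epsilon$ would overcount. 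The paper's count avoids any hypothesis on $\pi$: it applies the dual Blakers--Massey theorem to the fractional principal bundle (\ref{fracuprindiag}) to get the exact sequence $0\to H^1(M)\to H^1(P_{U_{\Q}}(f^l))\to H^2(BU(n)_{\Q})\to H^2(M)$ in Diagram (\ref{H12fracuprindiag}), and identifies fractional SU-structures with pairs $(P,\Phi^\ast(P))$ with $\delta\circ j^\ast(P)=c_1^{\q}$, a set that is visibly a torsor over $H^1(M;\Z)$. To repair your proposal you would either have to prove your two assertions under the actual hypotheses or replace that step by an argument of this fibration-theoretic kind.
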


Motivated by Theorem \ref{fracsunthm}, we call the map $BU(n)_l \stackrel{\phi_{2}}{\longrightarrow} BSU(n)_\mathbb{Q}$ the {\it fractional classifying space} of fractional SU-structure, and say that the relative structure group ($U(1)\times U(n)$, $U(n)_{\mathbb{Q}}$) of $E$ can be lifted to the pair of groups ($U(n)_l$, $SU(n)_{\mathbb{Q}}$). We call the pair of the liftings ($f_2^a$, $f_2^l$) in Diagram (\ref{fracsunliftdiag}) the {\it classifying map} of the fractional SU-structure. For a fractional U-bundle $E$ defined by Diagram (\ref{fracudiagintro}) (Lemma \ref{obfracsulemma})
\begin{equation}\label{fracsuobintro}
(c_1(E)-sa, c_1^{l,a}(E))=(f^{a\ast}, f^{l\ast})(c_1-sg, c^{\q}_1).
\end{equation} 

The following theorem characterizes the fractional U$\langle 6\rangle$-structures via classifying spaces, and will be proved in Subsection \ref{sec: fracu6}. To trace characteristic classes in the following diagram (\ref{fracu6nliftdiag}), denote $\bar{c}_1:=Bi_{3l}^\ast(\overline{c}_1)$ and $c_i:=Bi_{3l}^\ast(c_i)$ for each $1\leq i\leq n$ as classes of $BU\langle 6\rangle (n)_l$.

\begin{theorem}\label{fracu6nthm}
Let $E$ be an $(a,\frac{1}{l})$-fractional $SU(n)$-bundle determined by Diagram (\ref{fracsunliftdiag}) and $l>1$. 
Then $E$ admits an $(a,\frac{1}{l})$-fractional $U\langle 6\rangle (n)$-structure if and only if there exists a pair of maps $(f^a_3, f_3^l)$ such that the following diagram commutes up to homotopy
\begin{gather}
\begin{aligned}
\xymatrixcolsep{1.5pc}
\xymatrixrowsep{1.0pc}
\xymatrix{
& BU\langle 6\rangle (n)_l \ar[dr]^{Bi_{3l}} \ar[dd]^>>>>>>{\phi_3} &\\
Y\ar@{.>}[ur]^<<<<<<{f^a_3} \ar[dd]^{\pi} \ar[rr]^>>>>>>>>>>{f^a_2}|!{[r];[r]}\hole && BU(n)_l\ar[dd]^{\phi_2} \\
& BU\langle 6\rangle (n)_{\mathbb{Q}}\ar[dr]^{Bi_{3\mathbb{Q}}} \\
M\ar@{.>}[ur]^<<<<<<{f^l_3} \ar[rr]^{f^l_2}& & BSU(n)_{\mathbb{Q}}.
}
\end{aligned}
\label{fracu6nliftdiag}
\end{gather}
If such relative lift exists then 
\begin{equation}\label{fracu6nchernforeq}
\begin{split}
&f_3^{a\ast}(\bar{c}_1)=a=\frac{1}{s}c_1(E), \ \ ~f_3^{a\ast}(c_2)=c_2(E)=\frac{s(n-1)}{2l}a^2, \ \ ~ \\
&f_3^{a\ast}(c_i)=c_i(E), \ \ ~ f_3^{l\ast}(c_i^{\q})=c_i^{l,a}(E),~{\rm for}~{\rm each}~i\geq 3.
\end{split}
\end{equation}
Moreover, the fractional $U\langle 6\rangle$-structures on $E$ are in one-to-one correspondence with the elements of $H^3(M;\mathbb{Z})$.
\end{theorem}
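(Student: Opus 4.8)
The plan is to prove Theorem \ref{fracu6nthm} by the same three-stage strategy used for Theorem \ref{fracsunthm}, now applied one level up the relative Whitehead tower (\ref{relwhtowerueq}). First I would establish the \emph{obstruction identity}: for a fractional $SU(n)$-bundle $E$ given by Diagram (\ref{fracsunliftdiag}), the relative class that the left square of (\ref{relwhtowerueq}) kills pulls back to $(c_2(E)-\frac{s(n-1)}{2l}a^2,\ c_2^{l,a}(E))$ in the relative cohomology $(H^4(Y;\mathbb{Q}),H^4(M;\mathbb{Q}))$. This is the analogue of (\ref{fracsuobintro}); it follows from $f_2^{l\ast}(c_2^{\q})=c_2^{l,a}(E)$ (part of (\ref{fracsunchernforeq})), from $\phi_2^\ast(c_2^{\q})=c_2-\frac{s(n-1)}{2l}\overline{c}_1^2$ quoted before (\ref{u6nlexteqintro}), and from $f_2^{a\ast}(\overline{c}_1)=a$, $f_2^{a\ast}(c_i)=c_i(E)$. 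The point of this identity is that it makes the vanishing condition defining the fractional $U\langle6\rangle$-structure in Definition \ref{highfracstrudef}(ii) — namely $c_2^{l,a}(E)=0$ together with the already-assumed $c_1^{l,a}(E)=0$ — literally the statement that this relative obstruction class is zero, hence (on the base $M$) that $f_2^l$ lifts through $Bi_{3\mathbb{Q}}$, while simultaneously on the total space $Y$ the integral combination $c_2(E)-\frac{s(n-1)}{2l}a^2$ is killed so that $f_2^a$ lifts through $Bi_{3l}$.

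Next I would prove the \emph{lifting equivalence}. The left square of (\ref{relwhtowerueq}) is a homotopy pullback of fibrations (this is how $\phi_3$ and $U\langle6\rangle(n)_l$ were constructed, via the pullback square after (\ref{u6nlexteqintro}) and the group-level pullback (\ref{u6nldiagintro})), with fibers $K(\mathbb{Z},2)$ on the top row and $K(\mathbb{Q},2)$ on the bottom, matching under rationalization. So a relative lift $(f_3^a,f_3^l)$ in Diagram (\ref{fracu6nliftdiag}) exists if and only if the composite obstruction — which by naturality of the pullback is exactly the relative class computed in the previous step — vanishes. Concretely: a lift of $f_2^l$ through $Bi_{3\mathbb{Q}}$ exists iff $f_2^{l\ast}(c_2^{\q})=c_2^{l,a}(E)=0$; given such a lift, the compatibility square forces the corresponding lift of $f_2^a$ through $Bi_{3l}$ to exist because the obstruction to the latter, living in $H^4(Y;\mathbb{Z})$, maps to $0\in H^4(M;\mathbb{Q})$ under $\pi^\ast$ and is already the class $c_2(E)-\frac{s(n-1)}{2l}a^2$ which we must check is itself zero — and here I would invoke that $E$ is a genuine fractional $SU$-bundle so that the relative structure is rigid enough (equivalently, use (\ref{fracsunchernforeq}) plus the pullback property) to conclude the total-space obstruction vanishes exactly when the base one does. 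The characteristic-class formulae (\ref{fracu6nchernforeq}) then fall out by tracing $\overline{c}_1,c_i,c_i^{\q}$ through the maps $Bi_{3l}$, $Bi_{3\mathbb{Q}}$ using (\ref{bi2lclassintro}) and the convention that pullbacks of universal classes keep their names; in particular $f_3^{a\ast}(c_2)=c_2(E)$ combined with the pullback relation gives $c_2(E)=\frac{s(n-1)}{2l}a^2$.

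Finally, for the \emph{counting} statement I would run the standard obstruction-theory argument for lifts against a principal fibration with fiber an Eilenberg--MacLane space, in its relative form. The set of homotopy classes of relative lifts $(f_3^a,f_3^l)$, once nonempty, is a torsor over the relative cohomology group fitting in the Mayer--Vietoris-type sequence for the pullback square; since the top fiber is $K(\mathbb{Z},2)$ and the bottom fiber is $K(\mathbb{Q},2)$ with the map between them the rationalization, the relative group is $H^2$ of the relative classifying space with appropriate coefficients, and pulling back along $(\pi, \mathrm{id})$ together with the fact that $\pi$ is a (cohomology) equivalence in the relevant sense reduces this to $H^3(M;\mathbb{Z})$ — matching the integral Whitehead-tower count $H^3(M;\mathbb{Z})$ for ordinary $U\langle6\rangle$-structures recalled in Subsection \ref{sec: backintro}. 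I expect the main obstacle to be the second step: carefully reconciling the \emph{integral} lifting datum on $Y$ (through $Bi_{3l}$, fiber $K(\mathbb{Z},2)$) with the \emph{rational} lifting datum on $M$ (through $Bi_{3\mathbb{Q}}$, fiber $K(\mathbb{Q},2)$) inside a single homotopy-coherent square, i.e.\ making precise that the relative obstruction genuinely lives in a group that sees both, and that no extra torsion phenomena appear when passing between $Y$ and $M$ — this is where the flatness/order-$l$ hypothesis and the precise construction of $\phi_3$ via the pullback (\ref{u6nldiagintro}) must be used rather than merely quoted.
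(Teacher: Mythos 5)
Your first two stages do essentially track the paper's argument: the obstruction identity is exactly Lemma \ref{obfracu6lemma}, and the lift $f_3^a$ is obtained from $f_3^l$ via the universal property of the homotopy pullback square (\ref{u6nldiag}) (delooped, this is the upper square of (\ref{fracu6nliftdiag})), which simultaneously gives $Bi_{3l}\circ f_3^a\simeq f_2^a$ and $\phi_3\circ f_3^a\simeq f_3^l\circ\pi$. But your description of that square is wrong in a way that matters: $U\langle 6\rangle(n)_l$ is defined by the extension (\ref{u6nlexteq}) with kernel $K(\mathbb{Q},2)$, because the class being killed, $c_2-\frac{s(n-1)}{2l}\overline{c}_1^2$, lies in $H^4(BU(n)_l;\mathbb{Q})$ and is not integral; equivalently, since the square is a homotopy pullback, $Bi_{3l}$ and $Bi_{3\mathbb{Q}}$ have the \emph{same} homotopy fiber $K(\mathbb{Q},3)$. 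So there is no ``integral lifting datum on $Y$ versus rational lifting datum on $M$'' to reconcile, no torsion phenomenon, and the ``main obstacle'' you single out is an artifact of this misreading. (Minor further slip: the obstruction to lifting $f_2^a$ lives in $H^4(Y;\mathbb{Q})$ and equals $\pi^\ast(c_2^{l,a}(E))$ — $\pi^\ast$ goes from $M$ to $Y$, not the reverse — though the paper never needs this as a separate step, since the pullback property alone produces $f_3^a$.)

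The genuine gap is the counting statement. With rational fibers, vertical homotopy classes of lifts of $f_2^l$ through $Bi_{3\mathbb{Q}}$ are governed by $H^3(M;\mathbb{Q})$, so a bare ``torsor over the relative cohomology group of the pullback square'' cannot by itself produce the integral answer $H^3(M;\mathbb{Z})$; your route to it rests on the false $K(\mathbb{Z},2)$-fiber claim and on the assertion that ``$\pi$ is a (cohomology) equivalence in the relevant sense,'' which is unjustified and false in general — $\pi$ is only a locally split map (e.g.\ a nontrivial bundle projection), and the paper's proof uses no such property. What the paper actually does is count structures as pairs $(P,\Phi_2^\ast(P))\in H^3(P_{SU_{\mathbb{Q}}}(f_2^l))\times H^3(P_{U_l}(f_2^a))$ of integral classes on the total spaces of the principal bundles (\ref{fracsuprindiag}) satisfying $\delta\circ j_2^\ast(P)=c^{\q}_2$ — the analogue of counting spin structures by classes on the frame bundle — and then invokes the dual Blakers--Massey theorem to get the exact row of Diagram (\ref{H34fracuprindiag}), $0\to H^3(M)\to H^3(P_{SU_{\mathbb{Q}}}(f_2^l))\to H^4(BSU(n)_{\mathbb{Q}})\to H^4(M)$, from which such pairs form a torsor over $H^3(M;\mathbb{Z})$. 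Your proposal is missing both this precise formulation of what is being counted and the exact-sequence input that makes the integral count come out, so as written the last part of the theorem would not be established.
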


Motivated by Theorem \ref{fracu6nthm}, we call the map $BU\langle 6\rangle (n)_l \stackrel{\phi_{3}}{\longrightarrow} BU\langle 6\rangle (n)_{\mathbb{Q}}$ the {\it fractional classifying space} of fractional U$\langle6\rangle$-structure, and say that the relative structure group ($U(n)_l$, $SU(n)_{\mathbb{Q}}$) of $E$ can be lifted to the pair of groups ($U\langle 6\rangle (n)_l$, $U\langle 6\rangle (n)_{\mathbb{Q}}$). We call the pair of the liftings ($f_3^a$, $f_3^l$) in Diagram (\ref{fracu6nliftdiag}) the {\it classifying map} of the fractional U$\langle6\rangle$-structure.
For a fractional SU-bundle $E$ defined by Diagram (\ref{fracsunliftdiag}) (Lemma \ref{obfracu6lemma})
\begin{equation}\label{fracu6obintro}
(c_2(E)-\frac{s(n-1)}{2l}a^2, c_2^{l,a}(E))=(f_2^{a\ast}, f_2^{l\ast})(c_2-\frac{s(n-1)}{2l}c_1^2, c^{\q}_2).
 \end{equation}

%----------------------------------------------------------------------------------------------------------------------------------------------------------
\subsection{Higher fractional loop structures} 
\label{sec: highloopfracuintro}
As in the untwisted cases, the fractional structures can be also understood from the perspective of free loop spaces as depicted in Diagram (\ref{summdiag}). Here we only study higher fractional loop structures from homotopy theoretical point of view. Geometric considerations on additional fusive structures will be explored in a separate forthcoming work.

More precisely, in this subsection we first introduce some necessary loop classes by the transgression procedure. Then we define the notion of fractional loop U- and SU-structures in terms of transgressed fractional characteristic classes. Lastly, by introducing a relative Whitehead tower of the looped fractional classifying space up to level $2$, we characterize higher fractional loop structures in Theorem \ref{fracloopunthm} and \ref{fracloopsunthm}.

To study looped characteristic classes, recall there is the canonical fibration
\begin{equation}\label{generalfreefibintro}
\Omega X\stackrel{}{\rightarrow} LX\stackrel{p}{\rightarrow} X,
\end{equation}
where $p$ is the evaluation map at the based point. It determines the transgression or the free suspension ((\ref{freesuspen}))
\[
\nu: H^\ast(X)\stackrel{}{\longrightarrow} H^{\ast-1}(LX),
\]
which is a derivation and is functorial in the space $X$.
It can be showed that ((\ref{h3bluneq}))
\[\label{h3bluneqintro}
H^{\leq 4}(BLU(n))\cong \mathbb{Z}^{\leq 4}[z_1,c_1,z_2,c_2]
\]
such that $c_i:=p^\ast(c_i)$ ($i=1$, $2$) correspond to the first two universal Chern classes, and (Lemma \ref{tranlemma})
\begin{equation}\label{traneqintro}
\nu(c_1)=z_1, \ \ \nu(c_2)=z_2+z_1c_1, \ \  \nu(c_1^2)=2z_1c_1.
\end{equation}
Similarly, there are universal loop classes given by
\[
H^{\leq 4}(BLU(n)_{\mathbb{Q}})\cong \mathbb{Z}^{\leq 4}[z_1^{\q},c_1^{\q},z_2^{\q},c_2^{\q}], ~ \ \  ~  H^\ast(BLU(1))\cong \mathbb{Z}[g]\{h\}~ {\rm such}~{\rm that}~\nu(g)=h.
\]
Let $E$ be a fractional U-bundle determined by Diagram (\ref{fracudiagintro}).
By naturality the above universal loop classes give the corresponding loop classes ($i=1$, $2$; cf. \hyperref[AppendixA]{table} and Subsection \ref{sec: freesus})
\[
\begin{split}
&c_i(LE):=p^\ast(c_i(E))\in H^{2i}(LY;\mathbb{Z}), ~  \  \ \ \ \  \ \ ~ z_i(LE)\in H^{2i-1}(LY;\mathbb{Z}),  \\
&c_i^{l,a}(LE):=p^\ast(c_i^{l,a}(E))\in H^{2i}(LM;\mathbb{Q}), ~ \ \  ~ z_i^{l,a}(LE)\in H^{2i-1}(LM;\mathbb{Q}),\\
&a:=p^\ast(a)\in H^{2}(LY;\mathbb{Z}),~ \ \  ~\mathfrak{a}\in H^{1}(LY;\mathbb{Z})~ {\rm such}~{\rm that}~\nu(a)=\mathfrak{a}.
\end{split}
\]

\begin{definition}\label{highfracloopstrudef}
Suppose $E$ is a given $(a,\frac{1}{l})$-fractional $U(n)$-bundle determined by Diagram (\ref{fracudiagintro}). 
\begin{itemize}
\item[(i)]~
$E$ has an {\it $(a,\frac{1}{l})$-fractional loop $U(n)$-structure}, or simply a {\it fractional loop U-structure} if its first transgressed fractional class $z_1^{l,a}(LE)=0$;
\item[(ii)]~
$E$ has an {\it $(a,\frac{1}{l})$-fractional loop $SU(n)$-structure}, or simply a {\it fractional loop SU-structure} if it is $(a,\frac{1}{l})$-fractional SU and its second transgressed fractional class $z_2^{l,a}(LE)=0$.
\end{itemize}
\end{definition}

To understand the higher fractional loop structures in Definition \ref{highfracloopstrudef} via classifying spaces, we construct a relative Whitehead tower up to level $2$ for the looped fractional classifying space
$$BLU(1)\times BLU(n) \stackrel{L\phi_{l|n}}{\longrightarrow} BLU(n)_\mathbb{Q}.$$

By Theorem \ref{fracchernforthm} and the naturality of the free suspension we see that 
\[
L\phi_{l|n}^\ast(z^{\q}_1)=z_1-sh. 
\]
Let us first kill $z_1-sh$. Recall $s=\frac{n}{l}$ and $l>1$. Consider the group $\overline{LU}(n)_l$ defined by a pullback diagram ((\ref{Lunpulldiag}))
\begin{gather*}
\begin{aligned}
\xymatrix{
\overline{LU}(n)_l \ar[r]^{} \ar[d]^{} &
\Omega U(1) \ar[d]^{\Omega\tau_s} \\
LU(n) \ar[r]^{\varepsilon} &
\Omega U(1),
}
\end{aligned}
\label{Lunpulldiagintro}
\end{gather*}
where $\Omega U(1)$ is a based loop group, $\varepsilon(A(t))=\frac{{\rm det}(A(t))}{{\rm det}(A(1))}$, and $\tau_s(z)=z^s$. 
If we view the components of $LU(n)$ are indexed by $\mathbb{Z}$ under the isomorphism $\pi_0(LU(n))\cong\mathbb{Z}$,
it is not hard to see that $\overline{LU}(n)_l$ is the subgroup of $LU(n)$ consisting of all the components indexed in $s\mathbb{Z}$.
There is a homotopy group extension ((\ref{lu1unexteq2}))
\[\label{lu1unexteq2intro}
\{1\}\stackrel{}{\longrightarrow} U(1)\times \overline{LU}(n)_l\stackrel{\iota_{2l}}{\longrightarrow} LU(1)\times LU(n)\stackrel{}{\longrightarrow} \Omega U(1)\stackrel{}{\longrightarrow}\{1\}
\]
determined by the class $z_1-sh\in H^1(BLU(1)\times BLU(n))$, and $\iota_{2l}=(\kappa_l, \psi_s)$ is explicitly defined in (\ref{u1lunextdiag}). 
For later use, denote 
\begin{equation}\label{barunlclassintro}
\overline{z}_1:=(B\iota_{2l})^\ast(h),  ~ \ \ ~ z_2:=(B\iota_{2l})^\ast(z_2), ~\ \ ~ c_i:=(B\iota_{2l})^\ast(c_i), ~\ ~ i=1, 2.
\end{equation}

Let
\begin{equation}\label{whtowerluqeq}
\cdots\stackrel{}{\longrightarrow} B\widehat{LSU}(n)_{\mathbb{Q}}\stackrel{B\iota_{3\mathbb{Q}}}{\longrightarrow} BLSU(n)_{\mathbb{Q}}\stackrel{B\widehat{Li_{2}}_\mathbb{Q}}{\longrightarrow}
BL_0U(n)_{\mathbb{Q}}\stackrel{B\iota_{2\mathbb{Q}}}{\longrightarrow} BLU(n)_{\mathbb{Q}}
\end{equation}
be the rationalization of the Whitehead tower (\ref{whtowerlueq}). 
Recall here $\iota_{2}: L_0U(n)\rightarrow LU(n)$ is the inclusion of the component corresponding to $0\in \mathbb{Z}\cong\pi_0(LU(n))$, and the loop map $Li_2: LSU(n)\rightarrow LU(n)$ ((\ref{whtowerueq})) factors through $L_0U(n)$ to define the map $\widehat{Li_2}: LSU(n)\rightarrow L_0U(n)$.
It follows that there is a homotopy pullback diagram ((\ref{lunifracsudiag}))
\begin{gather*}
\begin{aligned}
\xymatrix{
BU(1)_{\mathbb{Q}}\times B\overline{LU}(n)_{l\mathbb{Q}} \ar[r]^<<<<<{B\iota_{2l\mathbb{Q}}} \ar[d]^{\widetilde{\xi}_2}&
BLU(1)_{\mathbb{Q}}\times BLU(n)_{\mathbb{Q}}\ar[d]^{L\phi_{l|n\mathbb{Q}}}\\
BL_0U(n)_{\mathbb{Q}} \ar[r]^{B\iota_{2\mathbb{Q}}} &
BLU(n)_{\mathbb{Q}},
}
\end{aligned}
\label{lunifracsudiagQ}
\end{gather*}
where $\phi_{l|n\mathbb{Q}}$ is the rationalization of $\phi_{l|n}$, and $\widetilde{\xi}_2$ is defined by the homotopy pullback. Define a map $\xi_2$ as the composition
\begin{equation}\label{xi2defintro}
\xi_2: BU(1)\times B\overline{LU}(n)_{l}\stackrel{r}{\longrightarrow}BU(1)_{\mathbb{Q}}\times B\overline{LU}(n)_{l\mathbb{Q}} \stackrel{\widetilde{\phi}_2}{\longrightarrow} BL_0U(n)_{\mathbb{Q}}, 
\end{equation}
where $r$ is the rationalization. The upshot is that the map $\xi_2$ is viewed as the level $1$ lifting of the looped fractional classifying space $L\phi_{l|n}$.

In Definition \ref{highfracloopstrudef}, we define fractional loop SU-structure for fractional SU-bundles instead of fractional loop U-bundles. Accordingly, before moving up one level further to fractional loop SU, there is an intermediate step from fractional loop U to the looping of fractional SU. This is roughly depicted in the right diagram of (\ref{summdiag}), and is also clear from (\ref{whtowerluqeq}). 
The relation of these two structures is explicitly described in Subsection \ref{sec: fracloopuvssu}. In particular, in the universal case passing from fractional loop U to fractional SU is equivalent to kill the class (Lemma \ref{xi2classlemma})
\[
\xi_2^\ast(c^{\q}_1)=c_1-sg,
\]
where $c^{\q}_1=B\iota_{2\mathbb{Q}}^\ast(c^{\q}_1)\in H^2(BL_0U(n)_{\mathbb{Q}})$ ((\ref{h3bl0unqeq})). 
This can be achieved by the homotopy commutative diagram ((\ref{factorLunifracsudiag})) 
\begin{gather*}
\begin{aligned}
\xymatrix{
BLU(n)_l \ar[r]^<<<<<{B\widehat{Li_{2l}}} \ar[d]^{L\phi_2}&
BU(1)\times B\overline{LU}(n)_l  \ar[d]^{\xi_2}\\
BLSU(n)_{\mathbb{Q}} \ar[r]^{B\widehat{Li_2}_{\mathbb{Q}}} &
BL_0U(n)_{\mathbb{Q}},
}
\end{aligned}
\label{fracloopuvssuintro}
\end{gather*}
which means that the level $1$ lifting $\xi_2$ in the loop case can be pulled back to the level $1$ lifting $\phi_2$ after looping.
It can be showed that ((\ref{h3blunleq}))
\[\label{h3blunleqintro}
H^{\leq 4}(BLU(n)_l)\cong \mathbb{Z}^{\leq 4}[\overline{z}_1,\overline{c}_1,z_2,c_2],
\]
such that ((\ref{bhatl2ilclasseq}))
\[\label{blrhosclasseqintro}
(B\widehat{Li_{2l}})^\ast(\overline{z}_1)=\overline{z}_1, (B\widehat{Li_{2l}})^\ast(g)=\overline{c}_1, (B\widehat{Li_{2l}})^\ast(z_2)=z_2,(B\widehat{Li_{2l}})^\ast(c_2)=c_2.
\]

We can now move up one level further from $L\phi_2$. For this we need to kill the class ((\ref{lphi2z2eq})) 
\[
L\phi_2^\ast(z^{\q}_2)=z_2+\frac{s}{l}\overline{z}_1\overline{c}_1.
\]
Construct the topological group $\widehat{LSU}(n)_l $ defined by a group extension ((\ref{hatsunlexteq}); compare to its integral counterpart (\ref{lsuhatexteqintro}))
\[\label{hatsunlexteqintro}
\{1\}\stackrel{}{\longrightarrow} K(\mathbb{Q},1) \stackrel{}{\longrightarrow} \widehat{LSU}(n)_l \stackrel{\iota_{3l}}{\longrightarrow}  LU(n)_l  \stackrel{}{\longrightarrow} \{1\}
\]
corresponding to the rational class $z_2+\frac{s}{l}\overline{z}_1\overline{c}_1\in H^3(BLU(n)_l;\mathbb{Q})$.
It follows that the homotopy pullback of $L\phi_2$ along the lifting $B\iota_{3\mathbb{Q}}$ in (\ref{whtowerluqeq})
defines a map
\[
\xi_3: B\widehat{LSU}(n)_l\to B\widehat{LSU}(n)_{\mathbb{Q}}.
\] 
The homotopy pullback ((\ref{hatsunldiag}))
\begin{gather*}
\begin{aligned}
\xymatrix{
\widehat{LSU}(n)_l \ar[d]^{\Omega\xi_3}  \ar[r]^{\iota_{3l}}&
LU(n)_l \ar[d]^{\Omega L\phi_2} \\
\widehat{LSU}(n)_{\mathbb{Q}}  \ar[r]^{ \iota_{3\mathbb{Q}}}&
LSU(n)_{\mathbb{Q}},
}
\end{aligned}
\label{hatsunldiagintro}
\end{gather*}
provides a more concrete construction of the topological group $\widehat{LSU}(n)_l$. The upshot is that the map $\xi_3$ is viewed as the level $2$ lifting of the looped fractional classifying space $L\phi_{l|n}$.

To summarize, we have constructed the relative Whitehead tower of the classifying space of the relative structure group ($LU(1)\times LU(n)$, $LU(n)_{\mathbb{Q}}$)
\begin{gather}
\begin{aligned}
\xymatrix{
B\widehat{LSU}(n)_l \ar[d]^{\xi_3}  \ar[r]^{B\iota_{3l}}&
BLU(n)_l \ar[r]^<<<<<{B\widehat{Li_{2l}}} \ar[d]^{L\phi_2}&
BU(1)\times B\overline{LU}(n)_l \ar[r]^<<<<{B\iota_{2l}} \ar[d]^{\xi_2}&
BLU(1)\times BLU(n) \ar[d]^{L\phi_{l|n}}\\
B\widehat{LSU}(n)_{\mathbb{Q}}  \ar[r]^<<<<{B \iota_{3\mathbb{Q}}}&
BLSU(n)_{\mathbb{Q}} \ar[r]^{B\widehat{Li_2}_{\mathbb{Q}}} &
BL_0U(n)_{\mathbb{Q}} \ar[r]^{B\iota_{2\mathbb{Q}}} &
BLU(n)_{\mathbb{Q}}.
}
\end{aligned}
\label{relwhtowerlueqintro}
\end{gather}
The effect of the rightmost square in Diagram (\ref{relwhtowerlueqintro}) on characteristic classes is to kill the relative class $(z_1-sh, z^{\q}_1)\in (H^{1}(BLU(1)\times BLU(n)), H^1(BLU(n)_{\mathbb{Q}}))$.
The effect of the middle square in Diagram (\ref{relwhtowerlueqintro}) on characteristic classes is to kill the relative class $(c_1-sg, c^{\q}_1)\in (H^{1}(BU(1)\times B\overline{LU}(n)_l, H^1(BL_0U(n)_{\mathbb{Q}}))$.
The effect of the leftmost square in Diagram (\ref{relwhtowerlueqintro}) on characteristic classes is to kill the relative class $(z_2+\frac{s}{l}\overline{z}_1\overline{c}_1, z^{\q}_2)\in (H^{3}(BLU(n)_l), H^4(BLSU(n)_{\mathbb{Q}}))$.

Let us turn to the case of fractional bundles after applying the free loop functor to Diagram (\ref{fracudiagintro}).
The following theorem characterizes fractional loop U-structures from the perspective of classifying spaces, and will be proved in Subsection \ref{sec: fracloopu}.
Recall by (\ref{barunlclassintro}) we have $\bar{z}_1$, $z_2$, $c_1$, $c_2$ as the classes of $B\overline{LU}(n)_l$. 
\begin{theorem}\label{fracloopunthm}
Let $E$ be an $(a,\frac{1}{l})$-fractional $U(n)$-bundle determined by Diagram (\ref{fracudiagintro}) and $l>1$. 
Then $E$ admits an $(a,\frac{1}{l})$-fractional loop $U(n)$-structure if and only if there exists a pair of maps $(g^a_2, g_2^l)$ such that the following diagram commutes up to homotopy
\begin{gather}
\begin{aligned}
\xymatrixcolsep{1.4pc}
\xymatrixrowsep{1.0pc}
\xymatrix{
& BU(1)\times B\overline{LU}(n)_l \ar[dr]^{B\iota_{2l}} \ar[dd]^>>>>>>{\xi_2} &\\
LY\ar@{.>}[ur]^<<<<<<{g^a_2} \ar[dd]^{L\pi} \ar[rr]^>>>>>>>>>>{Lf^a}|!{[r];[r]}\hole && BLU(1)\times BLU(n) \ar[dd]^{L\phi_{l|n}} \\
& BL_0U(n)_{\mathbb{Q}}\ar[dr]^{B\iota_{2\mathbb{Q}}} \\
LM\ar@{.>}[ur]^<<<<<<{g^l_2} \ar[rr]^{Lf^l}& & BLU(n)_{\mathbb{Q}}.
}
\end{aligned}
\label{fracloopunliftdiag}
\end{gather}
If such relative lift exists then 
\begin{equation}\label{fracloopunchernforeq}
\begin{split}
&g_2^{a\ast}(g)=a, \ \ ~ g_2^{a\ast}(\overline{z}_1)=\mathfrak{a}=\frac{1}{s}z_1(LE), \ \ ~ 
 g_2^{a\ast}(z_2)=z_2(LE),\ \ ~ \\
&g_2^{a\ast}(c_1)=c_1(LE),  \ \ ~ 
g_2^{a\ast}(c_2)=c_2(LE), \\
& g_2^{l\ast}(z^{\q}_2)=z^{l,a}_2(LE), \ \ ~
~g_2^{l\ast}(c^{\q}_i)=c^{l,a}_i(LE), ~ \  ~ i=1, 2.
\end{split}
\end{equation}
Moreover, the fractional loop U-structures on $E$ are in one-to-one correspondence with the elements of $H^0(LM;\mathbb{Z})$.
\end{theorem}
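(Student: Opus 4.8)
The plan is to realize the fractional loop $U$-condition as the first stage of the relative Whitehead tower (\ref{relwhtowerlueqintro}) of the looped fractional classifying space, following the non-looped argument for Theorem \ref{fracsunthm} after applying the free loop functor $L$ to Diagram (\ref{fracudiagintro}). The combinatorial input is the identity $L\phi_{l|n}^{\ast}(z_1^{\q})=z_1-sh$, which comes from Theorem \ref{fracchernforthm} together with the naturality of the transgression $\nu$; hence the rightmost square of (\ref{relwhtowerlueqintro}) is exactly the stage that kills the relative class $(z_1-sh,\,z_1^{\q})$, and after rationalization it is the homotopy pullback (\ref{lunifracsudiagQ}). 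Pulling this relative class back along the looped pseudo-classifying data $(Lf^a,Lf^l)$ of $E$ produces the obstruction pair $(z_1(LE)-s\mathfrak{a},\,z_1^{l,a}(LE))$: indeed $Lf^{a\ast}(h)=\mathfrak{a}$ because $\nu$ commutes with $f^{a\ast}$ and $f^{a\ast}(g)=a$, while $Lf^{l\ast}(z_1^{\q})=z_1^{l,a}(LE)$ by the definition of the transgressed fractional class.

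The first half of the theorem is then the equivalence between the existence of a relative lift in (\ref{fracloopunliftdiag}) and the fractional loop $U$-condition of Definition \ref{highfracloopstrudef}. A relative lift exists iff $Lf^a$ lifts along $B\iota_{2l}$, $Lf^l$ lifts along $B\iota_{2\mathbb{Q}}$, and the two lifts can be made compatible; the first two are obstructed respectively by $z_1(LE)-s\mathfrak{a}\in H^1(LY;\mathbb{Z})$ and $z_1^{l,a}(LE)\in H^1(LM;\mathbb{Q})$, and compatibility is supplied by the rationalized homotopy pullback (\ref{lunifracsudiagQ}). The key point is that these two vanishing conditions coincide with the single condition $z_1^{l,a}(LE)=0$: naturality of $\nu$ and Theorem \ref{fracchernforthm} give $L\pi^{\ast}(z_1^{l,a}(LE))=\nu(\pi^{\ast}c_1^{l,a}(E))=\nu(c_1(E)-sa)=z_1(LE)-s\mathfrak{a}$; since $\pi$ is locally split, $L\pi$ is surjective on components and $L\pi^{\ast}$ is injective on rational cohomology, and since $H^1(LY;\mathbb{Z})=\mathrm{Hom}(H_1(LY;\mathbb{Z}),\mathbb{Z})$ is torsion-free by universal coefficients, the integral class $z_1(LE)-s\mathfrak{a}$ is forced to vanish whenever $z_1^{l,a}(LE)$ does, and conversely. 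This gives the ``if and only if''. The formulae (\ref{fracloopunchernforeq}) are then read off by chasing $g,\overline{z}_1,z_2,c_1,c_2$ and $z_i^{\q},c_i^{\q}$ through the two compatible lifts, using (\ref{barunlclassintro}), (\ref{h3blunleqintro}) and (\ref{blrhosclasseqintro}).

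For the counting statement I would show that, once nonempty, the set of relative lifts is a torsor over $H^0(LM;\mathbb{Z})$. Fixing a base lift, another relative lift is obtained by changing $g_2^a$ by a class in $[LY,\mathrm{hofib}(B\iota_{2l})]=[LY,K(\mathbb{Z},0)]=H^0(LY;\mathbb{Z})$ and $g_2^l$ by a class in $[LM,\mathrm{hofib}(B\iota_{2\mathbb{Q}})]=[LM,K(\mathbb{Q},0)]=H^0(LM;\mathbb{Q})$, where the first identification uses the group extension (\ref{lu1unexteq2intro}) (so $\mathrm{hofib}(B\iota_{2l})\simeq\Omega B(\Omega U(1))\simeq\mathbb{Z}$) and the second uses that $B\iota_{2\mathbb{Q}}$ is the rational analogue of the universal cover of $BLU(n)$. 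Compatibility of the altered lifts forces the rational class over $LM$ to pull back to the rationalization of the integral class over $LY$; since $L\pi$ is surjective on components, $L\pi^{\ast}$ is injective on $H^0(-;\mathbb{Q})$ and sends $H^0(LM;\mathbb{Z})$ bijectively onto the set of integral classes of the form $L\pi^{\ast}\delta_M$, so this pins the admissible pairs down to a single integral class on $LM$ and identifies the torsor with $H^0(LM;\mathbb{Z})$, exactly in the pattern of Theorem \ref{fracsunthm}.

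The main obstacle is not the obstruction-theoretic computation of the first stage but the bookkeeping of the \emph{relative} lifting problem: one must verify that the homotopy-pullback square (\ref{lunifracsudiagQ}), together with the pullback (\ref{Lunpulldiagintro}) defining $\overline{LU}(n)_l$, actually allows two lifts that exist separately over $LY$ and over $LM$ to be simultaneously rigidified so that the full three-dimensional Diagram (\ref{fracloopunliftdiag}) is homotopy commutative, and then that the resulting set of structures is governed by $H^0(LM;\mathbb{Z})$ rather than by the a priori larger $H^0(LM;\mathbb{Q})$ or $H^0(LY;\mathbb{Z})$. The subtle interplay that makes everything cohere is the torsion-freeness of $H^1$ of free loop spaces, which is precisely what renders the integral condition over $LY$ equivalent to the rational condition over $LM$, so that the single rational class $z_1^{l,a}(LE)$ genuinely controls the whole relative structure.
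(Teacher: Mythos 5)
Your overall route is the paper's: identify $L\phi_{l|n}^{\ast}(z^{\q}_1)=z_1-sh$, transgress to the obstruction pair $(z_1(LE)-s\mathfrak{a},\,z_1^{l,a}(LE))$, treat Diagram (\ref{fracloopunliftdiag}) as the first stage of the looped relative Whitehead tower, and read off (\ref{fracloopunchernforeq}) by a class chase; your explicit appeal to torsion-freeness of $H^1(LY;\mathbb{Z})$ to upgrade rational vanishing to integral vanishing over $LY$ is correct (and more careful than the paper). However, there are two genuine gaps. First, the compatibility of the two lifts is not ``supplied by the rationalized homotopy pullback''. The square in (\ref{lunifracsudiag}) containing $\xi_2$, $B\iota_{2l}$, $L\phi_{l|n}$ and $B\iota_{2\mathbb{Q}}$ is only homotopy commutative, not a homotopy pullback: the horizontal homotopy fibres are $\mathbb{Z}$ and $K(\mathbb{Q},0)$, so they are not equivalent (only the auxiliary square defining $\widetilde{\xi}_2$ after full rationalization is a pullback, and it lives over $BLU(1)_{\mathbb{Q}}\times BLU(n)_{\mathbb{Q}}$, not over the integral corner you lift through). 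Consequently, after choosing $g_2^a$ and $g_2^l$ separately, the maps $\xi_2\circ g_2^a$ and $g_2^l\circ L\pi$ into $BL_0U(n)_{\mathbb{Q}}$ agree a priori only after composing with $B\iota_{2\mathbb{Q}}$, and two lifts of the same map along $B\iota_{2\mathbb{Q}}$ can differ by the action of $H^0(LY;\mathbb{Q})$. You flag this rigidification as the ``main obstacle'' but do not resolve it. The paper's resolving idea is that $BLU(n)\simeq BL_0U(n)\times S^1$, so $B\iota_{2\mathbb{Q}}$ admits a left homotopy inverse $\mathfrak{q}$; composing $B\iota_{2\mathbb{Q}}\circ\xi_2\circ g_2^a\simeq B\iota_{2\mathbb{Q}}\circ g_2^l\circ L\pi$ with $\mathfrak{q}$ cancels $B\iota_{2\mathbb{Q}}$ and yields the top left square. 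Without this (or an equivalent cancellation) the ``if'' direction is incomplete.

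Second, the counting argument does not go through as written. It rests on the claims that $L\pi$ is surjective on components and that $L\pi^{\ast}$ is injective on rational cohomology ``since $\pi$ is locally split''. Neither is true in general: for a locally split $\pi$ (e.g.\ $Y$ the disjoint union of the members of an open cover of $M$), a non-nullhomotopic free loop in $M$ need not lift to any loop in $Y$, so components of $LM$ can be missed entirely and $L\pi^{\ast}$ can have a large kernel on $H^0(-;\mathbb{Q})$. Your identification of the admissible pairs $(\delta_Y,\delta_M)$ with $H^0(LM;\mathbb{Z})$ then breaks down: on components of $LM$ not hit by $LY$ the value of $\delta_M$ is unconstrained in $\mathbb{Q}$, so the torsor you describe is in general strictly larger than $H^0(LM;\mathbb{Z})$. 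The paper avoids this by parametrizing structures differently: a fractional loop U-structure is a class $P\in H^0(LP_{U_{\mathbb{Q}}}(Lf^l))$ with $\delta\circ Lj^{\ast}(P)=z^{\q}_1$, the $LY$-side datum being forced to be the pullback $L\Phi^{\ast}(P)$, and the exact sequence in Diagram (\ref{H12fracloopuprindiag}) --- obtained by transgressing the dual Blakers--Massey sequence of the non-loop principal bundle, the free suspension being an isomorphism in degrees $0$ and $1$ on the first three columns --- shows there are exactly $H^0(LM;\mathbb{Z})$ such choices, with no hypothesis on the behaviour of $L\pi$ on components. You would need either to adopt that parametrization or to supply a justification of the asserted properties of $L\pi$, which is not available at this level of generality.
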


Motivated by Theorem \ref{fracloopunthm}, we call the map $BU(1)\times B\overline{LU}(n)_l \stackrel{\xi_{2}}{\longrightarrow} BL_0U(n)_\mathbb{Q}$ the {\it fractional classifying space} of fractional loop U-structure, and say that the relative structure group ($LU(1)\times LU(n)$, $LU(n)_{\mathbb{Q}}$) of $LE$ can be lifted to the pair of groups ($U(1)\times \overline{LU}(n)_l $, $L_0U(n)_{\mathbb{Q}}$). We call the pair of the liftings ($g_2^a$, $g_2^l$) in Diagram (\ref{fracloopunliftdiag}) the {\it classifying map} of the fractional loop U-structure.
For a fractional U-bundle $E$ defined by Diagram (\ref{fracudiagintro}) after looping (Lemma \ref{obfracloopulemma})
\begin{equation}\label{fracloopuobintro}
(z_1(LE)-s\mathfrak{a}, z_1^{l,a}(LE))=(Lf^{a\ast}, Lf^{l\ast})(z_1-sh, z^{\q}_1).
\end{equation}

The following theorem characterizes fractional loop SU-structures from the perspective of classifying spaces, and will be proved in Subsection \ref{sec: fracloopsu}. To trace the characteristic classes through (\ref{fracloopsunliftdiag}), denote $\bar{z}_1=B\iota_{3l}^\ast(\overline{z}_1)$, $\bar{c}_1=B\iota_{3l}^\ast(\overline{c}_1)$, $z_2=B\iota_{3l}^\ast(z_2)$, and $c_2=B\iota_{3l}^\ast(c_2)$ as classes of $B\widehat{LSU}(n)_l$.

\begin{theorem}\label{fracloopsunthm}
Let $E$ be an $(a,\frac{1}{l})$-fractional $SU(n)$-bundle determined by Diagram (\ref{fracsunliftdiag}) and $l>1$. 
Then $E$ admits an $(a,\frac{1}{l})$-fractional loop $SU(n)$-structure if and only if there exists a pair of maps $(g^a_3, g_3^l)$ such that the following diagram commutes up to homotopy
\begin{gather}
\begin{aligned}
\xymatrixcolsep{1.5pc}
\xymatrixrowsep{1.0pc}
\xymatrix{
& B\widehat{LSU}(n)_l \ar[dr]^{B\iota_{3l}} \ar[dd]^>>>>>{\xi_3} &\\
LY\ar@{.>}[ur]^<<<<<<{g^a_3} \ar[dd]^{L\pi} \ar[rr]^>>>>>>>>>>{Lf^a_2}|!{[r];[r]}\hole && BLU(n)_l\ar[dd]^{L\phi_2} \\
& B\widehat{LSU}(n)_{\mathbb{Q}}\ar[dr]^{B\iota_{3\mathbb{Q}}} \\
LM\ar@{.>}[ur]^<<<<<<{g^l_3} \ar[rr]^{Lf^l_2}& & BLSU(n)_{\mathbb{Q}}.
}
\end{aligned}
\label{fracloopsunliftdiag}
\end{gather}
If such relative lift exists then 
\begin{equation}\label{fracloopsunchernforeq}
\begin{split}
&g_3^{a\ast}(\bar{z}_1)=\mathfrak{a}=\frac{1}{s}z_1(LE),  \ \ ~ g_3^{a\ast}(\bar{c}_1)=a=\frac{1}{s}c_1(LE),    \\
&g_3^{a\ast}(z_2)=z_2(LE)=-\frac{1}{n}z_1(LE)c_1(LE)=-\frac{s}{l}\mathfrak{a}a,\\
&g_3^{a\ast}(c_2)=c_2(LE), \ \ ~g_3^{l\ast}(c^{\q}_2)=c^{l,a}_2(LE).
\end{split}
\end{equation}
Moreover, the fractional loop SU-structures on $E$ are in one-to-one correspondence with the elements of $H^2(LM;\mathbb{Z})$.
\end{theorem}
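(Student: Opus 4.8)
The plan is to run the relative homotopy-lifting argument along the level-$2$ part of the looped relative Whitehead tower (\ref{relwhtowerlueqintro}), in complete parallel with the proof of Theorem \ref{fracu6nthm} but translated into the loop setting. Since $E$ is a fractional $SU$-bundle, Diagram (\ref{fracsunliftdiag}) commutes up to homotopy, and applying the free loop functor $L$ produces a homotopy commutative square exhibiting the pair $(Lf^a_2,Lf^l_2)$ as a classifying map of $LE$ into the level-$1$ looped fractional classifying space $L\phi_2\colon BLU(n)_l\to BLSU(n)_{\mathbb Q}$. By Definition \ref{highfracloopstrudef} a fractional loop $SU$-structure on $E$ is precisely the vanishing $z^{l,a}_2(LE)=0$, the fractional $SU$ hypothesis being already granted; so the first task is to show this vanishing is equivalent to the existence of a relative lift $(g^a_3,g^l_3)$ filling in Diagram (\ref{fracloopsunliftdiag}).

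For the criterion, the square in (\ref{relwhtowerlueqintro}) with corners $B\widehat{LSU}(n)_l$, $BLU(n)_l$, $B\widehat{LSU}(n)_{\mathbb Q}$, $BLSU(n)_{\mathbb Q}$ is a homotopy pullback by the very definition of $\xi_3$. Hence, granted the commuting square above, a relative lift of $(Lf^a_2,Lf^l_2)$ is the same datum as a lift $g^l_3$ of $Lf^l_2$ through $B\iota_{3\mathbb Q}$ over $LM$: the map $g^a_3$ is then produced by the universal property of the pullback from $Lf^a_2$, from $g^l_3\circ L\pi$, and from the homotopy $L\phi_2\circ Lf^a_2\simeq Lf^l_2\circ L\pi$; and the a priori obstruction $Lf^{a\ast}_2(z_2+\frac{s}{l}\overline{z}_1\overline{c}_1)$ to the lift over $LY$ is automatically $L\pi^\ast\big(z^{l,a}_2(LE)\big)$, since $L\phi_2^\ast(z^{\q}_2)=z_2+\frac{s}{l}\overline{z}_1\overline{c}_1$. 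Now $B\iota_{3\mathbb Q}$ is, up to homotopy, the principal $K(\mathbb Q,2)$-fibration classified by $z^{\q}_2\in H^3(BLSU(n)_{\mathbb Q};\mathbb Q)$, so $g^l_3$ exists precisely when $Lf^{l\ast}_2(z^{\q}_2)=0$; and $Lf^{l\ast}_2(z^{\q}_2)=z^{l,a}_2(LE)$ by Theorem \ref{relclassthmintro} together with the functoriality of the transgression $\nu$ (applied to $f^l_2$, using $f^{l\ast}_2(c^{\q}_2)=c^{l,a}_2(E)$). This settles the equivalence.

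Once a relative lift is fixed, the characteristic class formulae (\ref{fracloopsunchernforeq}) follow by chasing classes down (\ref{relwhtowerlueqintro}). Pulling $\overline{z}_1,\overline{c}_1,z_2,c_2$ (the classes of $BLU(n)_l$ in (\ref{h3blunleqintro})) back along $B\iota_{3l}$ and then along $g^a_3$, and invoking (\ref{blrhosclasseqintro}) together with the fractional loop $U$ formulae (\ref{fracloopunchernforeq}) of Theorem \ref{fracloopunthm} (a fractional $SU$-bundle being in particular fractional loop $U$), one obtains $g^{a\ast}_3(\overline{z}_1)=\mathfrak{a}=\frac{1}{s}z_1(LE)$, $g^{a\ast}_3(\overline{c}_1)=a=\frac{1}{s}c_1(LE)$, $g^{a\ast}_3(c_2)=c_2(LE)$, and $g^{l\ast}_3(c^{\q}_2)=c^{l,a}_2(LE)$ from the right-hand column; for the value on $z_2$ one uses that on $B\widehat{LSU}(n)_l$ the class $z_2+\frac{s}{l}\overline{z}_1\overline{c}_1$ has been killed, so there $z_2=-\frac{s}{l}\overline{z}_1\overline{c}_1$, whence $g^{a\ast}_3(z_2)=-\frac{s}{l}\mathfrak{a}a=-\frac{1}{n}(s\mathfrak{a})(sa)=-\frac{1}{n}z_1(LE)c_1(LE)$, using $\frac{s}{l}=\frac{s^2}{n}$.

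Finally, for the count: as in the untwisted loop $SU$ case, once the criterion holds the set of fractional loop $SU$-structures on $E$ --- that is, the set of relative lifts of $(Lf^a_2,Lf^l_2)$ up to homotopy --- is a torsor over a group determined by the standard relative obstruction theory attached to the level-$2$ homotopy pullback. I expect this identification to be the main obstacle. The ambiguity of the bottom lift $g^l_3$ a priori lies in the rational group $H^2(LM;\mathbb Q)$, the fiber of $B\iota_{3\mathbb Q}$ being $K(\mathbb Q,2)$; the point will be that compatibility with the lift $g^a_3$ over $LY$ --- on which $\overline{z}_1,\overline{c}_1,z_2,c_2$ are \emph{integral} classes and $Lf^a_2$ is fixed --- together with the fact that $L\pi^\ast$ detects integral classes, because $\pi\colon Y\to M$ is a locally split surjection, forces the ambiguity into the integral sublattice $H^2(LM;\mathbb Z)\hookrightarrow H^2(LM;\mathbb Q)$, yielding the stated bijection. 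The remaining work is the formal homotopy theory sketched above.
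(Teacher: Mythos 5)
Your treatment of the lifting criterion and of the formulae (\ref{fracloopsunchernforeq}) is essentially the paper's own argument: since the square defining $\xi_3$ is a homotopy pullback, a relative lift of $(Lf^a_2,Lf^l_2)$ amounts to a lift $g^l_3$ of $Lf^l_2$ through $B\iota_{3\mathbb{Q}}$, whose sole obstruction is $Lf_2^{l\ast}(z^{\q}_2)=z_2^{l,a}(LE)$, and the class formulae then follow from (\ref{fracsunchernforeq}), (\ref{barz2eq}) and naturality of the free suspension, exactly as in Lemma \ref{obfracloopsulemma}. Up to this point the proposal is fine.

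The genuine gap is the count of structures, which you defer, and the mechanism you sketch for it does not work. Homotopy classes of relative lifts $(g^a_3,g^l_3)$ cannot be forced into an integral lattice by compatibility over $LY$: the fibre of $B\iota_{3l}\colon B\widehat{LSU}(n)_l\to BLU(n)_l$ is $BS^1_{\mathbb{Q}}\simeq K(\mathbb{Q},2)$ by the extension (\ref{hatsunlexteq}), so for \emph{every} change of $g^l_3$ by a class in $H^2(LM;\mathbb{Q})$ the homotopy pullback (\ref{hatsunldiag}) produces a compatible change of $g^a_3$ over the fixed $Lf^a_2$; nothing on the $LY$-side is integral in the fibre direction, and $L\pi^\ast$ plays no role in cutting down the ambiguity. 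Counting relative lifts in your sense would thus yield (a quotient of) $H^2(LM;\mathbb{Q})$, not $H^2(LM;\mathbb{Z})$. The paper obtains the integral answer by a different, and essential, device that your proposal is missing: a fractional loop SU-structure is enumerated at the level of the looped fractional principal bundles, as a pair $(P,L\Phi_2^\ast(P))\in H^2(LP_{SU_{\mathbb{Q}}}(Lf_2^l))\times H^2(LP_{U_l}(f_2^a))$ with $\delta\circ Lj_2^\ast(P)=z^{\q}_2$ (the loop analogue of counting spin structures by cohomology classes on the frame bundle). Applying the dual Blakers--Massey theorem to Diagram (\ref{fracsuprindiag}) after looping gives the exact ladder (\ref{H23loopfracuprindiag}), whose bottom row $0\to H^2(LM)\to H^2(LP_{SU_{\mathbb{Q}}}(Lf_2^l))\to H^3(BLSU(n)_{\mathbb{Q}})\to H^3(LM)$, together with $Lf_2^{l\ast}(z^{\q}_2)=0$, exhibits the set of admissible pairs as a torsor over $H^2(LM;\mathbb{Z})$. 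Without this step (or an equivalent argument genuinely producing the integral group) the final assertion of the theorem is not established.
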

Motivated by Theorem \ref{fracloopsunthm}, we call the map $B\widehat{LSU}(n)_l\stackrel{\xi_{3}}{\longrightarrow} B\widehat{LSU}(n)_{\mathbb{Q}}$ the {\it fractional classifying space} of fractional loop SU-structure, and say that the relative structure group ($LU(n)_l$, $LSU(n)_{\mathbb{Q}}$) of $LE$ can be lifted to the pair of groups ($\widehat{LSU}(n)_l$, $\widehat{LSU}(n)_{\mathbb{Q}}$). We call the pair of the liftings ($g_3^a$, $g_3^l$) in Diagram (\ref{fracloopsunliftdiag}) the {\it classifying map} of the fractional loop SU-structure.
For a fractional SU-bundle $E$ defined by Diagram (\ref{fracsunliftdiag}) we have (Lemma \ref{obfracloopsulemma})
\begin{equation}\label{fracloopsuobintro}
\begin{split}
&(z_2(LE)+\frac{1}{n}z_1(LE)c_1(LE),z_2^{l,a}(LE))
=(z_2(LE)+\frac{s}{l}\mathfrak{a}a, z_2^{l,a}(LE))\\
&=(Lf_2^{a\ast}, Lf_2^{l\ast}) (z_2+\frac{s}{l}\overline{z}_1\overline{c}_1, z^{\q}_2).
\end{split}
 \end{equation}

%----------------------------------------------------------------------------------------------------------------------------------------------------------
\subsection{Higher fractional structures vs higher fractional loop structures} 
\label{sec: highfracvsintro}
Recall that the hierarchy of fractional loop and non-loop structures is roughly depicted in the right diagram of (\ref{summdiag}).
There are two ways to compare the fractional non-loop structures with the fractional loop structures, either from the perspective of classifying spaces or from the perspective of free suspension.

From the perspective of classifying spaces, we note that the bottom row of Diagram (\ref{relwhtowerlueqintro}) is a refinement of the bottom row of Diagram (\ref{relwhtowerueq}) after looping, if we extend Diagram (\ref{relwhtowerlueqintro}) one step further to the left to reach $BLU\langle 6\rangle(n)_\mathbb{Q}$. In Section \ref{sec: weakfrac} this point will be clearly explained.

Moreover, in Theorem \ref{fracloopuvssuthm1} we show that for a given fractional U-bundle $E$, if $E$ admits a fractional SU-structure, then $E$ is fractional loop U. Conversely, suppose $E$ admits a fractional loop U-structure, then it can be lifted to a fractional SU-structure if and only if the homotopy lifting problem
\begin{gather*}
\begin{aligned}
\xymatrix{
& BLSU(n)_{\mathbb{Q}} \ar[dr]^{B\widehat{Li_2}_{\mathbb{Q}}} \\
LM \ar@{.>}[ur]^<<<<<<<<{} \ar[rr]^<<<<<<<<<<<<<<<<<<{g_2^{l}}&&
BL_0U(n)_{\mathbb{Q}}
}
\end{aligned}
\label{fracloopun+liftdiagintro}
\end{gather*}
has a solution, where $g_2^l$ is one component of the classifying map for the given fractional loop U-structure. 

Similarly, in Theorem \ref{fracloopsuvsu6thm1} we show that for a given fractional SU-bundle $E$, if $E$ admits a fractional U$\langle6\rangle$-structure, then $E$ is fractional loop SU. Conversely, suppose $E$ admits a fractional loop SU-structure, then it can be lifted to a fractional U$\langle6\rangle$-structure if and only if the homotopy lifting problem
\begin{gather*}
\begin{aligned}
\xymatrix{
& BLU\langle 6 \rangle(n)_{\mathbb{Q}}\ar[dr]^{B\widehat{Li_3}_{\mathbb{Q}}} \\
LM \ar@{.>}[ur]^<<<<<<<<{} \ar[rr]^<<<<<<<<<<<<<<<<<<{g_3^{l}}&&
B\widehat{LSU}(n)_{\mathbb{Q}}
}
\end{aligned}
\label{fracloopsun+liftdiagintro}
\end{gather*}
has a solution, where $g_3^l$ is one component of the classifying map for the given fractional loop SU-structure. 

From the perspective of free suspension, one can not only compare the existence of fractional loop or non-loop structures, but can also compare the amounts of possible structures. 

Firstly, the obstructions to the non-loop structures are transgressed by the free suspension $\nu$  to the obstructions to loop structures. Indeed, by the formulae of free suspension (\ref{traneqintro}) and its naturality, it is straightforward to show that 
for a fractional U-bundle $E$, the obstructions to fractional SU (\ref{fracsuobintro}) and loop U (\ref{fracloopuobintro}) satisfy
\[
\nu(c_1(E)-sa, c_1^{l,a}(E))=(z_1(LE)-s\mathfrak{a}, z_1^{l,a}(LE)),
\]
while for a fractional SU-bundle $E$, the obstructions to fractional U$\langle6\rangle$ (\ref{fracu6obintro}) and loop SU (\ref{fracloopsuobintro}) satisfy
\[
\nu(c_2(E)-\frac{s(n-1)}{2l}a^2, c_2^{l,a}(E))=(z_2(LE)+\frac{1}{n}z_1(LE)c_1(LE),z_2^{l,a}(LE)).
\]
In particular, the existence of fractional SU and U$\langle6\rangle$ structures implies the existence of fractional loop U and SU structures respectively. The converse statement may not be true in general. Nevertheless, it is true if one imposes constraints on the topology of $M$ as showed in Theorem \ref{fracloopuvssuthm2} and Theorem \ref{fracloopsuvsu6thm2}. In particular, when $M$ is rationally simply connected, fractional loop U implies fractional SU, while when $M$ is rationally $2$-connected, fractional SU implies fractional U$\langle6\rangle$. Hence in either situation the fractional loop structure is equivalent to its non-loop counterpart.

Secondly, as showed in Theorem \ref{fracloopuvssuthm2} and Theorem \ref{fracloopsuvsu6thm2}, the free suspension $\nu$ transgresses the parameter spaces of non-loop structures to those of loop structures. For instance, when $M$ is connected, there is a one-to-one correspondence between the distinct fractional SU-structures and the fractional loop U-structures on $E$ through the isomorphic free suspension
\[
\nu: H^1(M;\mathbb{Z})\stackrel{\cong}{\longrightarrow}H^0(LM;\mathbb{Z}).
\]
When $M$ is $2$-connected, there is a one-to-one correspondence between the distinct fractional U$\langle 6\rangle$-structures and the fractional loop SU-structures on $E$ through the isomorphic free suspension
\[
\nu: H^3(M;\mathbb{Z})\stackrel{\cong}{\longrightarrow}H^2(LM;\mathbb{Z}).
\] 
%----------------------------------------------------------------------------------------------------------------------------------------------------------

\numberwithin{equation}{section}
\numberwithin{theorem}{section}

%%%%%%%%%%%%%%%%%%%%%%%%%%%%%%%%%%%%%%%%%%%%%%%%%%%%%%%%%%%%%%%%%%%%%%%

\section{Bundle gerbe modules and splitting principle}
\label{sec: split}
Let $\pi: Y\rightarrow M$ be a {\it locally split map}. Denote by $Y^{[2]}=Y\times_{\pi} Y$ the fiber product of $Y$ with itself over $\pi$. Let $(L, Y)$ be a {\it bundle gerbe}, where $L$ is a complex line bundle $L\rightarrow Y^{[2]}$. There is the {\it Dixmier-Douady class} $d(L)=d(L, Y)\in H^3(M;\mathbb{Z})$ served as the obstruction to the gerbe being trivial. 
Suppose $d(L)$ is a torsion class of order $l$. In this case, the bundle gerbe $(L,Y)$ is called a {\it flat bundle gerbe}.

Since $l d(L)=0$, there exists a class $a\in H^2(Y;\mathbb{Z})$ corresponding to a complex line bundle $L_a$ over $Y$ such that 
\[\label{atrivialeq}
L^l\cong \delta(L_a):=\pi_1^\ast(L_a^\ast)\otimes \pi_2^\ast(L_a)
\]
is a {\it trivialization} of the bundle gerbe $(L^l, Y)$, the $l$-th tensor power of $L$, where $\pi_i: Y^{[2]}\rightarrow Y$ is the map which omits the $i$-th element, $L_a^\ast$ is the dual of $L_a$. The choice of such class $a$ is not unique in general. Indeed, suppose $x\in H^2(M;\mathbb{Z})$
 is any class with its associated complex line bundle $J_x$ over $M$. Then 
$$K:= L_a\otimes\pi^\ast(J_x)$$ is another trivialization of $L^l$ corresponding to the class $a+\pi^\ast(x)\in H^2(Y;\mathbb{Z})$. Conversely, any two trivializations differ from each other by a pull-back of a complex bundle over $B$.

Let $E\rightarrow Y$ be a finite rank complex vector bundle. Suppose that $E$ is a {\it bundle gerbe module} of $L$, in the sense that, there is a complex bundle isomorphism
\[
L\otimes \pi_1^\ast(E)\cong \pi_2^\ast(E).
\]
It is clear that $E^{\otimes l}$ is a bundle gerbe module over the trivial bundle gerbe $L^l$. Recall $L_a$ is a chosen trivialization of $L^l$, then the tensor bundle $E^{\otimes l}\otimes L_a^\ast$ over $Y$ descents to a bundle over $M$ denoted by $E^{\otimes l}/L_a$, that is,
\begin{equation}\label{ldescenteq}
\pi^\ast(E^{\otimes l}/L_a)=E^{\otimes l}\otimes L_a^\ast.
\end{equation}

In \cite{Tom} Tomoda has proved the splitting principle of bundle gerbe modules, which allows him to reproduce the twisted Chern classes and the twisted Chern character from the perspective of manifold topology. Indeed, for the bundle gerbe $L$ with the trivialization $L_a$ of $L^l$ and the bundle gerbe module $E$ of rank $n$, Tomoda constructed a commutative diagram of projective bundles 
\begin{gather}
\begin{aligned}
\xymatrix{
\widetilde{\mathbb{P}}(E) \ar[d]^{\Pi} \ar[r]^{\tilde{p}} &
Y \ar[d]^{\pi}\\
\mathbb{P}(E) \ar[r]^{p} &
M,
}
\end{aligned}
\label{relPEdiag}
\end{gather}
where $\widetilde{\mathbb{P}}(E)$ is the canonical {\it projectivization} of $E$ over $Y$ which descents to a {\it projectivization} $\mathbb{P}(E)$ over $M$ by the module structure of $E$. In particular, $\tilde{p}$ and $p$ are fibre bundles with fibre the complex projective space $P^{n-1}(\mathbb{C})$, and the square (\ref{relPEdiag}) is a pullback. Moreover $(\tilde{p}^\ast(L), \widetilde{\mathbb{P}}(E))$ is a bundle gerbe as the pullback of $(L, Y)$.
Let $\gamma_{E}$ be the {\it tautological line bundle} over $\widetilde{\mathbb{P}}(E)$. It is a rank $1$ bundle gerbe module of $\tilde{p}^\ast(L)$. Define the {\it twisted Euler class} $e^a(\gamma_{E})$ by
\begin{equation}\label{twisteeq}
e^a(\gamma_{E}):=\frac{1}{l}e\big((\gamma_{E}^{\otimes l})/\tilde{p}^\ast(L_a)\big)\in H^2(\mathbb{P}(E);\mathbb{Q}),
\end{equation}
where $e\big((\gamma_{E}^{\otimes l})/\tilde{p}^\ast(L_a)\big)$ is the Euler class of $(\gamma_{E}^{\otimes l})/\tilde{p}^\ast(L_a)$ defined by (\ref{ldescenteq}) for the bundle gerbe module $\gamma_{E}$ of $\tilde{p}^\ast(L)$ with a trivialization $\tilde{p}^\ast(L_a)$ of $\tilde{p}^\ast(L)^l$. By the Leary-Hirsch theorem, it can be showed that there exists a unique $(n+1)$-tuple
\[\label{tcherneq}
(c_0^{l,a}(E)=1, c_1^{l,a}(E),\ldots, c_n^{l,a}(E))\in \prod_{k=0}^{n} H^{2k}(M;\mathbb{Q}),
\]
such that
\begin{equation}\label{tchernreleq}
\sum\limits_{k=0}^{n} (-1)^k p^\ast(c_k^{l,a}(E)) \cup  e^a(\gamma_{E})^{n-k}=0.
\end{equation}
It is convenient to set $c_k^{l,a}(E)=0$ when $k>n$.
\begin{definition}\label{fraccherndef}
Let $(L,Y)$ be a flat bundle gerbe of order $l$ with a bundle gerbe module $E$ of rank $n$. Suppose $L_a$ is a trivialization of $L$. 
We define 
\[
c_k^{l,a}(E)\in H^{2k}(M;\mathbb{Q})
\]
satisfying (\ref{tchernreleq}) to be the {\it $k$-th fractional Chern classes} of $E$. Moreover, define the {\it total fractional Chern classes} of $E$ by
\[
c^{l,a}(E)=1+c_1^{l,a}(E)+\cdots+c_n^{l,a}(E).
\]
\end{definition}
\begin{remark}
The class $c_k^{l,a}(E)$ was usually referred to as the {\it twisted Chern class} of the bundle gerbe module $E$.
Here, we prefer the terminology fractional Chern class in order to emphasize that the image $\pi^\ast(c_k^{l,a}(E))$ is generally a fractional class. We will see this in Section \ref{sec: fchern} in details, and also its influence on the universal construction of the fractional Chern classes in Section \ref{sec: Bfchern}.
\end{remark}

%%%%%%%%%%%%%%%%%%%%%%%%%%%%%%%%%%%%%%%%%%%%%%%%%%%%%%%%%%%%%%%%%%%%%%%

\section{Fractional Chern classes} \label{sec: fchern}
In this section, based on the material in Section \ref{sec: split} we show explicit formulae of the fractional Chern classes defined in Definition \ref{fraccherndef}.

Let $(L,Y)$ be a flat bundle gerbe of order $l$ with a bundle gerbe module $E$ of rank $n$. Suppose $L_a$ is a trivialization of $L$. We have the fractional Chern classes $c_k^{l,a}(E)$ defined by the equation (\ref{tchernreleq}). Since the top exterior product bundle $\wedge^n(E)$ is a trivialization of $(L^n, Y)$
\[\label{wedgeneeq}
L^n\cong \delta(\wedge^n(E))=\pi_1^\ast(\wedge^n(E)^\ast)\otimes \pi_2^\ast(\wedge^n(E)),
\]
one has $nd(L)=0$. Since $l$ is the order of the torsion class $d(L)$, it follows that $l~|~n$. Set $n=ls$. 

By (\ref{twisteeq}), (\ref{ldescenteq}) and Diagram (\ref{relPEdiag}), we see that 
\begin{equation}\label{evsrooteq}
\begin{split}
\Pi^\ast(e^a(\gamma_{E}))
&=\frac{1}{l} e \Big(\Pi^\ast\big((\gamma_{E}^{\otimes l})/\tilde{p}^\ast(L_a)\big)\Big)\\
&=\frac{1}{l} e \big((\gamma_{E}^{\otimes l})\otimes \tilde{p}^\ast(L_a)^\ast\big)\\
&=\tilde{p}^\ast(c_1(E)-\frac{1}{l}a).
\end{split}
\end{equation}
Let $x_i^{l,a}$ ($1\leq i\leq n$) be the {\it formal fractional Chern roots} of the bundle gerbe module $E$, that is 
\begin{equation}\label{fracsymkeq}
c_k^{l,a}(E)=\sigma_k(x_1^{l,a},\ldots, x_n^{l,a}),
\end{equation}
where $\sigma_k$ is the $k$-th elementary symmetric polynomial in $n$ variables. Let $x_i$ ($1\leq i\leq n$) be the formal Chern roots of the complex vector bundle $E$,
that is 
\[
c_k(E)=\sigma_k(x_1,\ldots, x_n).
\]
Then by the above computation, we can choose the roots such that 
\begin{equation}\label{pichernrooteq}
\pi^\ast(x_i^{l, a})=x_i-\frac{1}{l}a.
\end{equation}
The pullback of the fractional Chern classes through $\pi$ satisfies
\begin{equation}\label{picherneq1}
\pi^\ast(c^{l,a}(E))=\prod_{i=1}^{n}(1+x_i-\frac{1}{l}a).
\end{equation}
\begin{lemma}\label{pichernlemma}
For each $1\leq k\leq n$,
\[
\pi^\ast(c_k^{l,a}(E))=\sum\limits_{i=0}^{k} \big(\frac{-1}{l}\big)^i \binom{n-k+i}{i} a^i c_{k-i}(E).
\]
\end{lemma}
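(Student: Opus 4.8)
The plan is to extract the degree-$2k$ component of the root-level identity (\ref{picherneq1}), $\pi^\ast(c^{l,a}(E)) = \prod_{i=1}^{n}\bigl(1 + x_i - \tfrac{1}{l}a\bigr)$, whose degree-$2k$ part is, by Definition \ref{fraccherndef} and (\ref{fracsymkeq}), exactly $\pi^\ast(c_k^{l,a}(E))$. Write $y := -\tfrac{1}{l}a \in H^2(Y;\mathbb{Q})$ and group the constant and $y$-parts of each factor, so that $1 + x_i - \tfrac{1}{l}a = (1+y) + x_i$; the whole computation then takes place in the ordinary cohomology ring, with no completion needed.

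First I would expand $\prod_{i=1}^{n}\bigl((1+y)+x_i\bigr)$ by choosing, from each factor, either the summand $(1+y)$ or the summand $x_i$; indexing the choice by the subset $C$ of factors contributing $x_i$ gives
\[
\prod_{i=1}^{n}(1+y+x_i) \;=\; \sum_{C}(1+y)^{n-|C|}\prod_{i\in C}x_i \;=\; \sum_{j=0}^{n}(1+y)^{n-j}\,\sigma_j(x_1,\dots,x_n) \;=\; \sum_{j=0}^{n}(1+y)^{n-j}\,c_j(E),
\]
using $\sum_{|C|=j}\prod_{i\in C}x_i = \sigma_j(x_1,\dots,x_n) = c_j(E)$ and the fact that each $(1+y)^{n-j}$ is an honest polynomial since $0\le j\le n$.

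Next I would isolate the degree-$2k$ term. Since $c_j(E)$ has degree $2j$ and $(1+y)^{n-j} = \sum_{m\ge 0}\binom{n-j}{m}y^m$ with $y^m$ of degree $2m$, the degree-$2k$ part of $(1+y)^{n-j}c_j(E)$ is $\binom{n-j}{k-j}\,y^{k-j}\,c_j(E)$, where $\binom{n-j}{k-j}=0$ for $j>k$. Summing over $j$, substituting $i := k-j$ (so $j$ runs over $0,\dots,k$), and using $y^i = \bigl(\tfrac{-1}{l}\bigr)^i a^i$, one obtains
\[
\pi^\ast(c_k^{l,a}(E)) \;=\; \sum_{j=0}^{k}\binom{n-j}{k-j}\,y^{k-j}\,c_j(E) \;=\; \sum_{i=0}^{k}\Bigl(\tfrac{-1}{l}\Bigr)^{i}\binom{n-k+i}{i}\,a^{i}\,c_{k-i}(E),
\]
which is precisely the asserted formula.

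The argument is entirely formal, so there is no genuine obstacle; the only points requiring a moment's attention are recognizing the elementary symmetric polynomials in the expansion and checking the reindexing identity $\binom{n-j}{k-j}=\binom{n-k+i}{i}$ for $j=k-i$. A slightly slicker variant would instead factor $1+x_i-\tfrac1l a = (1+y)\bigl(1+\tfrac{x_i}{1+y}\bigr)$ in the completed cohomology ring to get the same intermediate identity $\prod_i(1+y+x_i)=\sum_{j=0}^n(1+y)^{n-j}c_j(E)$, but the elementary expansion above avoids even that mild completion. Either route is routine.
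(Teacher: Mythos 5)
Your proof is correct and follows essentially the same route as the paper: both start from the root-level identity (\ref{picherneq1}) and expand the product of shifted factors $1+x_i-\tfrac{1}{l}a$. The only difference is bookkeeping — the paper expands $\sigma_k$ of the shifted roots directly and counts monomials, simplifying $\binom{n}{k}\binom{k}{k-i}/\binom{n}{k-i}=\binom{n-k+i}{i}$, whereas you collect the whole product as $\sum_{j}(1+y)^{n-j}c_j(E)$ and read off the degree-$2k$ part, so your binomial coefficient comes straight from the binomial theorem with the trivial reindexing $\binom{n-j}{k-j}=\binom{n-k+i}{i}$.
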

\begin{proof}
By (\ref{picherneq1}), we can compute
\[
\begin{split}
\pi^\ast(c_k^{l,a}(E))
&=\sum\limits_{1\leq i_1<\ldots <i_k\leq n} (x_{i_1}-\frac{a}{l}) \cdots (x_{i_k}-\frac{a}{l}) \\
&=\sum\limits_{1\leq i_1<\ldots <i_k\leq n} \sum\limits_{i=0}^{k} \sum\limits_{1\leq j_1<\ldots <j_{k-i}\leq n \atop \{j_1,\ldots, j_{k-i}\} \subseteq \{i_1,\ldots, i_k\}} \big(\frac{-a}{l}\big)^i (x_{j_1}\cdots x_{j_{k-i}})\\
&=\sum\limits_{i=0}^{k} \big(\frac{-a}{l}\big)^i \frac{\binom{n}{k} \binom{k}{k-i}}{\binom{n}{k-i}}c_{k-i}(E)\\
&=\sum\limits_{i=0}^{k} \big(\frac{-a}{l}\big)^i \binom{n-k+i}{i} c_{k-i}(E).
\end{split}
\]
\end{proof}
In particular, one can compute the first two fractional Chern classes explicitly,
\begin{equation}\label{12fraccherneq}
\begin{split}
\pi^\ast(c_1^{l,a}(E))&=c_1(E)-sa,\\
\pi^\ast(c_2^{l,a}(E))&=c_2(E)-\frac{n-1}{l}a c_1(E)+\frac{s(n-1)}{2l}a^2.
\end{split}
\end{equation}

If a complex line bundle $L_b$ over $Y$, corresponding to $b\in H^2(Y;\mathbb{Z})$, gives another trivialization of $L^l$, then 
\[\label{abxeq}
b=a+\pi^\ast(x)
\]
 for some $x\in H^2(M;\mathbb{Z})$. By \cite[Proposition 5]{Tom}, $e^b(\gamma_{E})=e^a(\gamma_{E})-\frac{1}{l}p^\ast(x)$. It follows from (\ref{evsrooteq}) and (\ref{pichernrooteq}) that for the fractional formal Chern roots $\pi^\ast(x_i^{l,b})=x_i-\frac{1}{l}b=x_i-\frac{1}{l}a-\frac{1}{l}\pi^\ast(x)$, and 
\[
x_i^{l,b}=x_i^{l,a}-\frac{1}{l}x.
\]
Then by the same computation in the proof of Lemma \ref{pichernlemma}, we see that
\[\label{fracchernchangeeq}
c_k^{l,b}(E)=\sum\limits_{i=0}^{k} \big(\frac{-1}{l}\big)^i \binom{n-k+i}{i} x^i c_{k-i}^{l,a}(E),
\]
for each $1\leq k\leq n$. In particular,
\[\label{12fracchernchangeeq}
\begin{split}
c_1^{l,b}(E)&=c_1^{l,a}(E)-sx,\\
c_2^{l,b}(E)&=c_2^{l,a}(E)-\frac{n-1}{l}x c_1^{l,a}(E)+\frac{s(n-1)}{2l}x^2.
\end{split}
\]

%%%%%%%%%%%%%%%%%%%%%%%%%%%%%%%%%%%%%%%%%%%%%%%%%%%%%%%%%%%%%%%%%%%%%%%

\section{Fractional classifying spaces and fractional U-structure}\label{sec: Bfchern}
In this section we realize the fractional Chern classes in Section \ref{sec: fchern} from the perspective of classifying spaces, from which we propose a notion of fractional U-structure. In particular we prove Theorem \ref{relclassthmintro}.

Let $(L,Y)$ be a flat bundle gerbe of order $l$ with a bundle gerbe module $E$ of rank $n$. Suppose $L_a$ is a trivialization of $L$. We have the fractional Chern classes $c_k^{l,a}(E)$ which can be computed as the $k$-th elementary symmetric polynomial (\ref{fracsymkeq}) of the formal fractional Chern roots $x_i^{l,a}$ of the bundle gerbe module $E$. 
%----------------------------------------------------------------------------------------------------------------------------------------------------------
\subsection{Fractional classifying spaces}
\label{sec: fracclassifyingsp}
Denote by $X_\mathbb{Q}$ the rationalization of a nilpotent space $X$. Recall $BU(1)_{\mathbb{Q}} \simeq K(\mathbb{Q}, 2)$ the Eilenberg–MacLane space with second homotopy group isomorphic to $\mathbb{Q}$.
Motivated by (\ref{pichernrooteq}) we can construct a map
\[\label{itorusphieq}
\widetilde{\chi}: BU(1)\times BU(1)\stackrel{}{\longrightarrow} BU(1)_{\mathbb{Q}} 
\]
represented by the cohomology class $x-\frac{1}{l}g\in H^2(BU(1)\times BU(1);\mathbb{Q})$, where the two canonical generators $g$ and $x$ correspond to the two factors of $BU(1)\times BU(1)$ respectively. Then there is a composition map $\chi$ defined by
\begin{equation}\label{torusphieq}
\begin{split}
\chi: ~~~& BU(1)\times (\underbrace{BU(1)\times \cdots \times BU(1)}_{n}) \\
&\stackrel{\Delta^n\times {\rm id}}{\longrightarrow} 
(\underbrace{BU(1)\times \cdots \times BU(1)}_{n})\times (\underbrace{BU(1)\times \cdots \times BU(1)}_{n})\\
&\stackrel{\mathop{\prod}\limits_{i=1}^{n}\chi_{i,n+i}}{\longrightarrow}
\underbrace{BU(1)_{\mathbb{Q}} \times \cdots \times BU(1)_{\mathbb{Q}} }_{n},
\end{split}
\end{equation}
where $\Delta^n$ is the $n$-fold diagonal map, each $\chi_{i,n+i}$ is a copy of the map $\widetilde{\chi}$ from the $i$-th and $(n+i)$-th factors of the domain to the $i$-th factor of the codomain. Following the notation of the formal (fractional) Chern roots in Section \ref{sec: fchern}, we denote by $g, x_1, \ldots, x_n$ the generators of $H^2(BU(1)\times (\underbrace{BU(1)\times \cdots \times BU(1)}_{n}))$ for the domain of $\chi$ corresponding to each factor. Similarly denote by $\widetilde{x}_1,\ldots, \widetilde{x}_n$ the generators of $H^2(\underbrace{BU(1)_{\mathbb{Q}} \times \cdots \times BU(1)_{\mathbb{Q}} }_{n})$ for the codomain. Hence, by the construction of $\widetilde{\chi}$ we can choose $\widetilde{x}_i$ such that
\begin{equation}\label{torusphiclasseq}
\chi^\ast(\widetilde{x}_i)=x_i-\frac{1}{l}g.
\end{equation}

We would like to extend the map $\chi$ in (\ref{torusphieq}) of maximal tori to a map of Lie groups $\phi=\phi_{l|n}: BU(1)\times BU(n)\stackrel{}{\longrightarrow} BU(n)_{\mathbb{Q}}$. For this purpose we may apply Sullivan's rational homotopy theory, for the details of which one can refer to the standard reference \cite{FHT01}. For a map $h: X\stackrel{}{\longrightarrow} Z$ between nilpotent spaces. Denote by $\widehat{h}: (\Lambda V_Z, d)\stackrel{}{\longrightarrow} (\Lambda V_X, d)$ a {\it Sullivan representative} of $h$, where $(\Lambda V_X, d)$ and $(\Lambda V_Z, d)$ are {\it Sullivan models} of $X$ and $Z$ respectively.

The domain $BU(1)\times BU(n)$ of $\phi$ has Sullivan model $(\Lambda (g, c_1,\ldots, c_n), 0)$, where each $c_k$ is a representative of the {\it the universal Chern class} $c_k\in H^{2k}(BU(n))$. The codomain $BU(n)_{\mathbb{Q}}$ of $\phi$ has a Sullivan model $(\Lambda (\widetilde{c}_1,\ldots, \widetilde{c}_n), 0)$, where each $\widetilde{c}_k$ is a representative of the {\it the universal rational Chern class} $c_k^{\q}\in H^{2k}(BU(n)_{\mathbb{Q}})$.
Consider the diagram
\begin{gather}
\begin{aligned}
\xymatrix{
(\Lambda (g, x_1,\ldots, x_n), 0) &
(\Lambda (g, c_1,\ldots, c_n), 0) \ar[l]_{\widehat{{\rm id}\times BI}}\\
(\Lambda (\widetilde{x}_1,\ldots, \widetilde{x}_n), 0) \ar[u]_{\widehat{\chi}}&
(\Lambda (\widetilde{c}_1,\ldots, \widetilde{c}_n), 0) \ar[l]_{\widehat{BI}} \ar@{.>}[u]_{\widehat{\phi}},
}
\end{aligned}
\label{phisullivandiag}
\end{gather}
where $\widehat{()}$ is to take a Sullivan representative, 
\[
I: \underbrace{U(1)\times \cdots \times U(1)}_{n})\stackrel{}{\longrightarrow} U(n)
\]
is the inclusion of a maximal torus, $B$ is the classifying functor, and $\widehat{\phi}$ will be defined momentarily. Indeed, since $c_k$ and $\widetilde{c}_k$ are the $k$-th elementary symmetric polynomials of 
the roots $x_i$ and $\widetilde{x}_i$ respectively, we have by (\ref{torusphiclasseq})
\[
\begin{split}
\widehat{\chi}\circ\widehat{BI}(\widetilde{c}_k)
&=\widehat{\chi}(\sigma_k(\widetilde{x}_1,\ldots, \widetilde{x}_n))\\
&=\sigma_k(x_1-\frac{g}{l},\ldots, x_n-\frac{g}{l})\\
&\subseteq \Lambda (g, c_1,\ldots, c_n).
\end{split}
\]
Hence, there is a unique morphism $\widehat{\phi}$ such that Diagram (\ref{phisullivandiag}) commutes.
Indeed by the same computation in the proof of Lemma \ref{pichernlemma},
\[
\widehat{\phi}(\widetilde{c}_k)=\sum\limits_{i=0}^{k} \big(\frac{-1}{l}\big)^i \binom{n-k+i}{i} g^i c_{k-i}.
\]
Then we can define the composition map $\phi$ by
\begin{equation}\label{phidefeq}
\phi=\phi_{l|n}: BU(1)\times BU(n)\stackrel{r}{\longrightarrow}(BU(1)\times BU(n))_\mathbb{Q}\stackrel{\phi^\prime}{\longrightarrow} BU(n)_\mathbb{Q},
\end{equation}
where $r$ is the rationalization, and $\phi^\prime$ is the geometric realization of $\widehat{\phi}$. It is clear that the following lemma holds.
\begin{lemma}\label{uniphiclasslemma}
For the universal Chern classes,
\begin{equation}\label{uniphiclasseq}
\phi^\ast(c_k^{\q})=\sum\limits_{i=0}^{k} \big(\frac{-1}{l}\big)^i \binom{n-k+i}{i} g^i c_{k-i}.
\end{equation}
\end{lemma}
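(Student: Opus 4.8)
The plan is to verify that the Sullivan representative $\widehat{\phi}$ constructed in Diagram (\ref{phisullivandiag}) acts on the universal rational Chern classes exactly by the displayed formula, and then transport this identity to the geometric map $\phi=\phi_{l|n}$ via (\ref{phidefeq}). First I would recall that $c_k\in H^{2k}(BU(n);\mathbb{Q})$ is the $k$-th elementary symmetric polynomial $\sigma_k(x_1,\dots,x_n)$ in the Chern roots, and likewise $c_k^{\q}=\sigma_k(\widetilde{x}_1,\dots,\widetilde{x}_n)$ in the rational model of $BU(n)_{\mathbb{Q}}$; this is precisely the content encoded in the horizontal arrows $\widehat{BI}$ and $\widehat{{\rm id}\times BI}$ of Diagram (\ref{phisullivandiag}), which are the maps induced by restriction to maximal tori. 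Combining this with the normalization $\chi^\ast(\widetilde{x}_i)=x_i-\frac{1}{l}g$ from (\ref{torusphiclasseq}), the commutativity of (\ref{phisullivandiag}) forces
\[
\widehat{\phi}(\widetilde{c}_k)=\sigma_k\Bigl(x_1-\tfrac{g}{l},\dots,x_n-\tfrac{g}{l}\Bigr),
\]
so the whole problem reduces to expanding this symmetric polynomial in terms of $g$ and the $c_{k-i}$.

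The key computation is then purely combinatorial and is in fact the same one already carried out in the proof of Lemma \ref{pichernlemma}: expanding $\sigma_k(x_1-\frac{g}{l},\dots,x_n-\frac{g}{l})$ as a sum over subsets, one groups terms according to how many of the $k$ chosen factors contribute the $-g/l$ summand. A subset of size $k-i$ appears with multiplicity $\binom{n}{k}\binom{k}{k-i}/\binom{n}{k-i}=\binom{n-k+i}{i}$ (counting the number of $k$-subsets containing a given $(k-i)$-subset), which yields
\[
\widehat{\phi}(\widetilde{c}_k)=\sum_{i=0}^{k}\Bigl(\tfrac{-1}{l}\Bigr)^i\binom{n-k+i}{i}g^i c_{k-i}.
\]
Since the paper has already proved exactly this binomial identity in Lemma \ref{pichernlemma} (with $a$ in place of $g$ and $c_{k-i}(E)$ in place of $c_{k-i}$), I would simply invoke that computation rather than redo it, noting that it is an identity of polynomials and so holds verbatim at the level of universal classes.

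Finally I would deduce the statement for $\phi^\ast$ itself. By definition (\ref{phidefeq}), $\phi$ is the composite of the rationalization $r\colon BU(1)\times BU(n)\to (BU(1)\times BU(n))_{\mathbb{Q}}$ with the geometric realization $\phi'$ of $\widehat{\phi}$. Rationalization induces an isomorphism on rational cohomology and sends the integral classes $g,c_k$ to their rational images, while $\phi'$ realizes $\widehat{\phi}$, so that $\phi'^{\ast}(c_k^{\q})$ is represented by $\widehat{\phi}(\widetilde{c}_k)$ in the Sullivan model. Pulling back along $r$ then gives $\phi^{\ast}(c_k^{\q})=\sum_{i=0}^{k}\bigl(\tfrac{-1}{l}\bigr)^i\binom{n-k+i}{i}g^i c_{k-i}$ in $H^{2k}(BU(1)\times BU(n);\mathbb{Q})$, which is (\ref{uniphiclasseq}). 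The only mild subtlety — and the step I would be most careful about — is the bookkeeping between the algebraic model and the space: one must be sure that under the standard dictionary of rational homotopy theory the cohomology class of $\widehat{\phi}(\widetilde{c}_k)$ in $(\Lambda(g,c_1,\dots,c_n),0)$ corresponds to $\phi^{\ast}(c_k^{\q})$, using that the $c_k$ are honest polynomial generators and that both sides of the square are formal spaces with zero differential, so no correction terms arise. This is routine, which is why the statement is labeled "it is clear."
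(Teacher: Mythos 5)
Your proposal is correct and follows the same route as the paper: the lemma is an immediate consequence of the construction of $\widehat{\phi}$ in Diagram (\ref{phisullivandiag}), where the expansion of $\sigma_k(x_1-\tfrac{g}{l},\dots,x_n-\tfrac{g}{l})$ is computed by exactly the argument of Lemma \ref{pichernlemma}, and then transported to $\phi^\ast$ through the definition (\ref{phidefeq}). The paper records this with "it is clear," and your write-up just makes the rationalization/realization bookkeeping explicit, which is consistent with the paper's intent.
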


The map $\phi$ serves as a universal object for the bundle gerbe module $E$ of $L$. To be more precise, let $f: Y\rightarrow BU(n)$ be the classifying map of the complex vector bundle $E$. We have the map
\[
f^a:=a\times f: Y\stackrel{}{\longrightarrow} BU(1)\times BU(n)
\]
classifying $L_a\oplus E$, where $a$ is represented by $a\in H^2(Y;\mathbb{Z})$.
We need to construct a nice classifying map $f^l: M\rightarrow BU(n)_{\mathbb{Q}}$. Let $\widehat{\pi}: (\Lambda V_M, d)\stackrel{}{\longrightarrow} (\Lambda (V_M\oplus W), d)$ be a {\it relative Sullivan model} of $\pi$. 
Let $\widehat{f^a}: \Lambda (g, c_1,\ldots, c_n), 0)\stackrel{}{\longrightarrow}  (\Lambda (V_M\oplus W), d)$ be a Sullivan representative of $f^a$. Since by (\ref{uniphiclasseq}) and Lemma \ref{pichernlemma} the class of $\widehat{f^a}\circ \widehat{\phi}(\widetilde{c}_k)$ is $f^{a\ast}\circ \phi^\ast(c_k^{\q})=\pi^\ast(c_k^{l,a}(E))$,
we have 
\[
\widehat{f^a}\circ \widehat{\phi}(\widetilde{c}_k)=\widehat{\pi}(c_k^{l,a}(E))+d z_k
\]
for some $z_k\in \Lambda (V_M\oplus W)$, where by abuse of notation we use $c_k^{l,a}(E)$ to denote a representative of the fractional Chern class. Define
\[
\widehat{f^l}: (\Lambda (\widetilde{c}_1,\ldots, \widetilde{c}_n), 0) \stackrel{}{\longrightarrow} (\Lambda V_M, d)
\]
by $\widehat{f^l}(\widetilde{c}_k)=c_k^{l,a}(E)$. We need to show that the diagram
\begin{gather}
\begin{aligned}
\xymatrix{
 (\Lambda (V_M\oplus W), d) &
(\Lambda (g, c_1,\ldots, c_n), 0) \ar[l]_{\widehat{f^a}}\\
(\Lambda V_M, d) \ar[u]_{\widehat{\pi}}&
(\Lambda (\widetilde{c}_1,\ldots, \widetilde{c}_n), 0) \ar[l]_<<<<<<<{\widehat{f^l}} \ar[u]_{\widehat{\phi}},
}
\end{aligned}
\label{flsullivandiag}
\end{gather}
is commutative up to homotopy. Indeed, recall $\Lambda (t, dt)$ with ${\rm deg}(t)=0$ is a model of standard $1$-simplex. Denote by $\epsilon_i: \Lambda (t, dt)\rightarrow \mathbb{Q}$ the evaluation morphism defined by $\epsilon_i(t)=i$. Define a homotopy
\[
H: (\Lambda (\widetilde{c}_1,\ldots, \widetilde{c}_n), 0)\stackrel{}{\longrightarrow} (\Lambda (V_M\oplus W), d)\otimes \Lambda (t, dt)
\]
by $H(\widetilde{c}_k)=\widehat{\pi}(c_k^{l,a}(E))+d(tz_k)$. $H$ is well defined since $c_k^{l,a}(E)$ is a cycle, and $({\rm id}\otimes \epsilon_i(0))\circ H=\widehat{\pi}\circ \widehat{f^l}$ and $({\rm id}\otimes \epsilon_i(1))\circ H=\widehat{f^a}\circ \widehat{\phi}$. This shows that Diagram (\ref{flsullivandiag}) commutes up to homotopy, and we can take the geometric realization of Diagram (\ref{flsullivandiag}) to obtain a composition map
\[
f^l: M\stackrel{r}{\longrightarrow}M_\mathbb{Q}\stackrel{f^{l\prime}}{\longrightarrow} BU(n)_\mathbb{Q}
\]
with $r$ the rationalization and $f^{l\prime}$ a geometric realization of $\widehat{f^l}$. Moreover, there is the homotopy commutative diagram
\begin{gather}
\begin{aligned}
\xymatrix{
Y \ar[d]^{\pi} \ar[r]^<<<<<{f^a} & 
BU(1)\times BU(n) \ar[d]^{\phi=\phi_{l|n}} \\
M\ar[r]^<<<<<<<<{f^l} &
BU(n)_\mathbb{Q}
}
\end{aligned}
\label{fldefdiag}
\end{gather}
as the geometric realization of Diagram (\ref{flsullivandiag}). From the definition of $\widehat{f^l}$, it is clear that
\[
c_k^{l,a}(E)=f^{l\ast}(c_k^{\q}).
\]
In particular, we have showed Theorem \ref{relclassthmintro}.
The result illustrates that $\phi$ is a ``classifying space'' of the bundle gerbe module $E$ on the level of characteristic classes. Hence, we may call the universal map $\phi: BU(1)\times BU(n) \stackrel{}{\longrightarrow}BU(n)_\mathbb{Q}$ the {\it fractional classifying space} for rank $n$ bundle gerbe modules of any flat bundle gerbe of order $l$, and the pair of the maps $(f^a, f^l)$ the {\it classifying map} of the bundle gerbe module $E$. 

\begin{remark}\label{fracclassifyurmk}
\begin{itemize}
\item
We should emphasize that the defined fractional classifying space does not classify the bundle gerbes modules themselves. It is even unclear how to construct a bundle gerbe module from Diagram (\ref{fldefdiag}) for a given bundle gerbe $L$. 
\item As we explained, the terminologies, fractional classifying space and fractional classifying map, should be understood on the level of characteristic classes for bundle gerbe modules. In other words, it provides a uniform way to understand the fractional Chern classes of bundle gerbe modules from the perspective of classifying spaces, and hence serves as a universal object for the fractional Chern classes. 
\item This point of view particularly allows us to construct and study higher fractional complex structures in the sequel, which are parallel to the higher spin or spin$^c$ structures of real vector bundles and the higher complex structure of complex vector bundles (for instance, see \cite{DHH}, or Subsection \ref{sec: backintro}).
\end{itemize}
\end{remark}

%----------------------------------------------------------------------------------------------------------------------------------------------------------
\subsection{Fractional U-structure}
\label{sec: fracu}
Following the idea in Remark \ref{fracclassifyurmk}, Diagram (\ref{fldefdiag}) itself suggests a notion of fractional structure without the background of bundle gerbes and their modules. It serves as the starting point for the higher fractional structures introduced in the sequel.
\begin{definition}\label{fracundef}
Let $\pi: Y\rightarrow M$ be a map. Let $L_a$ be a complex line bundle determined by $a\in H^2(Y;\mathbb{Z})$ and $E$ be a complex vector bundle of rank $n$ classified by $f: Y\rightarrow BU(n)$. Let $l$ be a positive integer such that $l|n$. If we have a homotopy commutative diagram (\ref{fldefdiag}) with $f^a=a\times f$ and $\phi=\phi_{l|n}$ defined in (\ref{phidefeq}), we call $(E,\pi)$ or simply $E$ an {\it $(a, \frac{1}{l})$-fractional $U(n)$-bundle}, or simply a {\it fractional U-bundle}.
\end{definition}
If we ideally interpret the map $f^l: M\rightarrow BU(n)_{\mathbb{Q}}$ as the classifying map of a {\it rational vector bundle} over $M$, then a fractional U-bundle is roughly a proper vector bundle over $Y$ with a descent rational vector bundle over $M$. We may call the pair of groups ($U(1)\times U(n)$, $U(n)_{\mathbb{Q}}$) the {\it relative structure group} of the fractional bundle $E$. In terms of principal bundles, there exists the morphism of fibrations induced from Diagram (\ref{fldefdiag})
\begin{gather}
\begin{aligned}
\xymatrix{
U(1)\times U(n) \ar[r]^<<<<<{j}  \ar[d]^{\Omega \phi} &
P_U(f^a) \ar[r]^<<<<<{p^a} \ar[d]^{\Phi}&
Y\ar[d]^{\pi}\\
U(n)_{\mathbb{Q}} \ar[r]^{j} &
P_{U_{\mathbb{Q}}}(f^l) \ar[r]^<<<<<{p^l}&
M,
}
\end{aligned}
\label{fracuprindiag}
\end{gather}
where the top row is the principal bundle of $L_a\oplus E$ classified by $f^a$, and the bottom row is the principal fibration induced from $f^l$. As before the latter fibration can be ideally viewed as the {\it rational principal bundle} of the rational vector bundle classified by $f^l$, and Diagram (\ref{fracuprindiag})
can be viewed as the {\it fractional principal bundle} of the fractional U-bundle $E$.

When $E$ admits a fractional U-structure in Definition \ref{fracundef}, we have the {\it $k$-th fractional Chern classes} $c_k^{l,a}(E)$ defined by 
\[
c_k^{l,a}(E):=f^{l\ast}(c_k^{\q}).
\]
By Diagram (\ref{fldefdiag}) and (\ref{uniphiclasseq})
\[\label{fracunclasspieq}
\pi^\ast(c_k^{l,a}(E))=\sum\limits_{i=0}^{k} \big(\frac{-1}{l}\big)^i \binom{n-k+i}{i} a^i c_{k-i}(E).
\]
When the fractional bundle $E$ is derived from a bundle gerbe module structure, it is clear that the fractional Chern class coincides with the twisted Chern class. This justifies our choice of notation.

%%%%%%%%%%%%%%%%%%%%%%%%%%%%%%%%%%%%%%%%%%%%%%%%%%%%%%%%%%%%%%%%%%%%%%%

\section{Higher fractional structures}
\label{sec: strongfrac}
In this section, we introduce fractional SU-structure and fractional U$\langle6\rangle$-structure as higher fractional U-structures analogous to the spin and string structures as higher orientations. 
We work with general fractional U-bundles and the constructions and arguments here can be applied to bundle gerbe modules automatically.
In particular, we characterize the two structures as lifting problems on constructed fractional classifying spaces, and prove Theorem \ref{fracsunthm} and Theorem \ref{fracu6nthm}.

%----------------------------------------------------------------------------------------------------------------------------------------------------------
\subsection{Fractional SU-structure} \label{sec: fracsu}
Let $\pi: Y\rightarrow M$ be a map. Let $L_a$ be a complex line bundle determined by $a\in H^2(Y;\mathbb{Z})$ and $E$ be a complex vector bundle of rank $n$ classified by $f: Y\rightarrow BU(n)$. 
Suppose $E$ admits an $(a,\frac{1}{l})$-fractional $U(n)$-structure determined by Diagram (\ref{fldefdiag}). Then $E$ has the fractional Chern class $c_k^{l,a}(E)=f^{l\ast}(c_k^{\q})\in H^{2k}(M;\mathbb{Q})$ for each $1\leq k\leq n$. 
\begin{definition}\label{fracsundef}
Let $E$ be an $(a,\frac{1}{l})$-fractional $U(n)$-bundle as above. $E$ has an {\it $(a, \frac{1}{l})$-fractional $SU(n)$-structure}, or simply a {\it fractional SU-structure} if $c_1^{l,a}(E)=0$.
\end{definition}

Let us study the universal case. Recall that $n=ls$. Consider the finite covering 
\[
\mathbb{Z}/s\mathbb{Z}\stackrel{}{\longrightarrow} U(n)_l \stackrel{\rho_s}{\longrightarrow} U(n)
\]
of $U(n)$ corresponding to the subgroup $s\mathbb{Z}\subseteq \mathbb{Z}\cong \pi_1(U(n))$, which defines the Lie group $U(n)_l$ and the covering map $\rho_s$. Since $U(n)\cong U(1)\times_{\mathbb{Z}/n\mathbb{Z}} SU(n)$, it is clear that
\begin{equation}\label{Unlisoeq}
 U(n)_l\cong U(1)\times_{\mathbb{Z}/l\mathbb{Z}} SU(n).
\end{equation}
In particular, $U(n)_n=U(n)$ and $U(n)_1=U(1)\times SU(n)$ as Lie groups.
Define a Lie group homomorphism
\[
d_l: U(n)_l\stackrel{}{\longrightarrow} U(1)
\]
by $d_l([z, A])=z^l$. Then under the isomorphism (\ref{Unlisoeq}), $d_n$ is the canonical determinant homomorphism ${\rm det}: U(n)\longrightarrow U(1)$, and $\rho_1: U(1)\times SU(n)\longrightarrow U(n)$ is the Lie group homomorphism defined by $\rho_1(z, A)=z A$. Hence there is the morphism of Lie group extensions
\begin{gather}
\begin{aligned}
\xymatrix{
\{1\} \ar[r] &
 SU(n)\ar@{=}[d] \ar[r]^{i_{2}^\prime} &
 U(n)_l \ar[d]^{\rho_s} \ar[r]^{d_l} &
 U(1) \ar[d]^{\tau_s} \ar[r] &
 \{1\} \\
\{1\} \ar[r]&
SU(n)\ar[r]^{i_2} &
U(n)  \ar[r]^{{\rm det}} &
 U(1)  \ar[r] &
  \{1\} ,
}
\end{aligned}
\label{unextdiag}
\end{gather}
where $i_2$ and $i_{2}^\prime$ are the standard inclusions, and $\tau_s(z)=z^s$ for any $z\in U(1)$. In particular, the right square of Diagram (\ref{unextdiag}) is a pullback of Lie groups, which determines the Lie group extension
\begin{equation}\label{u1unexteq}
\{1\}\stackrel{}{\longrightarrow}U(n)_l \stackrel{(d_l, \rho_s)}{\longrightarrow} U(1)\times U(n) \stackrel{\mu_s}{\longrightarrow} U(1)\stackrel{}{\longrightarrow}\{1\},
\end{equation}
where $\mu_s(z, A)=(\tau_{-s}\times {\rm det})(z, A)=z^{-s}\cdot {\rm det}(A)$ for any $(z, A)\in U(1)\times U(n)$. 

From now on let us suppose $l>1$. Recall that we have the class $g\in H^2(BU(1))$ such that $(B{\rm det})^\ast(g)=c_1$.
Applying the classifying functor $B$ to Diagram (\ref{unextdiag}), it is easy to compute that as graded rings
\begin{equation}\label{hbunleq}
H^\ast(BU(n)_l;\mathbb{Z})\cong \mathbb{Z}[\overline{c}_1, c_2,\ldots, c_n],
\end{equation}
such that $(Bi_{2}^\prime)^\ast(c_i)\in H^{2i}(BSU(n);\mathbb{Z})$ is the $i$-th universal Chern class for each $i\geq 2$, and
\begin{equation}\label{fracsuchernreleq}
(Bd_l)^\ast(g)=\overline{c}_1, \ \ ~ (B\rho_s)^\ast(c_1)=s\overline{c}_1, \ \ ~ (B\rho_s)^\ast (c_i)=c_i,~{\rm for}~{\rm each}~i\geq 2.
\end{equation}
Hence, when a rank $n$ complex bundle $\zeta$ satisfies that $s~|~c_1(\zeta)$, its structure group $U(n)$ can be lifted to $U(n)_l$.

It is clear that $B\mu_s$ represents the class $c_1-sg\in H^2(BU(1)\times BU(n))$. 
Since $\phi^\ast(c^{\q}_1)=c_1-sg$ by (\ref{uniphiclasseq}), we can construct the following homotopy commutative diagram of fibrations
\begin{gather}
\begin{aligned}
\xymatrix{
BU(n)_l \ar[r]^<<<<<<{B(d_l, \rho_s)} \ar[d]^{\phi_2}&
BU(1)\times BU(n) \ar[r]^<<<<{B\mu_s} \ar[d]^{\phi}&
K(\mathbb{Z},2) \ar[d]^{r}\\
BSU(n)_{\mathbb{Q}} \ar[r]^{Bi_{2\mathbb{Q}}} &
BU(n)_{\mathbb{Q}}  \ar[r]^{c^{\q}_1}&
K(\mathbb{Q},2),
}
\end{aligned}
\label{unifracsudiag}
\end{gather}
where the top row is obtained from (\ref{u1unexteq}) by taking the classifying functor $B$, $r$ is the rationalization, and $\phi_2$ is induced from $\phi$. 
Indeed, let $\phi_{\mathbb{Q}}: BU(1)_{\mathbb{Q}}\times BU(n)_{\mathbb{Q}}\stackrel{}{\rightarrow} BU(n)_{\mathbb{Q}}$ be the rationalization of $\phi$.
The homotopy pullback of $\phi_{\mathbb{Q}}$ along $Bi_{2\mathbb{Q}}$ gives a map $\widetilde{\phi}_2: BU(n)_{l\mathbb{Q}}\stackrel{}{\rightarrow} BSU(n)_{\mathbb{Q}}$. Then $\phi_2$ can be chosen to be the composition
\begin{equation}\label{phi2def}
\phi_2: BU(n)_l\stackrel{r}{\longrightarrow}BU(n)_{l\mathbb{Q}} \stackrel{\widetilde{\phi}_2}{\longrightarrow} BSU(n)_{\mathbb{Q}}, 
\end{equation}
where $r$ is the rationalization.

Now let us turn to the fractional bundle $E$ determined by Diagram (\ref{fldefdiag}).
We summarize the formulae for the involved obstructions in the following lemma.
\begin{lemma}\label{obfracsulemma}
\[
f^{a\ast}(c_1-sg)=c_1(E)-sa, \ \ ~  f^{l\ast}(c^{\q}_1)=c_1^{l,a}(E). 
\]
\end{lemma}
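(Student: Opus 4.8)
The proof is essentially an exercise in unwinding definitions, so the plan is short. The one nontrivial input is the universal formula of Lemma~\ref{uniphiclasslemma}, which at $k=1$ gives $\phi_{l|n}^\ast(c_1^{\q}) = \binom{n-1}{0}c_1 - \tfrac1l\binom{n}{1}g\,c_0 = c_1 - \tfrac nl g = c_1 - sg$ (using $c_0 = 1$ and $n = ls$); this is exactly the class represented by $B\mu_s$, as already noted in the text preceding the lemma. So the relative class being pulled back is $(c_1 - sg,\, c_1^{\q})$, and the task is to identify its image under $(f^{a\ast}, f^{l\ast})$.

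For the first identity I would use that $f^a = a\times f$, so that $f^{a\ast}$ is computed factor by factor: on the $BU(1)$-factor it is $a^\ast$, and on the $BU(n)$-factor it is $f^\ast$. Since $g = c_1 \in H^2(BU(1))$ is the universal first Chern class, the standing convention identifying the class $a \in H^2(Y;\mathbb{Z})$ with its classifying map $a\colon Y\to BU(1)$ gives $a^\ast(g) = a$; and $f^\ast(c_1) = c_1(E)$ by definition of $f$. Linearity of $f^{a\ast}$ then yields $f^{a\ast}(c_1 - sg) = c_1(E) - sa$.

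The second identity is immediate: the fractional Chern classes of a fractional U-bundle were \emph{defined} in Subsection~\ref{sec: fracu} by $c_k^{l,a}(E) := f^{l\ast}(c_k^{\q})$, so the case $k=1$ reads $f^{l\ast}(c_1^{\q}) = c_1^{l,a}(E)$ on the nose (and when $E$ arises from an actual bundle gerbe module this is part (ii) of Theorem~\ref{relclassthmintro}). As a consistency check one may also observe that homotopy commutativity of Diagram (\ref{fldefdiag}) forces $\pi^\ast(c_1^{l,a}(E)) = f^{a\ast}\phi_{l|n}^\ast(c_1^{\q}) = c_1(E) - sa$, in agreement with (\ref{12fraccherneq}). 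There is no real obstacle here; the lemma is pure bookkeeping, and the only step deserving a word of justification is the identity $a^\ast(g) = a$, which is just the naming convention for line bundles and their classifying maps.
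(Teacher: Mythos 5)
Your proposal is correct, and the second identity is handled exactly as in the paper: $f^{l\ast}(c_1^{\q})=c_1^{l,a}(E)$ is just the definition of the fractional Chern class (equivalently Theorem~\ref{E>=fracunthm} / Theorem~\ref{relclassthmintro}(ii) when $E$ comes from a bundle gerbe module). For the first identity, however, you take a different and more elementary route than the paper. You compute $f^{a\ast}$ directly on the K\"unneth factors of $H^2(BU(1)\times BU(n))$: since $f^a=(a,f)$, one has $f^{a\ast}(g)=a$ (already noted in the text) and $f^{a\ast}(c_1)=c_1(E)$, whence $f^{a\ast}(c_1-sg)=c_1(E)-sa$ with no further input. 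The paper instead reads the identity off the homotopy commutative square (\ref{fldefdiag}): using $\phi_{l|n}^\ast(c_1^{\q})=c_1-sg$ from (\ref{uniphiclasseq}) it writes $f^{a\ast}(c_1-sg)=f^{a\ast}\phi_{l|n}^\ast(c_1^{\q})=\pi^\ast f^{l\ast}(c_1^{\q})=\pi^\ast(c_1^{l,a}(E))$ and then invokes the formula $\pi^\ast(c_1^{l,a}(E))=c_1(E)-sa$ of (\ref{12fraccherneq}). Your computation is independent of the fractional Chern class formulae and of the commutativity of the diagram, and it recovers the paper's chain of equalities as the consistency check you record at the end; the paper's version has the mild advantage of exhibiting the identity as the pullback under $\pi$ of the obstruction class $c_1^{l,a}(E)$, which is the form used in the proof of Theorem~\ref{fracsunthm}. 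Both arguments are valid, and your observation that the only point needing justification is $a^\ast(g)=a$ is accurate.
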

\begin{proof}
$f^{l\ast}(c^{\q}_1)=c_1^{l,a}(E)$ is obtained by Theorem \ref{E>=fracunthm}. Then with Diagram (\ref{fldefdiag}) $f^{a\ast}(c_1-sg)=c_1(E)-sa$ follows from (\ref{uniphiclasseq}) and (\ref{12fraccherneq}).
\end{proof}

We are ready to prove Theorem \ref{fracsunthm}.
\begin{proof}[Proof of Theorem \ref{fracsunthm}]
The bottom square in Diagram (\ref{fracsunliftdiag}) is Diagram (\ref{fldefdiag}), while with the convention $i_{2l}=(d_l,\rho_s)$ the top right square in Diagram (\ref{fracsunliftdiag}) is the left square in Diagram (\ref{unifracsudiag}).

If $E$ is a fractional SU-bundle, then $c^{l,a}_1(E)=0$, and hence $\pi^\ast(c^{l,a}_1(E))=c_1(E)-sa=0$ by Lemma \ref{obfracsulemma}. With Diagram (\ref{unifracsudiag}), this is equivalent to that both the compositions $c^{\q}_1\circ f^l$ and $B\mu_s\circ f^a$ are null homotopic. It follows that there exist $f^a_2: Y\rightarrow BU(n)$ and $f^l_2: M\rightarrow BSU(n)_{\mathbb{Q}}$ such that $B(d_l, \rho_s)\circ f^a_2\simeq f^a$ and $Bi_{2\mathbb{Q}}\circ f^l_2\simeq f^l$, in other words, the front and back triangles in Diagram (\ref{fracsunliftdiag}) commute up to homotopy. 

In particular,
\begin{equation}\label{supfeq1}
Bi_{2\mathbb{Q}}\circ\phi_2\circ f^a_2\simeq 
\phi\circ B(d_l, \rho_s)\circ f^a_2\simeq \phi\circ f^a\simeq f^l\circ \pi\simeq Bi_{2\mathbb{Q}}\circ f_2^l\circ \pi.
\end{equation}
On the other hand, by Sullivan's rational homotopy theory it is easy to see that $BU(n)_{\mathbb{Q}}\simeq BSU(n)_{\mathbb{Q}} \times BS^1_{\mathbb{Q}}$ such that $Bi_{2\mathbb{Q}}$ admits a left homotopy inverse $q$. It follows from (\ref{supfeq1}) that
\[ 
\phi_2\circ f^a_2\simeq q\circ Bi_{2\mathbb{Q}}\circ\phi_2\circ f^a_2\simeq q\circ Bi_{2\mathbb{Q}}\circ f_2^l\circ \pi\simeq f_2^l\circ \pi,
\]
that is, the left top square commutes up to homotopy. Hence we have showed that Diagram (\ref{fracsunliftdiag}) exists and commutes up to homotopy if $E$ is a fractional SU-bundle. The proof of the converse statement is similar and omitted. Moreover, (\ref{fracsunchernforeq}) follows immediately from (\ref{fracsuchernreleq}) and Theorem \ref{E>=fracunthm}.

We are left to count the number of the fractional SU-structures. Suppose $E$ is a fractional SU-bundle. From Diagram (\ref{fracuprindiag}) and Diagram (\ref{fldefdiag}), by the dual Blakers-Massey theorem \cite[Theorem C.3]{DHH} there is the commutative diagram of exact sequences
\begin{gather}
\begin{aligned}
\xymatrix{
0\ar[r]  &
H^1(Y) \ar[r]^<<<<<{p^{a\ast}} &
H^1(P_U(f^a)) \ar[r]^<<<<<{\delta\circ j^\ast}&
H^2(BU(1)\times BU(n)) \ar[r]^<<<<<{f^{a\ast}} &
H^2(Y) \\
0\ar[r]  &
H^1(M) \ar[r]^<<<<<{p^{l\ast}}  \ar[u]_{\pi^\ast}&
H^1(P_{U_{\mathbb{Q}}}(f^l)) \ar[r]^<<<<<<<<{\delta\circ j^\ast}   \ar[u]_{\Phi^\ast} &
H^2(BU(n)_{\mathbb{Q}}) \ar[r]^<<<<<<<<{f^{l\ast}} \ar[u]_{\phi^\ast}&
H^2(M)\ar[u]_{\pi^\ast},
}
\end{aligned}
\label{H12fracuprindiag}
\end{gather}
where the transgressions $\delta: H^{1}(U(1)\times U(n))\stackrel{}{\rightarrow}H^2(BU(1)\times BU(n))$ and $\delta: H^1(U(n)_{\mathbb{Q}})\stackrel{}{\rightarrow}H^2(BU(n)_{\mathbb{Q}})$ are isomorphisms. Since $f^{l\ast}(c^{\q}_1)=c_1^{l,a}(E)=0$ and $f^{a\ast}(c_1-sg)=c_1(E)-sa=0$, a fractional SU-structure is determined by the choice of the pair $(P, \Phi^\ast(P))\in H^1(P_{U_{\mathbb{Q}}}(f^l))\times H^1(P_U(f^a))$ such that $\delta\circ j^\ast(P)=c^{\q}_1$. From Diagram (\ref{H12fracuprindiag}), there are exactly $H^1(M)$ many of such choices. Hence the fractional SU-structures on $E$ are in one-to-one correspondence with the elements of $H^1(M)$. This completes the proof of the theorem.
\end{proof}
By Theorem \ref{fracsunthm} $\phi_2$ is a ``classifying space'' of the fractional SU-bundle $E$ on the level of characteristic classes. Hence, we may call the universal map $\phi_2: BU(n)_l \stackrel{}{\longrightarrow}BSU(n)_\mathbb{Q}$ the {\it fractional classifying space} of the fractional SU-structure, and the fractional SU-bundle $E$ has the {\it classifying map} $(f^a_2, f^l_2)$.

Geometrically, in Diagram (\ref{fracsunliftdiag}) the structure group $U(n)_{\mathbb{Q}}$ of the rational vector bundle determined by $f^l$ is lifted to $SU(n)_{\mathbb{Q}}$ through $Bi_{2\mathbb{Q}}$, while the structure group $U(1)\times U(n)$ of the vector bundle $L_a\oplus E$ is lifted to $U(n)$ through $B(d_l, \rho_s)$. Hence a fractional U-bundle $E$ admits a fractional SU-structure if and only if its relative structural group ($U(1)\times U(n)$, $U(n)_{\mathbb{Q}}$) can be lifted to ($U(n)$, $SU(n)_{\mathbb{Q}}$).
In terms of principal bundles, this means that the fractional principal bundle (\ref{fracuprindiag}) of the fractional U-bundle $E$ can be lifted to the fractional principal bundle 
\begin{gather}
\begin{aligned}
\xymatrix{
U(n)_l \ar[r]^<<<<<<{j_{2}}  \ar[d]^{\Omega \phi_2} &
P_{U_l}(f_2^a) \ar[r]^<<<<<{p_2^a} \ar[d]^{\Phi_2}&
Y\ar[d]^{\pi}\\
SU(n)_{\mathbb{Q}} \ar[r]^<<<<<{j_2} &
P_{SU_{\mathbb{Q}}}(f_2^l) \ar[r]^<<<<<{p_2^l}&
M,
}
\end{aligned}
\label{fracsuprindiag}
\end{gather}
where the top row is the principal bundle classified by $f_2^a$, and the bottom row is the principal fibration induced from $f_2^l$.

%----------------------------------------------------------------------------------------------------------------------------------------------------------
\subsection{Fractional U$\langle 6 \rangle$-structure}
\label{sec: fracu6}
Let $\pi: Y\rightarrow M$ be a map. Let $L_a$ be a complex line bundle determined by $a\in H^2(Y;\mathbb{Z})$ and $E$ be a complex vector bundle of rank $n$ classified by $f: Y\rightarrow BU(n)$. 
Suppose $l>1$ and $E$ admits an $(a,\frac{1}{l})$-fractional $SU(n)$-structure as described in Definition \ref{fracsundef}.
\begin{definition}\label{fracu6ndef}
Let $E$ be an $(a,\frac{1}{l})$-fractional $SU(n)$-bundle. $E$ has an {\it $(a, \frac{1}{l})$-fractional $U\langle 6 \rangle(n)$-structure}, or simply a {\it fractional U$\langle 6 \rangle$-structure} if $c_2^{l,a}(E)=0$.
\end{definition}

Let us study the universal case. Consider the fractional classifying space $\phi_2: BU(n)_l\stackrel{}{\rightarrow} BSU(n)_\mathbb{Q}$ of fractional SU-structure.
\begin{lemma}\label{phi2chernlemma}
For each $k\geq 2$,
\[
\phi_2^\ast(c_k^{\q})=\sum\limits_{i=0}^{k-2} \big(\frac{-1}{l}\big)^i \binom{n-k+i}{i} \overline{c}_1^i c_{k-i}+  \big(\frac{-1}{l}\big)^k (1-k)\binom{n}{k} \overline{c}_1^k.
\]
\end{lemma}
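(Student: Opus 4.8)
The plan is to pull back the formula $\phi^\ast(c_k^{\q}) = \sum_{i=0}^k (\tfrac{-1}{l})^i \binom{n-k+i}{i} g^i c_{k-i}$ from Lemma \ref{uniphiclasslemma} along the map $B(d_l,\rho_s): BU(n)_l \to BU(1)\times BU(n)$, using the compatibility furnished by Diagram (\ref{unifracsudiag}). Indeed, by the construction of $\phi_2$ in (\ref{phi2def}) as a lift fitting into the homotopy pullback square (\ref{unifracsudiag}), we have $\phi \circ B(d_l,\rho_s) \simeq Bi_{2\mathbb{Q}} \circ \phi_2$ (after rationalization, which is harmless since everything below lives in rational cohomology and $c_k^{\q}$ already has $\mathbb{Q}$-coefficients). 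Since $Bi_{2\mathbb{Q}}^\ast(c_k^{\q}) = c_k^{\q}$ as the universal rational Chern class of $BSU(n)_{\mathbb{Q}}$ (this is just the standard fact that $BSU(n) \to BU(n)$ pulls the Chern classes back to Chern classes, with $c_1 \mapsto 0$), we obtain
\[
\phi_2^\ast(c_k^{\q}) = B(d_l,\rho_s)^\ast \phi^\ast(c_k^{\q}) = B(d_l,\rho_s)^\ast\Big( \sum_{i=0}^{k} \big(\tfrac{-1}{l}\big)^i \binom{n-k+i}{i} g^i c_{k-i} \Big).
\]

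Next I would substitute the restriction formulae (\ref{fracsuchernreleq}): $(Bd_l)^\ast(g) = \overline{c}_1$, $(B\rho_s)^\ast(c_1) = s\overline{c}_1$, and $(B\rho_s)^\ast(c_i) = c_i$ for $i \geq 2$. Recalling $n = ls$, the map $B(d_l,\rho_s)$ sends $g \mapsto \overline{c}_1$, sends $c_1 \mapsto s\overline{c}_1 = \tfrac{n}{l}\overline{c}_1$, and fixes $c_i$ for $i \geq 2$. Splitting the sum according to whether $k-i \geq 2$, equals $1$, or equals $0$:
\[
\phi_2^\ast(c_k^{\q}) = \sum_{i=0}^{k-2} \big(\tfrac{-1}{l}\big)^i \binom{n-k+i}{i} \overline{c}_1^i c_{k-i} + \big(\tfrac{-1}{l}\big)^{k-1}\binom{n-1}{k-1}\overline{c}_1^{k-1}\cdot s\overline{c}_1 + \big(\tfrac{-1}{l}\big)^{k}\binom{n}{k}\overline{c}_1^{k},
\]
where the middle term comes from $i = k-1$ (so $c_{k-i} = c_1 \mapsto s\overline{c}_1$) and the last from $i = k$ (so $c_{k-i} = c_0 = 1$). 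It remains to combine the last two terms into the single term $(\tfrac{-1}{l})^k(1-k)\binom{n}{k}\overline{c}_1^k$ claimed in the statement.

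The one genuine computation is therefore the binomial identity
\[
-\tfrac{(-1)^{k-1}}{l^{k-1}}\, s\binom{n-1}{k-1} + \tfrac{(-1)^k}{l^k}\binom{n}{k} = \tfrac{(-1)^k}{l^k}(1-k)\binom{n}{k},
\]
equivalently $-l\, s\binom{n-1}{k-1} + \binom{n}{k} = (1-k)\binom{n}{k}$, i.e. $ls\binom{n-1}{k-1} = k\binom{n}{k}$. Since $ls = n$ and $k\binom{n}{k} = n\binom{n-1}{k-1}$ is the elementary absorption identity, this holds. I do not expect any real obstacle here; the only point requiring a little care is bookkeeping the boundary indices $i = k-1, k$ correctly and confirming that the coefficient matching for $2 \le k-i$ in the first sum agrees verbatim with the statement (it does, term by term). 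Thus the lemma follows.
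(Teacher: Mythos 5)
Your proposal is correct and follows essentially the same route as the paper's own proof: pull back $\phi^\ast(c_k^{\q})$ along $B(d_l,\rho_s)$ using Diagram (\ref{unifracsudiag}) and $Bi_{2\mathbb{Q}}^\ast(c_k^{\q})=c_k^{\q}$, substitute (\ref{fracsuchernreleq}), and merge the $i=k-1$ and $i=k$ terms via $n\binom{n-1}{k-1}=k\binom{n}{k}$. The only blemish is the spurious leading minus sign in your displayed binomial identity; the terms to combine are $\big(\tfrac{-1}{l}\big)^{k-1}s\binom{n-1}{k-1}+\big(\tfrac{-1}{l}\big)^{k}\binom{n}{k}$, and your subsequent ``equivalently'' reduction already handles exactly this correct form.
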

\begin{proof}
Recall $n=ls$. By Diagram (\ref{unifracsudiag}), (\ref{uniphiclasseq}) and (\ref{fracsuchernreleq}), we have 
\[
\begin{split}
\phi_2^\ast(c_k^{\q})
&=\phi_2^\ast\circ Bi_{2\mathbb{Q}}^\ast(c_k^{\q})
=B(d_l, \rho_s)^\ast\circ \phi^\ast(c_k^{\q})\\
&=B(d_l, \rho_s)^\ast\Big(\sum\limits_{i=0}^{k} \big(\frac{-1}{l}\big)^i \binom{n-k+i}{i} g^i c_{k-i}\Big)\\
&=\sum\limits_{i=0}^{k-2} \big(\frac{-1}{l}\big)^i \binom{n-k+i}{i} \overline{c}_1^i c_{k-i}+\big(\frac{-1}{l}\big)^{k-1} \binom{n-1}{k-1} s\overline{c}_1^{k}+\big(\frac{-1}{l}\big)^k \binom{n}{k} \overline{c}_1^k\\
&=\sum\limits_{i=0}^{k-2} \big(\frac{-1}{l}\big)^i \binom{n-k+i}{i} \overline{c}_1^i c_{k-i}+  \big(\frac{-1}{l}\big)^k (1-k)\binom{n}{k} \overline{c}_1^k.
\end{split}
\]
\end{proof}
In particular, for $k=2$,
\begin{equation}\label{phi2c2eq}
\phi_2^\ast(c^{\q}_2)=c_2-\frac{s(n-1)}{2l}\overline{c}_1^2.
\end{equation}
We may define a topological group $U\langle 6 \rangle(n)_l$ by the pullback
\begin{gather}
\begin{aligned}
\xymatrix{
U\langle 6 \rangle(n)_l \ar[d]^{\Omega \phi_3}  \ar[r]^{i_{3l}}&
U(n)_l \ar[d]^{\Omega \phi_2} \\
U\langle 6 \rangle(n)_{\mathbb{Q}}  \ar[r]^{i_{3\mathbb{Q}}}&
SU(n)_{\mathbb{Q}},
}
\end{aligned}
\label{u6nldiag}
\end{gather}
where $i_{3\mathbb{Q}}$ is the rationalization of the group extension
\begin{equation}\label{u6nexteq}
\{1\}\stackrel{}{\longrightarrow} K(\mathbb{Z},2) \stackrel{}{\longrightarrow} U\langle 6 \rangle (n)\stackrel{i_3}{\longrightarrow}  SU(n) \stackrel{}{\longrightarrow} \{1\}
\end{equation}
with suitable group structure on $K(\mathbb{Z},2)$ (see Remark \ref{u6stringrmk} for more detials), and $\phi_3=B\Omega\phi_3: BU\langle 6 \rangle(n)_l \stackrel{}{\rightarrow} BU\langle 6 \rangle(n)_{\mathbb{Q}}$ is the induced map between classifying spaces. Hence there is the induced group extension
\begin{equation}\label{u6nlexteq}
\{1\}\stackrel{}{\longrightarrow} K(\mathbb{Q},2) \stackrel{}{\longrightarrow} U\langle 6 \rangle (n)_l\stackrel{i_{3l}}{\longrightarrow}  U(n)_l \stackrel{}{\longrightarrow} \{1\}.
\end{equation}
In particular, let us denote $\bar{c}_1=Bi_{3l}^\ast(\overline{c}_1)$ and $c_i=Bi_{3l}^\ast(c_i)$ for each $1\leq i\leq n$. From (\ref{u6nldiag}) and (\ref{phi2c2eq}) we have that 
\begin{equation}\label{barc2eq}
c_2=\frac{s(n-1)}{2l}\bar{c}_1^2.
\end{equation}

Now let us turn to the fractional SU-bundle $E$ determined by Diagram (\ref{fracsunliftdiag}) in Theorem \ref{fracsunthm}. In particular, the SU-structure on $E$ is classified by the pair of maps $(f_2^a, f_2^l)$.
We summarize the formulae for the involved obstructions in the following lemma.
\begin{lemma}\label{obfracu6lemma}
\[
f_2^{a\ast}(c_2-\frac{s(n-1)}{2l}\overline{c}_1^2)=c_2(E)-\frac{s(n-1)}{2l}a^2, \ \ ~  f_2^{l\ast}(c^{\q}_2)=c_2^{l,a}(E). 
\]
Moreover, $c_1(E)=sa$.
\end{lemma}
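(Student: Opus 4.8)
The plan is to obtain Lemma~\ref{obfracu6lemma} as a routine consequence of Theorem~\ref{fracsunthm} --- concretely, of the class identities (\ref{fracsunchernforeq}) --- together with the description of $H^\ast(BU(n)_l)$ recorded in (\ref{hbunleq})--(\ref{fracsuchernreleq}) and the convention $c_1:=s\overline{c}_1$. Nothing delicate is involved; the only point to watch is the deliberately overloaded notation $c_1,\overline{c}_1,c_i,c_i^{\q}$ for classes living on $BU(1)\times BU(n)$, $BU(n)_l$, $BU(n)_{\mathbb{Q}}$ and $BSU(n)_{\mathbb{Q}}$ as they are transported around Diagram~(\ref{fracsunliftdiag}).

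First I would extract from the mere existence of the relative lift $(f_2^a,f_2^l)$ the pullback identities needed. Commutativity of the upper triangle in Diagram~(\ref{fracsunliftdiag}) gives $Bi_{2l}\circ f_2^a\simeq f^a$; combined with (\ref{fracsuchernreleq}) (recall $i_{2l}=(d_l,\rho_s)$ and $f^{a\ast}(g)=a$) this yields $f_2^{a\ast}(\overline{c}_1)=f^{a\ast}(g)=a$, hence $c_1(E)=f^{a\ast}(c_1)=f_2^{a\ast}(s\overline{c}_1)=sa$, which is the last assertion of the lemma; likewise $f_2^{a\ast}(c_i)=f^{a\ast}(c_i)=c_i(E)$ for $i\ge 2$, so in particular $f_2^{a\ast}(c_2)=c_2(E)$. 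Dually, commutativity of the lower triangle gives $Bi_{2\mathbb{Q}}\circ f_2^l\simeq f^l$, and since the universal rational Chern classes of $BU(n)_{\mathbb{Q}}$ restrict to those of $BSU(n)_{\mathbb{Q}}$ in indices $\ge 2$, one gets $f_2^{l\ast}(c_2^{\q})=f^{l\ast}(c_2^{\q})=c_2^{l,a}(E)$, which is already the second displayed identity. (Equivalently, all of this is just a reading of (\ref{fracsunchernforeq}).)

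The first displayed identity is then immediate from multiplicativity of the ring homomorphism $f_2^{a\ast}$:
\[
f_2^{a\ast}\Big(c_2-\frac{s(n-1)}{2l}\overline{c}_1^{2}\Big)=f_2^{a\ast}(c_2)-\frac{s(n-1)}{2l}\big(f_2^{a\ast}(\overline{c}_1)\big)^2=c_2(E)-\frac{s(n-1)}{2l}a^2 .
\]
As a cross-check I would also route the computation through the commuting square $\phi_2\circ f_2^a\simeq f_2^l\circ\pi$ of Diagram~(\ref{fracsunliftdiag}): pulling back (\ref{phi2c2eq}) gives $\pi^\ast(c_2^{l,a}(E))=f_2^{a\ast}\big(c_2-\frac{s(n-1)}{2l}\overline{c}_1^{2}\big)=c_2(E)-\frac{s(n-1)}{2l}a^2$, which agrees with the explicit formula (\ref{12fraccherneq}) once one substitutes $c_1(E)=sa$ there.

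I expect no genuine obstacle: the lemma is essentially a transcription of Theorem~\ref{fracsunthm} into the shape required for the $U\langle 6\rangle$ discussion in the next subsection, and the only care needed is the bookkeeping of the overloaded characteristic-class symbols, together with the (mild but useful) observation that the equality $c_1(E)=sa$, and hence the whole statement, already holds in the integral cohomology of $Y$ and not merely rationally.
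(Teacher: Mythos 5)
Your proposal is correct and follows essentially the same route as the paper: the paper deduces $c_1(E)=sa$ and $f_2^{l\ast}(c_2^{\q})=c_2^{l,a}(E)$ directly from (\ref{fracsunchernforeq}) (i.e.\ Theorem \ref{fracsunthm}) and then obtains the first identity from Diagram (\ref{fracsunliftdiag}) and (\ref{fracsunchernforeq}), which is exactly your unwinding of the triangles plus multiplicativity of $f_2^{a\ast}$. Your extra cross-check via (\ref{phi2c2eq}) and (\ref{12fraccherneq}) is consistent but not needed.
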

\begin{proof}
First $c_1(E)=sa$ by (\ref{fracsunchernforeq}) since $E$ is fractional SU.
$f_2^{l\ast}(c^{\q}_2)=c_2^{l,a}(E)$ is obtained by Theorem \ref{E>=fracunthm}. Then with Diagram (\ref{fracsunliftdiag}) $f_2^{a\ast}(c_2-\frac{s(n-1)}{2l}\overline{c}_1^2)=c_2(E)-\frac{s(n-1)}{2l}a^2$ follows from (\ref{fracsunchernforeq}).
\end{proof}

We are ready to prove Theorem \ref{fracu6nthm}.

\begin{proof}[Proof of Theorem \ref{fracu6nthm}]
By Theorem \ref{fracsunthm} the bottom square in Diagram (\ref{fracu6nliftdiag}) commutes up to homotopy. The top right square in Diagram (\ref{fracu6nliftdiag}) is Diagram (\ref{u6nldiag}) after applying the classifying functor $B$. 

If $E$ is a fractional U$\langle 6\rangle$-bundle, then $c^{l,a}_1(E)=0$ and $c^{l,a}_2(E)=0$. It follows that there exists a map $f_3^l: M\rightarrow BU\langle 6\rangle (n)_{\mathbb{Q}}$ such that $Bi_{3\mathbb{Q}}\circ f_3^l\simeq f_2^l$, and we obtain the front triangle in Diagram (\ref{fracu6nliftdiag}). Then since the top right square in Diagram (\ref{fracu6nliftdiag}) is a homotopy pullback, there exists a unique $f_3^a: Y\rightarrow BU\langle6\rangle (n)_l$ up to homotopy such that $Bi_{3l}\circ f_3^a\simeq f_2^a$ and $\phi_3 \circ f_3^a\simeq f_3^l\circ\pi$. Thus the back triangle and the top left square in Diagram (\ref{fracu6nliftdiag}) commute up to homotopy. We have showed that Diagram (\ref{fracu6nliftdiag}) exists and commutes up to homotopy if $E$ is a fractional U$\langle 6\rangle$-bundle. The proof of the converse statement is similar and omitted. Moreover, (\ref{fracu6nchernforeq}) follows immediately from (\ref{fracsunchernforeq}), (\ref{barc2eq}) and Diagram (\ref{fracu6nliftdiag}).

We are left to count the number of the fractional U$\langle 6\rangle$-structures. Suppose $E$ is a fractional U$\langle 6\rangle$-bundle. From Diagram (\ref{fracsuprindiag}) and the bottom square of Diagram (\ref{fracu6nliftdiag}), by the dual Blakers-Massey theorem \cite[Theorem C.3]{DHH} there is the commutative diagram
\begin{gather}
\begin{aligned}
\xymatrix{
 &
H^3(Y) \ar[r]^<<<<<<{p_2^{a\ast}} &
H^3(P_{U_l}(f_2^a)) \ar[r]^<<<<<<{\delta\circ j_{2}^\ast}&
H^4(BU(n)_l) \ar[r]^<<<<<<{f_2^{a\ast}} &
H^4(Y) \\
0\ar[r]  &
H^3(M) \ar[r]^<<<<<{p_2^{l\ast}}  \ar[u]_{\pi^\ast}&
H^3(P_{SU_{\mathbb{Q}}}(f_2^l)) \ar[r]^<<<<<{\delta\circ j_2^\ast}   \ar[u]_{\Phi_2^\ast} &
H^4(BSU(n)_{\mathbb{Q}}) \ar[r]^<<<<<{f_2^{l\ast}} \ar[u]_{\phi_2^\ast}&
H^4(M)\ar[u]_{\pi^\ast},
}
\end{aligned}
\label{H34fracuprindiag}
\end{gather}
where the second row is exact with the isomorphic transgression $\delta: H^3(SU(n)_\mathbb{Q})\stackrel{}{\rightarrow} H^4(BSU(n)_\mathbb{Q})$, and in the top row the transgression $\delta: H^3(U(n))\stackrel{}{\rightarrow} H^4(BU(n))$ is a monomorphism.
Since $f_2^{l\ast}(c^{\q}_2)=c_2^{l,a}(E)=0$ and $f_2^{a\ast}\circ \phi_2^\ast(c^{\q}_2)=f_2^{a\ast}(c_2-\frac{s(n-1)}{2l}\overline{c}_1^2)=c_2(E)-\frac{s(n-1)}{2l}a^2=0$ by Lemma \ref{obfracu6lemma}, a fractional U$\langle 6\rangle$-structure is determined by the choice of the pair $(P, \Phi_2^\ast(P))\in H^3(P_{SU_{\mathbb{Q}}}(f_2^l))\times H^3(P_{U_l}(f_2^a))$ such that $\delta\circ j_2^\ast(P)=c^{\q}_2$. From Diagram (\ref{H34fracuprindiag}), there are exactly $H^3(M)$ many of such choices. Hence the fractional U$\langle 6\rangle$-structures on $E$ are in one-to-one correspondence with the elements of $H^3(M)$. This completes the proof of the theorem.
\end{proof}
By Theorem \ref{fracu6nthm} $\phi_3$ is a ``classifying space'' of the fractional U$\langle 6\rangle$-bundle $E$ on the level of characteristic classes. Hence, we may call the universal map $\phi_3: BU\langle 6\rangle(n)_l \stackrel{}{\longrightarrow}BU\langle 6\rangle(n)_\mathbb{Q}$ the {\it fractional classifying space} of the fractional U$\langle 6\rangle$-structure, and the fractional U$\langle 6\rangle$-bundle $E$ has the {\it classifying map} $(f^a_3, f^l_3)$.

Geometrically, in Diagram (\ref{fracu6nliftdiag}) the structure group $SU(n)_{\mathbb{Q}}$ of the rational vector bundle determined by $f_2^l$ is lifted to $U\langle 6\rangle(n)_\mathbb{Q}$ through $Bi_{3\mathbb{Q}}$, while the structure group $U(n)$ of the vector bundle $L_a\oplus E$ is lifted to $U\langle 6\rangle(n)_l$ through $Bi_{3l}$. Hence a fractional SU-bundle $E$ admits a fractional U$\langle 6\rangle$-structure if and only if its relative structural group ($U(n)$, $SU(n)_{\mathbb{Q}}$) can be lifted to ($U\langle 6\rangle(n)_l$, $U\langle 6\rangle(n)_\mathbb{Q}$).
In terms of principal bundles, this means that the fractional principal bundle (\ref{fracsuprindiag}) of the fractional U-bundle $E$ can be lifted to the fractional principal bundle 
\begin{gather}
\begin{aligned}
\xymatrix{
U\langle 6\rangle(n)_l \ar[r]^<<<<<<{j_3}  \ar[d]^{\Omega \phi_3} &
P_{U\langle 6\rangle_l}(f_3^a) \ar[r]^<<<<<{p_3^a} \ar[d]^{\Phi_3}&
Y\ar[d]^{\pi}\\
U\langle 6\rangle(n)_\mathbb{Q}  \ar[r]^<<<<<{j_3} &
P_{U\langle 6\rangle_\mathbb{Q} }(f_3^l) \ar[r]^<<<<<{p_3^l}&
M,
}
\end{aligned}
\label{fracu6prindiag}
\end{gather}
where the top row is the principal bundle classified by $f_3^a$, and the bottom row is the principal fibration induced from $f_3^l$.

\begin{remark}\label{u6stringrmk}
The group extension (\ref{u6nexteq}) can be compared with the extension
\begin{equation}\label{stringnexteq}
\{1\}\stackrel{}{\longrightarrow}K(\mathbb{Z},2) \stackrel{}{\longrightarrow}String(2n)\stackrel{\mathfrak{q}_3}{\longrightarrow} Spin(2n)\stackrel{}{\longrightarrow}\{1\},
\end{equation}
where $String(2n)$ is the string group. Indeed, recall there is the standard Lie group homomorphism $r: SU(n)\stackrel{}{\rightarrow}Spin(2n)$ such that $Br^\ast(p_1)=-2c_2$ where $p_1$ is the first universal Pontryagin class. Since $\frac{p_1}{2}\in H^4(BSpin(2n))$ is a generator, we can define $U\langle 6\rangle(n)$ by the pullback of groups
\begin{gather}
\begin{aligned}
\xymatrix{
U\langle 6\rangle(n) \ar[r]^{} \ar[d]^{i_3} &
String(2n) \ar[d]^{\mathfrak{q}_3} \\
SU(n) \ar[r]^<<<<<<{r} &
Spin(2n).
}
\end{aligned}
\label{suspindiag}
\end{gather}
In particular, any group model of $String(2n)$ induces a group structure of $U\langle 6\rangle(n)$ by Diagram (\ref{suspindiag}). Indeed, in the literature there are more elaborated descriptions of string group. For instance, Stolz and Teichner \cite{ST} modelled $String$ as a topological group in terms of group extension by a projective unitary group $PU(A)$ as a model of $K(\mathbb{Z}, 2)$. Moreover, Nikolaus, Sachse and Wockel \cite{NSW} constructed an infinite-dimensional Lie group model for $String$.
\end{remark}

%%%%%%%%%%%%%%%%%%%%%%%%%%%%%%%%%%%%%%%%%%%%%%%%%%%%%%%%%%%%%%%%%%%%%%

\section{Fractional loop structures}
\label{sec: weakfrac}
Motivated by the way that loop spin structures, or weak string structures, can be studied in terms of spin structures on loop spaces, we define fractional loop structures of various levels, which in general are weaker than the corresponding notions introduced in Section \ref{sec: strongfrac}. 
Indeed, we will introduce the fractional loop U-structure and the fractional loop SU-structure as the fractional complex analogues of the loop orientation and loop spin. 
For this purpose, we need to study the involved free suspension to analyze the related loop characteristic classes.
We then introduce and study the two fractional loop structures for general fractional U-bundle after looping, and the constructions and arguments here can be applied to bundle gerbe modules automatically. In particular, we characterize the two structures as lifting problems on constructed fractional classifying spaces and prove Theorem \ref{fracloopunthm} and Theorem \ref{fracloopsunthm}. 
Finally, we compare these two loop structures with their non-loop counterparts in Section \ref{sec: strongfrac} from both the perspective of classifying space and the perspective of free suspension.

%----------------------------------------------------------------------------------------------------------------------------------------------------------
\subsection{The free suspension}
\label{sec: freesus}
To begin with, let us recall the notion of free transgression. Let $X$ be a pointed space. There is the canonical fibration
\begin{equation}\label{generalfreefib}
\Omega X\stackrel{i}{\rightarrow} LX\stackrel{p}{\rightarrow} X.
\end{equation}
We define the free evaluation map
\begin{equation}\label{freeevalu}
{\rm ev}: S^1\times LX \rightarrow X
\end{equation}
by ${\rm ev}((t, \lambda))=\lambda(1)$. 
The \textit{free suspension} (called {\it transgression} by geometers)
\begin{equation}\label{freesuspen}
\nu: H^{n+1}(X) \rightarrow H^{n} (LX)
\end{equation}
is then determined by the formula ${\rm ev}^\ast(x)= 1\otimes p^\ast(x)+ s\otimes \nu(x)$ for any $x\in H^{n+1}(X)$, where $s\in H^1(S^1)$ is a generator.
It is not hard to check that the free suspension is natural and satisfies the following properties (see \cite[Section 3]{Kuri} or \cite[Section $2$]{KK}):
\begin{itemize}  
\item[(1)] $i^\ast\circ \nu =\sigma^\ast: H^{n+1}(X) \rightarrow H^{n} (\Omega X)$;
\item[(2)] $\nu (xy)= \nu(x)p^\ast(y)+(-1)^{|x|}\nu(y)p^\ast(x)$, for any $x$ and $y\in H^{n+1}(X)$,
\end{itemize}
where $\sigma^\ast$ is the classical cohomology suspension (see \cite[Appendix B]{DHH}). The Property (2) means that $\nu$ is a module derivation under $p^\ast$. Since $p^\ast$ is always injective, we may omit it and simply write $\nu(x)y$ for $\nu(x)p^\ast(y)$, etc.

In order to compute the free suspension for $BU(n)$, let us recall the corresponding result for $BSpin^c(2n)$ studied in \cite[Section 2]{DHH}. First we have
\[
H^{\leq 4}(BSpin^c(2n))\cong \mathbb{Z}^{\leq 4}[t, q_1], \ \  ~ H^{\leq 3}(BLSpin^c(2n))\cong \mathbb{Z}^{\leq 3}[s,t,\mu]
\]
where ${\rm deg}(t)=2$ with $p^\ast(t)=t$ by abuse of notation, ${\rm deg}(q_1)=4$ is the first universal spin$^c$ class, and $\sigma^\ast(t)=i^\ast(s)$, $\sigma^\ast(q_1)=i^\ast(\mu)$. In particular, ${\rm deg}(s)=1$ and ${\rm deg}(\mu)=3$. Moreover, by \cite[Lemma 2.1]{DHH}
\begin{equation}\label{spinnueq}
\nu(t)=s, \ \ \nu(q_1)=\mu-st, \ \  \nu(t^2)=2st.
\end{equation}
Now let us turn to $BU(n)$. Recall there is the standard inclusion of Lie groups $r: U(n)\stackrel{}{\rightarrow} Spin^c(2n)$ such that 
\[
Br^\ast(t)=c_1, \ \ Br^\ast(p_1)=c_1^2-2c_2.
\]
Moreover, since by \cite{Duan} $2q_1+t^2=p_1\in H^\ast(BSpin^c(n))$ we see that $Br^\ast(q_1)=-c_2$.
For the free loop group, by an augment of Serre spectral sequence it is clear that
\begin{equation}\label{h3bluneq}
H^{\leq 4}(BLU(n))\cong \mathbb{Z}^{\leq 4}[z_1,c_1,z_2,c_2]
\end{equation}
where $p^\ast(c_i)=c_i$ with $i=1$ or $2$ by abuse of notation, and $\sigma^\ast(c_1)=i^\ast(z_1)$ and $\sigma^\ast(c_2)=i^\ast(z_2)$. In particular, ${\rm deg}(z_1)=1$ and ${\rm deg}(z_2)=3$. The classes $z_1$, $c_1$, $z_2$ and $c_2$ are the {\it universal loop $U(n)$-classes} in the low degrees.

\begin{lemma}\label{tranlemma}
The free suspension for $BU(n)$ satisfies
\[
\nu(c_1)=z_1, \ \ \nu(c_2)=z_2+z_1c_1, \ \  \nu(c_1^2)=2z_1c_1.
\]
\end{lemma}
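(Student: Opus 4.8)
The plan is to deduce Lemma~\ref{tranlemma} from the corresponding formulae for $BSpin^c(2n)$ recorded in~(\ref{spinnueq}), using the naturality of the free suspension $\nu$ together with the explicit pullback formulae for the map $Br: BU(n)\to BSpin^c(2n)$. The key structural input is that $\nu$ is natural in the space, so that $Br^\ast$ (on $X$) and $(LBr)^\ast$ (on $LX$) intertwine the two free suspensions; since $(LBr)^\ast$ sends the universal loop $Spin^c$-classes $s,t,\mu$ to the universal loop $U(n)$-classes and $Br^\ast$ sends $t,q_1$ to $c_1,-c_2$, the three identities in~(\ref{spinnueq}) transport directly.

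First I would record the effect of $(LBr)^\ast$ on the low-degree generators of $H^\ast(BLSpin^c(2n))$. Since $p^\ast$ commutes with looping and $Br^\ast(t)=c_1$, we get $(LBr)^\ast(t)=c_1$ (abusing notation as the paper does), and from $\sigma^\ast(t)=i^\ast(s)$, $\sigma^\ast(c_1)=i^\ast(z_1)$ together with Property~(1) relating $i^\ast\nu$ and $\sigma^\ast$, one identifies $(LBr)^\ast(s)=z_1$; similarly $(LBr)^\ast(\mu)=z_2$ in degree $3$ (here I use that $H^{\leq 3}(BLU(n))$ is the polynomial ring displayed in~(\ref{h3bluneq}), so the class is pinned down by its image under $i^\ast$ and $p^\ast$). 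Then the first formula follows: $\nu(c_1)=\nu(Br^\ast t)=(LBr)^\ast\nu(t)=(LBr)^\ast(s)=z_1$. The second: applying $(LBr)^\ast$ to $\nu(q_1)=\mu-st$ gives $(LBr)^\ast\nu(q_1)=z_2-z_1c_1$; on the other hand $Br^\ast(q_1)=-c_2$, so $(LBr)^\ast\nu(q_1)=\nu(Br^\ast q_1)=-\nu(c_2)$, whence $\nu(c_2)=z_2+z_1c_1$. The third identity is purely formal: by the derivation Property~(2), $\nu(c_1^2)=2\,\nu(c_1)\,p^\ast(c_1)=2z_1c_1$ (omitting $p^\ast$ as agreed), so it does not even need the $Spin^c$ comparison.

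The main obstacle — really the only subtle point — is justifying the identifications $(LBr)^\ast(s)=z_1$ and $(LBr)^\ast(\mu)=z_2$, i.e.\ that the looped map behaves as expected on the \emph{odd-degree} loop classes, not just on the pulled-back even classes. I would handle this by invoking naturality of the fibration~(\ref{generalfreefib}) under $Br$: there is a map of fibrations $\Omega(Br):\Omega BU(n)\to\Omega BSpin^c(2n)$ covering $LBr$ over $Br$, and $\Omega(Br)^\ast$ on $H^\ast(\Omega BSpin^c(2n))$ sends $i^\ast(s)=\sigma^\ast(t)$ to $\sigma^\ast(Br^\ast t)=\sigma^\ast(c_1)=i^\ast(z_1)$ by naturality of the cohomology suspension $\sigma^\ast$; combined with the injectivity of the relevant restriction/edge maps in the Serre spectral sequence for~(\ref{generalfreefib}) (the same spectral-sequence argument that established~(\ref{h3bluneq})), this forces $(LBr)^\ast(s)=z_1$, and the degree-$3$ case $(LBr)^\ast(\mu)=z_2$ is identical once one also checks the image has no $p^\ast$-component, which follows because $\mu$ itself restricts trivially along $p^\ast$ in the relevant range. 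With these identifications in hand the lemma is immediate, so I expect the write-up to be short, with essentially all the content in setting up the naturality square and quoting~(\ref{spinnueq}).
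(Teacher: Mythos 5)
Your strategy is exactly the paper's: use naturality of the free suspension along $Br\colon BU(n)\to BSpin^c(2n)$ to transport the $BSpin^c$ formulae (\ref{spinnueq}), together with $Br^\ast(t)=c_1$, $Br^\ast(q_1)=-c_2$, and the derivation property for $\nu(c_1^2)$. However, your key identification in degree $3$ has a sign error, and the deduction you draw from it is not algebraically consistent with it. By your own ``identical'' naturality argument (which is also the paper's), the fibre restriction of $BLr^\ast(\mu)$ is $\sigma^\ast(Br^\ast q_1)=\sigma^\ast(-c_2)=-i^\ast(z_2)$, so the correct identification is $BLr^\ast(\mu)=-z_2$, not $+z_2$; this is precisely what the paper records. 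Moreover, from your claimed value you write $-\nu(c_2)=(LBr)^\ast(\mu-st)=z_2-z_1c_1$ and then conclude $\nu(c_2)=z_2+z_1c_1$, which does not follow (it would give $\nu(c_2)=-z_2+z_1c_1$, contradicting the lemma). The two slips cancel, so your final formula is the right one, but the middle of the argument as written is wrong; with $BLr^\ast(\mu)=-z_2$ one gets $\nu(-c_2)=BLr^\ast(\nu(q_1))=BLr^\ast(\mu-st)=-z_2-z_1c_1$, which yields the lemma cleanly, exactly as in the paper's proof.

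A secondary, smaller point: your justification that $(LBr)^\ast(\mu)$ has ``no $p^\ast$-component'' does not address the real ambiguity. In degree $3$ the group $H^3(BLU(n))$ is spanned by $z_2$ and the mixed class $z_1c_1$, and $z_1c_1$ also dies under $i^\ast$, so the restriction to the fibre determines $(LBr)^\ast(\mu)$ only modulo $z_1c_1$. The paper disposes of this by appealing to the choice of generators ($s$, $t$, $\mu$ from \cite{DHH} and $z_1$, $z_2$ from (\ref{h3bluneq})); if you want your write-up to be self-contained you should either do the same or pin down the $z_1c_1$-coefficient explicitly, since naturality of $\sigma^\ast$ alone does not see it.
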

\begin{proof}
The formulae $\nu(c_1)=z_1$ and $\nu(c_1^2)=2z_1c_1$ can be obtained easily from Property (1) and (2) of $\nu$, or by the following similar argument for $\nu(c_2)$. Indeed, since $\nu$ is natural we have the commutative diagram
\begin{gather}
\begin{aligned}
\xymatrix{
H^{4}(BSpin^c(2n)) \ar[r]^{\nu} \ar[d]^{Br^\ast} &
H^{3}(BLSpin^c(2n)) \ar[d]^{BLr^\ast}\\
H^{4}(BU(n))\ar[r]^{\nu} &
H^{3}(BLU(n)).
}
\end{aligned}
\label{nuspincudiag}
\end{gather}
It is easy to check that by our choice of generators $BLr^\ast(s)=z_1$, $BLr^\ast(t)=c_1$.
Also since $\sigma^\ast$ is natural we see that for $\mu\in H^3(\Omega BSpin^c(2n)\simeq Spin^c(2n))$
\[
r^\ast (\mu)=r^\ast\circ \sigma^\ast(q_1)=\sigma^\ast\circ Br^\ast (q_1)=\sigma^\ast(-c_2)=-z_2.
\]
Again by our choice of generators it implies that $BLr^\ast(\mu)=-z_2$. Hence by Diagram (\ref{nuspincudiag}) and (\ref{spinnueq})
\[
\nu(-c_2)=\nu \circ Br^\ast(q_1)=BLr^\ast \circ \nu(q_1)=BLr^\ast(\mu-st)=-z_2-z_1c_1.
\]
This proves the lemma.
\end{proof}
Let us consider the free transgression for $BLU(n)_l$ in (\ref{Unlisoeq}) and (\ref{unextdiag}).
From (\ref{hbunleq}) and (\ref{fracsuchernreleq}) it is easy to check that 
\begin{equation}\label{h3blunleq}
H^{\leq 4}(BLU(n)_l)\cong \mathbb{Z}^{\leq 4}[\overline{z}_1,\overline{c}_1,z_2,c_2],
\end{equation}
such that 
\begin{equation}\label{blrhosclasseq}
\begin{split}
&(BLd_l)^\ast(g)=\overline{c}_1, (BLd_l)^\ast(h)=\overline{z}_1, (BL\rho_s)^\ast(z_1)=s\overline{z}_1, \\
&(BL\rho_s)^\ast(c_1)=s\overline{c}_1, (BL\rho_s)^\ast(z_2)=z_2,(BL\rho_s)^\ast(c_2)=c_2.
\end{split}
\end{equation}
\begin{corollary}\label{tranllemma}
The free suspension for $BU(n)_l$ satisfies
\[
\nu(\overline{c}_1)=\overline{z}_1, \ \ \nu(c_2)=z_2+s^2\overline{z}_1\overline{c}_1, \ \  \nu(\overline{c}_1^2)=2\overline{z}_1\overline{c}_1.
\]
\end{corollary}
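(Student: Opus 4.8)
The plan is to deduce all three formulae from Lemma \ref{tranlemma} — together with the identity $\nu(g)=h$ on $BLU(1)$ recorded earlier — by naturality of the free suspension, using the Lie group homomorphisms $\rho_s\colon U(n)_l\to U(n)$ and $d_l\colon U(n)_l\to U(1)$ of Diagram (\ref{unextdiag}) and the class identities in (\ref{fracsuchernreleq}) and (\ref{blrhosclasseq}).

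First I would handle $\nu(\overline{c}_1)$. Since $\overline{c}_1=(Bd_l)^\ast(g)$ by (\ref{fracsuchernreleq}) and $\nu(g)=h$, naturality of $\nu$ along $Bd_l$ gives
\[
\nu(\overline{c}_1)=\nu\big((Bd_l)^\ast(g)\big)=(BLd_l)^\ast\big(\nu(g)\big)=(BLd_l)^\ast(h)=\overline{z}_1,
\]
the last equality being (\ref{blrhosclasseq}). The identity $\nu(\overline{c}_1^2)=2\overline{z}_1\overline{c}_1$ then follows immediately from the derivation property (2) of $\nu$ (with $p^\ast$ suppressed as in the conventions), since $\nu(\overline{c}_1^2)=2\nu(\overline{c}_1)\overline{c}_1$.

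Next I would handle $\nu(c_2)$, this time via naturality along $B\rho_s\colon BU(n)_l\to BU(n)$. By (\ref{fracsuchernreleq}) the class $c_2\in H^4(BU(n)_l)$ is the image under $(B\rho_s)^\ast$ of the universal class $c_2\in H^4(BU(n))$, so combining the commuting square for $\nu$ and $B\rho_s$ with Lemma \ref{tranlemma} and the loop-level identities $(BL\rho_s)^\ast(z_1)=s\overline{z}_1$, $(BL\rho_s)^\ast(c_1)=s\overline{c}_1$, $(BL\rho_s)^\ast(z_2)=z_2$ of (\ref{blrhosclasseq}) yields
\[
\nu(c_2)=(BL\rho_s)^\ast\big(\nu(c_2)\big)=(BL\rho_s)^\ast\big(z_2+z_1c_1\big)=z_2+(s\overline{z}_1)(s\overline{c}_1)=z_2+s^2\overline{z}_1\overline{c}_1.
\]
There is no real obstacle here; the only care needed is to keep straight the abuse of notation, distinguishing the universal loop classes $z_1,c_1,z_2,c_2$ of $BLU(n)$ from their same-named counterparts $\overline{z}_1,\overline{c}_1,z_2,c_2$ of $BLU(n)_l$ — which is exactly what (\ref{blrhosclasseq}) encodes. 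Alternatively $\nu(\overline{c}_1)=\overline{z}_1$ can be obtained by applying $(BL\rho_s)^\ast$ to $\nu(c_1)=z_1$ and cancelling the common factor $s$, legitimate since $H^1(BLU(n)_l)\cong\mathbb{Z}\{\overline{z}_1\}$ is torsion-free by (\ref{h3blunleq}).
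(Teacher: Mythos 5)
Your proposal is correct and takes essentially the same route as the paper: the paper's proof also invokes naturality of the free suspension (its displayed square is the one for $B\rho_s$) together with Lemma \ref{tranlemma} and the class identities (\ref{blrhosclasseq}). Your write-up simply makes explicit what the paper leaves terse — obtaining $\nu(\overline{c}_1)=\overline{z}_1$ via $Bd_l$ and $\nu(g)=h$ (or by cancelling $s$, legitimate by torsion-freeness), and $\nu(\overline{c}_1^2)$ from the derivation property — all within the ingredients the paper cites.
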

\begin{proof}
By the naturality of free transgression, there is the commutative diagram
\begin{gather*}
\begin{aligned}
\xymatrix{
H^{\ast}(BU(n)) \ar[r]^<<<<<{\nu} \ar[d]^{B\rho_s^\ast} &
H^{\ast-1}(BLU(n)) \ar[d]^{BL\rho_s^\ast}\\
H^{\ast}(BU(n)_l)\ar[r]^<<<<<{\nu} &
H^{\ast-1}(BLU(n)_l).
}
\end{aligned}
\label{nuuuldiag}
\end{gather*}
Together with Lemma \ref{tranlemma} and (\ref{blrhosclasseq}) the corollary follows. 
\end{proof}
%----------------------------------------------------------------------------------------------------------------------------------------------------------
\subsection{Fractional loop U-structure} \label{sec: fracloopu}
Let $\pi: Y\rightarrow M$ be a map. Let $L_a$ be a complex line bundle determined by $a\in H^2(Y;\mathbb{Z})$ and $E$ be a complex vector bundle of rank $n$ classified by $f: Y\rightarrow BU(n)$. 
Suppose $E$ admits an $(a,\frac{1}{l})$-fractional $U(n)$-structure determined by Diagram (\ref{fldefdiag}). Recall $E$ has the fractional Chern class $c_k^{l,a}(E)=f^{l\ast}(c_k^{\q})$ for each $1\leq k\leq n$. By applying the free loop functor $L$ to Diagram (\ref{fracuprindiag}) and Diagram (\ref{fldefdiag}), we obtain the morphism of loop fibrations
\begin{gather}
\begin{aligned}
\xymatrix{
LU(1)\times LU(n) \ar[r]^<<<<<{Lj}  \ar[d]^{\Omega L\phi} &
LP_U(f^a) \ar[r]^<<<<<{Lp^a} \ar[d]^{L\Phi}&
LY\ar[d]^{L\pi}\\
LU(n)_{\mathbb{Q}} \ar[r]^{Lj} &
LP_{U_{\mathbb{Q}}}(f^l) \ar[r]^<<<<<{Lp^l}&
LM,
}
\end{aligned}
\label{Lfracuprindiag}
\end{gather}
classified by 
\begin{gather}
\begin{aligned}
\xymatrix{
LY \ar[d]^{L\pi} \ar[r]^<<<<<{Lf^a} & 
BLU(1)\times BLU(n) \ar[d]^{L\phi} \\
LM\ar[r]^<<<<<<<<{Lf^l} &
BLU(n)_\mathbb{Q},
}
\end{aligned}
\label{Lfldefdiag}
\end{gather}
where we have used the fact that $BLG\simeq LBG$ for any topological group $G$.
We may call Diagram (\ref{Lfracuprindiag}) the {\it fractional loop principal bundle} of $LE$ with the {\it relative structure group} $(LU(1)\times LU(n), LU(n)_{\mathbb{Q}})$ and the {\it classifying map} $(Lf^a, Lf^l)$ mapping to the {\it fractional loop classifying space} $L\phi$. 

By (\ref{h3bluneq}), we have
\begin{equation}\label{h3blunqeq}
H^{\leq 4}(BLU(n);\mathbb{Q})\cong \mathbb{Q}^{\leq 4}[z^{\q}_1,c^{\q}_1,z^{\q}_2, c^{\q}_2],
\end{equation}
where as before we use $x^{\q}$ to denote its counterpart $x$ in the rational cohomology.
From (\ref{h3blunqeq}) we can define the {\it fractional loop U-classes} of $LE$ by
\begin{equation}\label{fracloopuclasseq}
z_i^{l,a}(LE)=Lf^{l\ast}(\widetilde{z}_i), \ \ ~ c_i^{l,a}(LE)=Lf^{l\ast}(c_i^{\q}), 
\end{equation}
for $i=1$ or $2$.
Moreover, by (\ref{h3bluneq}) we have the {\it loop U-classes}
\begin{equation}\label{loopunclasseq}
z_i(LE)=Lf^{a\ast}(z_1), \ \ ~ c_i(LE)=Lf^{a\ast}(c_1),
\end{equation}
for $i=1$ or $2$.
Then by the naturality of the free suspension and Lemma \ref{tranlemma}, we have that
\begin{equation}\label{Etran1eq}
\begin{split}
& \nu(c_1^{l,a}(E))=z_1^{l,a}(LE), \ \ ~ \nu((c_1^{l,a}(E))^2)=2z_1^{l,a}(LE)c_1^{l,a}(LE),  \\
&\nu(c_2^{l,a}(E))=z_2^{l,a}(LE)+z_1^{l,a}(LE)c_1^{l,a}(LE),
\end{split}
\end{equation}
and 
\begin{equation}\label{Etran2eq}
\begin{split}
&\nu(c_1(E))=z_1(LE), \ \ ~  \nu(c_1^2(E))=2z_1(LE)c_1(LE), \\
&\nu(c_2(E))=z_2(LE)+z_1(LE)c_1(LE).
\end{split}
\end{equation}

\begin{definition}\label{fracloopundef}
Let $E$ be an $(a,\frac{1}{l})$-fractional $U(n)$-bundle as above. $E$ has an {\it $(a, \frac{1}{l})$-fractional loop $U(n)$-structure}, or simply a {\it fractional loop U-structure} if $z_1^{l,a}(LE)=0$.
\end{definition}

Let us study the universal case. Recall that $LU(1)\cong U(1)\times \Omega S^1\simeq U(1)\times \mathbb{Z}$ as groups, and then $LU(n)\cong LSU(n)\times U(1)\times \Omega S^1\simeq LSU(n)\times U(1)\times \mathbb{Z}$ as spaces. Define the group homomorphism
\begin{equation}\label{thetaseq}
\theta: LU(1)\stackrel{}{\longrightarrow} \Omega S^1
\end{equation}
by $\theta(z(t))=\frac{z(t)}{z(1)}$.
Consider the two group homomorphisms
\[
\theta_s: LU(1)\stackrel{L\tau_s}{\longrightarrow}LU(1)\stackrel{\theta}{\longrightarrow} \Omega S^1, \ \ \
\varepsilon: LU(n) \stackrel{L{\rm det}}{\longrightarrow} LU(1)\stackrel{\theta}{\longrightarrow} \Omega S^1.
\]
Then $\theta_s(z(t))=\big(\frac{z(t)}{z(1)}\big)^s$ and $\varepsilon(A(t))=\frac{{\rm det}(A(t))}{{\rm det}(A(1))}$. Up to homotopy $\theta_s$ can be identified with the composition
\[
U(1)\times \mathbb{Z}\stackrel{q_2}{\longrightarrow}\mathbb{Z}\stackrel{\times s}{\longrightarrow}\mathbb{Z},
\]
where $q_2$ is the projection onto the second direct summand. 

Let $\overline{LU}(n)_l$ be the group defined by pullback diagram
\begin{gather}
\begin{aligned}
\xymatrix{
\overline{LU}(n)_l \ar[r]^{\overline{\varepsilon}} \ar[d]^{\overline\rho_s} &
\Omega S^1 \ar[d]^{\Omega\tau_s} \\
LU(n) \ar[r]^{\varepsilon} &
\Omega S^1.
}
\end{aligned}
\label{Lunpulldiag}
\end{gather}
If we view the components of $LU(n)$ are indexed by $\mathbb{Z}$ under the isomorphism $\pi_0(LU(n))\cong\mathbb{Z}$,
it is not hard to see that $\overline{LU}(n)_l$ is the subgroup of $LU(n)$ consisting of all the components indexed in $s\mathbb{Z}$.
Define $\kappa_l$ as the composition
\begin{equation}\label{kappaldef}
\kappa_l: U(1)\times \overline{LU}(n)_l \stackrel{{\rm id}\times \overline{\varepsilon}}{\longrightarrow} U(1)\times \Omega S^1\stackrel{\cong}{\longrightarrow} LU(1),
\end{equation}
and $\psi_s$ as the composition
\begin{equation}\label{psisdef}
\psi_s: U(1)\times \overline{LU}(n)_l \stackrel{p_2}{\longrightarrow} \overline{LU}(n)_l \stackrel{\overline{\rho}_s}{\longrightarrow} LU(n),
\end{equation}
where $p_2$ is the projection onto the second factor. 
It follows that there is the morphism of (homotopy) group extensions 
\begin{gather}
\begin{aligned}
\xymatrix{
\{1\} \ar[r] &
 L_0U(n)\ar@{=}[d] \ar[r]^<<<<{\iota_{2l}} &
 U(1)\times \overline{LU}(n)_l \ar[d]^{\psi_s} \ar[r]^<<<<{\kappa_l} &
 LU(1)\ar[d]^{\theta_s} \ar[r] &
 \{1\} \\
  \{1\} \ar[r]&
 L_0U(n) \ar[r]^{\iota_{2}} &
 LU(n)  \ar[r]^<<<<<<<<<{\varepsilon} &
 \Omega S^1 \ar[r] &
 \{1\}, 
}
\end{aligned}
\label{u1lunextdiag}
\end{gather}
where $L_0U(n)$ denotes the component of $LU(n)$ corresponding to $0\in \mathbb{Z}\cong\pi_0(LU(n))$, $\iota_{2}$ is the obvious inclusion, and $\iota_{2l}$ is the induced map. The right square of Diagram (\ref{u1lunextdiag}) is a pullback and determines the group extension 
\begin{equation}\label{lu1unexteq2}
\{1\}\stackrel{}{\longrightarrow} U(1)\times \overline{LU}(n)_l\stackrel{(\kappa_l,\psi_s)}{\longrightarrow} LU(1)\times LU(n)\stackrel{\theta_{-s}\times \varepsilon}{\longrightarrow} \Omega S^1\stackrel{}{\longrightarrow}\{1\}.
\end{equation}

By our convention of notation we may denote 
\begin{equation}\label{Hblu1eq}
H^\ast(BLU(1))\cong \mathbb{Z}[g]\{h\}
\end{equation} 
where $g$ and $h$ are the generators corresponding to $g\in H^2(BU(1))$ and $h\in H^1(B\Omega S^1\simeq S^1)$ respectively. Moreover, Diagram (\ref{Lunpulldiag}) gives the finite covering
\[
\mathbb{Z}/s\mathbb{Z}\stackrel{}{\longrightarrow} B\overline{LU}(n)_l \stackrel{B\overline{\rho}_s}{\longrightarrow} BLU(n).
\]
It follows that there is a class $\overline{z}_1\in H^1(B\overline{LU}(n)_l)$ such that 
\begin{equation}\label{barz1defeq}
(B\overline{\varepsilon})^\ast(h)=\overline{z}_1, (B\overline{\rho}_s)^\ast(z_1)=s\overline{z}_1.
\end{equation}
With (\ref{h3bluneq}) denote $z_2:=(B\overline{\rho}_s)^\ast(z_2)$ and $c_i:=(B\overline{\rho}_s)^\ast(c_i)$ for $i=1$, $2$.
Consider Diagram (\ref{u1lunextdiag}) after applying the classifying functor $B$. 
By the above argument, we have 
\begin{equation}\label{fracloopuchernreleq}
\begin{split}
&B\kappa_l^\ast (h)=\overline{z}_1, \ \ B\kappa_l^\ast(g)=g, \ \   \\
&B\psi_s^\ast (c_1)=c_1, \ \ ~B\psi_s^\ast(z_1)=s\overline{z}_1, \ \ ~ \\
&B\psi_s^\ast (c_2)=c_2, \ \ ~ B\psi_s^\ast (z_2)=z_2, \\
& B\varepsilon^\ast(h)=z_1, \ \ ~ B(\theta_{-s}\times \varepsilon)^\ast(h)=z_1-sh.
\end{split}
\end{equation}
Since the transgression $\nu$ is natural, by Lemma \ref{tranlemma} and (\ref{uniphiclasseq}) we have 
\[
L\phi^\ast(z^{\q}_1)=L\phi^\ast \circ \nu (c^{\q}_1)=\nu\circ \phi^\ast(c^{\q}_1)=\nu(c_1-sg)=z_1-sh.
\]
Hence there is the homotopy commutative diagram of fibrations
\begin{gather}
\begin{aligned}
\xymatrix{
BU(1)\times B\overline{LU}(n)_l \ar[r]^<<<<<{B(\kappa_l,\psi_s)} \ar[d]^{\xi_2}&
BLU(1)\times BLU(n) \ar[r]^<<<<<{B(\theta_{-s}\times \varepsilon)} \ar[d]^{L\phi}&
S^1 \ar[d]^{r}\\
BL_0U(n)_{\mathbb{Q}} \ar[r]^{B\iota_{2\mathbb{Q}}} &
BLU(n)_{\mathbb{Q}}  \ar[r]^<<<<<<<{z^{\q}_1}&
S^1_{\mathbb{Q}},
}
\end{aligned}
\label{lunifracsudiag}
\end{gather}
where the top row is obtained from (\ref{lu1unexteq2}) by taking the classifying functor $B$, $z^{\q}_1$ is represented by $B\varepsilon_\mathbb{Q}$, and $\xi_2$ is induced from $L\phi$. 
Indeed, let $\phi_{\mathbb{Q}}: BU(1)_{\mathbb{Q}}\times BU(n)_{\mathbb{Q}}\stackrel{}{\rightarrow} BU(n)_{\mathbb{Q}}$ be the rationalization of $\phi$.
The homotopy pullback of $L\phi_{\mathbb{Q}}$ along $B\iota_{2\mathbb{Q}}$ gives a map $\widetilde{\xi}_2: BU(1)_{\mathbb{Q}}\times B\overline{LU}(n)_{l\mathbb{Q}}\stackrel{}{\rightarrow} BL_0U(n)_{\mathbb{Q}}$. Then $\xi_2$ can be chosen to be the composition
\begin{equation}\label{xi2def}
\xi_2: BU(1)\times B\overline{LU}(n)_{l}\stackrel{r}{\longrightarrow}BU(1)_{\mathbb{Q}}\times B\overline{LU}(n)_{l\mathbb{Q}}\stackrel{\widetilde{\xi}_2}{\longrightarrow} BL_0U(n)_{\mathbb{Q}}, 
\end{equation}
where $r$ is the rationalization. 

Now let us turn to the fractional loop bundle $LE$ determined by Diagram (\ref{fldefdiag}).
Besides the loop $U(n)$-classes in (\ref{loopunclasseq}), by (\ref{Hblu1eq}) we have the class
$a\in H^2(LY)$ corresponding to the class $a\in H^2(Y)$ by abuse of notation such that $Lf^{a\ast}(g)=a$, and the class $\mathfrak{a}\in H^1(LY)$ such that $Lf^{a\ast}(h)=\mathfrak{a}$ and $\nu(a)=\mathfrak{a}$.
We summarize the formulae for the involved obstructions in the following lemma.
\begin{lemma}\label{obfracloopulemma}
\[
Lf^{a\ast}(z_1-sh)=z_1(LE)-s\mathfrak{a}, \ \ ~  Lf^{l\ast}(z^{\q}_1)=z_1^{l,a}(LE). 
\]
\end{lemma}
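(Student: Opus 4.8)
The plan is to obtain both identities by unwinding the definitions of the relevant loop characteristic classes; naturality of the free suspension $\nu$ applied to Lemma \ref{obfracsulemma} gives an equivalent route and serves as a cross-check.

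The second identity is essentially definitional: by (\ref{fracloopuclasseq}) the fractional loop class $z_1^{l,a}(LE)$ is defined to be $Lf^{l\ast}(z^{\q}_1)$. For a proof that does not merely quote this, one applies $\nu$ to the non-loop identity $f^{l\ast}(c^{\q}_1)=c_1^{l,a}(E)$ of Lemma \ref{obfracsulemma}; the naturality square $\nu\circ f^{l\ast}=Lf^{l\ast}\circ\nu$, together with $\nu(c^{\q}_1)=z^{\q}_1$ (the rational analogue of Lemma \ref{tranlemma}) and $\nu(c_1^{l,a}(E))=z_1^{l,a}(LE)$ from (\ref{Etran1eq}), gives the claim.

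For the first identity I would argue directly. By (\ref{loopunclasseq}) we have $z_1(LE)=Lf^{a\ast}(z_1)$, and by construction $Lf^{a\ast}(h)=\mathfrak{a}$; since $Lf^{a\ast}$ is a ring homomorphism and $s$ is an integer, $Lf^{a\ast}(z_1-sh)=Lf^{a\ast}(z_1)-s\,Lf^{a\ast}(h)=z_1(LE)-s\mathfrak{a}$. Equivalently, one applies $\nu$ to the identity $f^{a\ast}(c_1-sg)=c_1(E)-sa$ of Lemma \ref{obfracsulemma}: additivity of $\nu$ together with Lemma \ref{tranlemma} and $\nu(g)=h$ gives $\nu(c_1-sg)=z_1-sh$ on $BLU(1)\times BLU(n)$, while (\ref{Etran2eq}) together with $\nu(a)=\mathfrak{a}$ gives $\nu(c_1(E)-sa)=z_1(LE)-s\mathfrak{a}$, and naturality of $\nu$ then finishes the argument.

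The lemma is purely a matter of unwinding definitions and applying naturality of $\nu$, so there is no genuine obstacle; the only point requiring care is keeping consistent track of which universal loop class each symbol denotes and of its designated lifting through $Lf^a$ or $Lf^l$. As a sanity check, the two formulae are mutually compatible: combining them with the looped diagram (\ref{Lfldefdiag}) and the identity $L\phi^\ast(z^{\q}_1)=z_1-sh$ (established just before Diagram (\ref{lunifracsudiag})) gives $L\pi^\ast(z_1^{l,a}(LE))=L\pi^\ast Lf^{l\ast}(z^{\q}_1)=Lf^{a\ast}L\phi^\ast(z^{\q}_1)=Lf^{a\ast}(z_1-sh)=z_1(LE)-s\mathfrak{a}$, as it must.
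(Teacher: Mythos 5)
Your proof is correct and follows essentially the same route as the paper: the second identity is just definition (\ref{fracloopuclasseq}), and the first follows from (\ref{loopunclasseq}) together with $Lf^{a\ast}(h)=\mathfrak{a}$ and linearity of the pullback. The extra derivations via naturality of $\nu$ and the $L\pi^\ast$ sanity check are consistent but not needed.
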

\begin{proof}
$Lf^{a\ast}(z_1-sh)=z_1(LE)-s\mathfrak{a}$ follows from (\ref{loopunclasseq}) and $Lf^{a\ast}(h)=\mathfrak{a}$.
$Lf^{l\ast}(z^{\q}_1)=z_1^{l,a}(LE)$ by definition (\ref{fracloopuclasseq}). 
\end{proof}

We are ready to prove Theorem \ref{fracloopunthm}.
\begin{proof}[Proof of Theorem \ref{fracloopunthm}]
The bottom square in Diagram (\ref{fracloopunliftdiag}) is Diagram (\ref{Lfldefdiag}). The top right square in Diagram (\ref{fracloopunliftdiag}) is the left square in Diagram (\ref{lunifracsudiag}). 

If $E$ is a fractional loop U-bundle, then $z^{l,a}_1(LE)=0$, and hence by (\ref{Etran1eq}), (\ref{Etran2eq}) and (\ref{12fraccherneq})
\[
L\pi^\ast(z^{l,a}_1(LE))=L\pi^\ast \circ \nu (c^{L,a}_1(E))= \nu \circ \pi^\ast(c^{L,a}_1(E))=\nu(c_1(E)-sa)=z_1(LE)-s\mathfrak{a}=0.
\]
With Diagram (\ref{lunifracsudiag}), this is equivalent to that both the compositions $z^{\q}_1\circ Lf^l$ and $B(\theta_{-s}\times \varepsilon)\circ Lf^a$ are null homotopic. It follows that there exist $g^a_2: LY\rightarrow BU(1)\times BLU(n)$ and $g^l_2: B\rightarrow BL_0U(n)_{\mathbb{Q}}$ such that $B(\kappa_l, \psi_s)\circ g^a_2\simeq L f^a$ and $B\iota_{2\mathbb{Q}}\circ g^l_2\simeq Lf^l$, in other words, the front and back triangles in Diagram (\ref{fracloopunliftdiag}) commute up to homotopy. 

In particular,
\begin{equation}\label{loopupfeq1}
B\iota_{2\mathbb{Q}}\circ \xi_2\circ g^a_2\simeq 
L\phi\circ B(\kappa_l, \psi_s)\circ g^a_2\simeq L\phi\circ Lf^a\simeq Lf^l\circ L\pi\simeq B\iota_{2\mathbb{Q}}\circ g_2^l\circ L\pi.
\end{equation}
On the other hand, it is clear that $BLU(n)\simeq BL_0U(n) \times U(1)$ such that $B\iota_{2\mathbb{Q}}$ admits a left homotopy inverse $\mathfrak{q}$. It follows from (\ref{loopupfeq1}) that
\[ 
\xi_2\circ g^a_2\simeq \mathfrak{q}\circ B\iota_{2\mathbb{Q}}\circ\xi_2\circ g^a_2\simeq \mathfrak{q}\circ B\iota_{2\mathbb{Q}}\circ g_2^l\circ L\pi\simeq g_2^l\circ L\pi,
\]
that is, the left top square commutes up to homotopy. Hence we have showed that Diagram (\ref{fracloopunliftdiag}) exists and commutes up to homotopy if $E$ is a fractional loop U-bundle. The proof of the converse statement is similar and omitted. Moreover, by the homotopy commutativity of the back triangle in Diagram (\ref{fracloopunliftdiag}), (\ref{fracloopunchernforeq}) follows immediately from (\ref{fracloopuchernreleq}), (\ref{fracloopuclasseq}) and (\ref{loopunclasseq}).

We are left to count the number of the fractional loop U-structures. Suppose $E$ is a fractional loop U-bundle. From Diagram (\ref{Lfracuprindiag}) and Diagram (\ref{Lfldefdiag}), there is the commutative diagram

\begin{gather}
\begin{aligned}
\xymatrix{
0\ar[r]  &
H^1(M) \ar[r]^<<<<<{p^{l\ast}}  \ar[d]_{\nu}^{\cong}&
H^1(P_{U_{\mathbb{Q}}}(f^l)) \ar[r]^<<<<<<<<{\delta\circ j^\ast}   \ar[d]_{\nu}^{\cong} &
H^2(BU(n)_{\mathbb{Q}}) \ar[r]^<<<<<<<<{f^{l\ast}} \ar[d]_{\nu}^{\cong}&
H^2(M)\ar[d]_{\nu}\\
0\ar[r]  &
H^0(LM) \ar[r]^<<<<<{Lp^{l\ast}}  \ar[d]_{L\pi^\ast}  &
H^0(LP_{U_{\mathbb{Q}}}(Lf^l)) \ar[r]^<<<<<<<<{\delta\circ Lj^\ast}   \ar[d]_{L\Phi^\ast} &
H^1(BLU(n)_{\mathbb{Q}}) \ar[r]^<<<<<<<<{Lf^{l\ast}} \ar[d]_{L\phi^\ast} &
H^1(LM)\ar[d]_{L\pi^\ast} \\
&
H^0(LY) \ar[r]^<<<<<{Lp^{a\ast}} &
H^0(LP_U(f^a)) \ar[r]^<<<<<{\delta\circ Lj^\ast}&
H^1(BLU(1)\times BLU(n)) \ar[r]^<<<<<{Lf^{a\ast}} &
H^1(LY),
}
\end{aligned}
\label{H12fracloopuprindiag}
\end{gather}
where the first row, as the second row in Diagram (\ref{H12fracuprindiag}), is exact and transgresses to the second row by the naturality of the free suspension, and the transgressions $\delta: H^{0}(LU(1)\times LU(n))\stackrel{}{\rightarrow}H^1(BLU(1)\times BLU(n))$ and $\delta: H^0(LU(n)_{\mathbb{Q}})\stackrel{}{\rightarrow}H^1(BLU(n)_{\mathbb{Q}})$ are isomorphisms.
Since for $i=0$, or $1$, 
\[
H^i(X;\mathbb{Q})\cong {\rm Hom}(H_i(X),\mathbb{Q})\cong {\rm Hom}(\pi_i(X),\mathbb{Q}),
\]
it is easy to see that the free suspensions $\nu$ in the first three columns of Diagram (\ref{H12fracloopuprindiag}) are isomorphisms. Hence the second row in Diagram (\ref{H12fracloopuprindiag})
is exact except the last term. Since $Lf^{l\ast}(z^{\q}_1)=z_1^{l,a}(LE)=0$ and $Lf^{a\ast}(z_1-sh)=z_1(LE)-s\mathfrak{a}=0$ by Lemma \ref{obfracloopulemma}, a fractional loop U-structure is determined by the choice of the pair $(P, L\Phi^\ast(P))\in H^0(LP_{U_{\mathbb{Q}}}(f^l))\times H^0(LP_U(f^a))$ such that $\delta\circ Lj^\ast(P)=z^{\q}_1$. From the exactness of first three arrows in the second row of Diagram (\ref{H12fracloopuprindiag}), there are exactly $H^0(LM)$ many of such choices. 
Hence the fractional loop U-structures on $LE$ are in one-to-one correspondence with the elements of $H^0(LM)$. This completes the proof of the theorem.
\end{proof}

By Theorem \ref{fracloopunthm} $\xi_2$ is a ``classifying space'' of the fractional loop U-bundle $E$ on the level of characteristic classes. Hence, we may call the universal map $\xi_2: BU(1)\times B\overline{LU}(n)_l\stackrel{}{\longrightarrow}BL_0U(n)_\mathbb{Q}$ the {\it fractional classifying space} of the fractional loop U-structure, and the fractional loop U-bundle $E$ has the {\it classifying map} $(g^a_2, g^l_2)$.

Geometrically, in Diagram (\ref{fracloopunliftdiag}) the structure group $LU(n)_{\mathbb{Q}}$ of the rational loop vector bundle determined by $Lf^l$ is lifted to $L_0U(n)_{\mathbb{Q}}$ through $B\iota_{2\mathbb{Q}}$, while the structure group $LU(1)\times LU(n)$ of the loop vector bundle $L(L_a\oplus E)$ is lifted to $U(1)\times \overline{LU}(n)_l$ through $B(\kappa_l, \psi_s)$. Hence a fractional U-bundle $E$ after looping admits a fractional loop U-structure if and only if its relative structural group ($LU(1)\times LU(n)$, $LU(n)_{\mathbb{Q}}$) can be lifted to ($U(1)\times \overline{LU}(n)_l$, $L_0U(n)_{\mathbb{Q}}$).
In terms of principal bundles, this means that the fractional loop principal bundle (\ref{Lfracuprindiag}) of the fractional U-bundle $E$ after looping can be lifted to the fractional loop principal bundle 
\begin{gather}
\begin{aligned}
\xymatrix{
U(1)\times \overline{LU}(n)_l \ar[r]^<<<<<{\jmath_{2l}}  \ar[d]^{\Omega \xi_2} &
P_{U(1)\times \overline{LU}_l}(g_2^a) \ar[r]^<<<<<{\mathfrak{p}_2^a} \ar[d]^{\Theta_2}&
LY\ar[d]^{L\pi}\\
L_0U(n)_{\mathbb{Q}} \ar[r]^<<<<<{\jmath_2} &
P_{L_0U_{\mathbb{Q}}}(g_2^l) \ar[r]^<<<<<{\mathfrak{p}_2^l}&
LM,
}
\end{aligned}
\label{fracloopuprindiag}
\end{gather}
where the top row is the principal bundle classified by $g_2^a$, and the bottom row is the principal fibration induced from $g_2^l$.

%----------------------------------------------------------------------------------------------------------------------------------------------------------
\subsection{Fractional loop U-structure vs fractional SU-structure}
\label{sec: fracloopuvssu}
There are two ways to compare the fractional loop U-structure with the fractional SU-structure, from the perspective of classifying spaces or from the perspective of free suspension.

Let us first look at the universal case from the perspective of classifying spaces. Note that the universal diagram for fractional SU-structure is Diagram (\ref{unifracsudiag}). Apply the free loop functor on it, we obtain the diagram
\begin{gather}
\begin{aligned}
\xymatrix{
BLU(n)_l \ar[r]^<<<<<<{BL(d_l, \rho_s)} \ar[d]^{L\phi_2}&
BLU(1)\times BLU(n) \ar[r]^<<<<{BL\mu_s} \ar[d]^{L\phi}&
BLU(1) \ar[d]^{BLr}\\
BLSU(n)_{\mathbb{Q}} \ar[r]^{BLi_{2\mathbb{Q}}} &
BLU(n)_{\mathbb{Q}}  \ar[r]^{BL{\rm det}_{\mathbb{Q}}}&
BLU(1)_\mathbb{Q},
}
\end{aligned}
\label{Lunifracsudiag}
\end{gather}
which is different from the universal diagram for fractional loop U-structure, Diagram (\ref{lunifracsudiag}). Indeed, by the free suspension formulae in Lemma \ref{tranlemma}, it is clear that in Diagram (\ref{Lunifracsudiag}) $BLi_{2\mathbb{Q}}$ kills the universal classes $z^{\q}_1$ and $c^{\q}_1$ represented by the map $BL{\rm det}_{\mathbb{Q}}$, while in Diagram (\ref{lunifracsudiag}) $B\iota_{2\mathbb{Q}}$ only kills $z^{\q}_1$ represented by the map $B\varepsilon_{\mathbb{Q}}$; similarly for the top rows of both diagrams. Nevertheless, if we kills the class $c^{\q}_1$ in Diagram (\ref{lunifracsudiag}) through a suitable way, we can achieve Diagram (\ref{Lunifracsudiag}) to make both structures match with each other. 

Consider the diagram of groups
\begin{gather}
\begin{aligned}
\xymatrix{
LU(n)_l  \ar@/^1.5pc/[rrd]^{Ld_l}   \ar@/_1.5pc/[ddr]_{L\rho_s}  \ar@{.>}[dr]^{\widehat{Li_{2l}}} \\
& U(1)\times \overline{LU}(n)_l \ar[d]^{\psi_s} \ar[r]^<<<<{\kappa_l} &
 LU(1)\ar[d]^{\theta_s} \\
 &LU(n)  \ar[r]^<<<<<<<<<{\varepsilon} &
 \Omega S^1,
 }
\end{aligned}
\label{lunlbarpulldiag}
\end{gather}
where the lower right square is the pullback defined in (\ref{u1lunextdiag}), $d_l$ and $\rho_s$ are defined in (\ref{unextdiag}), and the map $\widehat{Li_{2l}}$ will be defined momentarily. Indeed, by (\ref{unextdiag}) and the definition of the involved maps (\ref{thetaseq}), 
\[
\theta_s\circ Ld_l=\theta\circ L\tau_s\circ Ld_l=\theta\circ L{\rm det}\circ L\rho_s=\varepsilon \circ L\rho_s.
\]
Then by the universal property of pullback, there is a unique group homomorphism $\widehat{Li_{2l}}: LU(n)_l\rightarrow U(1)\times \overline{LU}(n)_l$ such that $\kappa_l\circ \widehat{Li_{2l}}=Ld_l$ and $\psi_s\circ \widehat{Li_{2l}}=L\rho_s$.
Apply the classifying functor $B$ to Diagram (\ref{lunlbarpulldiag}). 
By (\ref{fracloopuchernreleq}) and (\ref{blrhosclasseq}) it follows that
\begin{equation}\label{bhatl2ilclasseq}
\begin{split}
&(B\widehat{Li_{2l}})^\ast(g)=\overline{c}_1, (B\widehat{Li_{2l}})^\ast(\overline{z}_1)=\overline{z}_1, \\
&(B\widehat{Li_{2l}})^\ast(c_1)=s\overline{c}_1, (B\widehat{Li_{2l}})^\ast(z_2)=z_2, (B\widehat{Li_{2l}})^\ast(c_2)=c_2.\\
\end{split}
\end{equation}

Denote by $q_1: LU(1)\cong U(1)\times\Omega S^1\longrightarrow U(1)$, and $p_2: U(1)\times \overline{LU}(n)_l\longrightarrow \overline{LU}(n)_l$ the projections onto the first and second direct summands respectively. Consider the diagram 
\begin{gather}
\begin{aligned}
\xymatrix{
LU(n)_l \ar[r]^{Ld_l} \ar[d]_{p_2\circ \widehat{Li_{2l}}} &
LU(1) \ar[r]^{q_1} \ar[d]^{L\tau_s}&
U(1) \ar[d]^{\tau_s}\\ 
\overline{LU}(n)_l \ar[r]^{L{\rm det}\circ \overline{\rho}_s}  &
LU(1) \ar[r]^{q_1}&
U(1),
}
\end{aligned}
\label{lunlu1diag}
\end{gather}
where the right square commutes by naturality. By the definition of $\psi_s$ (\ref{psisdef}), Diagram (\ref{lunlbarpulldiag}) and (\ref{unextdiag})
\[
 L{\rm det}\circ\overline{\rho}_s \circ p_2\circ \widehat{Li_{2l}}=L{\rm det}\circ \psi_s\circ \widehat{Li_{2l}}=L{\rm det}\circ L\rho_s=L\tau_s\circ Ld_l,
\]
and then the left square of (\ref{lunlu1diag}) commutes. Moreover, from the definition of $\overline{\rho}_s$ in (\ref{Lunpulldiag}) and (\ref{unextdiag}), it is easy to see that the outer square of Diagram (\ref{lunlu1diag}) is a pullback. It follows that there is a group extension
\begin{equation}\label{lu1unexteq3}
\{1\}\stackrel{}{\longrightarrow} LU(n)_l \stackrel{\widehat{Li_{2l}}}{\longrightarrow} U(1)\times \overline{LU}(n)_l \stackrel{\tau_{-s}\times \overline{\varepsilon}^\prime}{\longrightarrow} U(1)\stackrel{}{\longrightarrow}\{1\},
\end{equation}
where $\overline{\varepsilon}^\prime$ is defined to be $q_1\circ L{\rm det}\circ\overline{\rho}_s$, and $\widehat{Li_{2l}}=(q_1\circ Ld_l, p_2\circ \widehat{Li_{2l}})$ by definition. It is easy to check that $B(\tau_{-s}\times \overline{\varepsilon}^\prime)^\ast(g)=\overline{c}_1-sg$.

On the other hand, by the bottom row of Diagram (\ref{lunifracsudiag}) and (\ref{h3blunqeq}), it is clear that
\begin{equation}\label{h3bl0unqeq}
H^{\leq 3}(BL_0U(n);\mathbb{Q})\cong \mathbb{Q}^{\leq 3}[c^{\q}_1,z^{\q}_2].
\end{equation}
\begin{lemma}\label{xi2classlemma}
For $\xi_2: BU(1)\times B\overline{LU}(n)_l\stackrel{}{\rightarrow} BL_0U(n)_{\mathbb{Q}}$ in Diagram (\ref{lunifracsudiag}),
\[
\xi_2^\ast(c^{\q}_1)=c_1-sg, \ \ ~\xi_2^\ast(z^{\q}_2)=z_2+\frac{1}{l}\overline{z}_1c_1.
\]
\end{lemma}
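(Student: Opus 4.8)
The plan is to reduce everything to already-established formulae via the commutativity of the left square of Diagram (\ref{lunifracsudiag}), namely $B\iota_{2\mathbb{Q}}\circ \xi_2 \simeq L\phi\circ B(\kappa_l,\psi_s)$. By our conventions on abuse of notation, the classes $c^{\q}_1,z^{\q}_2\in H^{\leq 3}(BL_0U(n)_{\mathbb{Q}};\mathbb{Q})$ of (\ref{h3bl0unqeq}) are the $B\iota_{2\mathbb{Q}}$-pullbacks of the corresponding universal loop classes on $BLU(n)_{\mathbb{Q}}$; hence $\xi_2^\ast(c^{\q}_1)=B(\kappa_l,\psi_s)^\ast L\phi^\ast(c^{\q}_1)$ and $\xi_2^\ast(z^{\q}_2)=B(\kappa_l,\psi_s)^\ast L\phi^\ast(z^{\q}_2)$, so it suffices to evaluate $L\phi^\ast$ on $c^{\q}_1$ and $z^{\q}_2$ and then feed the result into (\ref{fracloopuchernreleq}). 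For $c^{\q}_1$ this is immediate: since on $BLU(n)_{\mathbb{Q}}$ the class $c^{\q}_1$ equals $p^\ast$ of the universal rational Chern class and $L\phi$ commutes with the evaluation maps $p$, we get $L\phi^\ast(c^{\q}_1)=p^\ast\phi^\ast(c^{\q}_1)=c_1-sg$ by Lemma \ref{uniphiclasslemma}, and $B(\kappa_l,\psi_s)^\ast$ fixes $c_1-sg$ by (\ref{fracloopuchernreleq}).

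For $z^{\q}_2$ I would invoke the free suspension. From the rational version of Lemma \ref{tranlemma} one has $\nu(c^{\q}_2)=z^{\q}_2+z^{\q}_1c^{\q}_1$ and $\nu(c^{\q}_1)=z^{\q}_1$, hence $z^{\q}_2=\nu(c^{\q}_2)-\nu(c^{\q}_1)\,c^{\q}_1$. Applying $L\phi^\ast$, using that $L\phi^\ast$ is a ring map intertwining $\nu$ (naturality of the free suspension) and $p^\ast$, gives $L\phi^\ast(z^{\q}_2)=\nu(\phi^\ast c^{\q}_2)-\nu(\phi^\ast c^{\q}_1)\,\phi^\ast(c^{\q}_1)$. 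Now $\phi^\ast(c^{\q}_1)=c_1-sg$ and $\phi^\ast(c^{\q}_2)=c_2-\frac{n-1}{l}gc_1+\frac{s(n-1)}{2l}g^2$ by Lemma \ref{uniphiclasslemma}; expanding with the derivation rule for $\nu$ and the basic transgressions $\nu(c_1)=z_1$, $\nu(c_2)=z_2+z_1c_1$, $\nu(g)=h$, one finds after collecting terms and substituting $sl=n$ that $L\phi^\ast(z^{\q}_2)=z_2+\frac{1}{l}hc_1+\frac{1}{l}z_1g-\frac{s}{l}hg$. Finally, $B(\kappa_l,\psi_s)^\ast$ sends $h\mapsto\overline{z}_1$, $z_1\mapsto s\overline{z}_1$, $c_1\mapsto c_1$, $g\mapsto g$ and $z_2\mapsto z_2$ by (\ref{fracloopuchernreleq}), whence the two $\overline{z}_1g$-contributions cancel and $\xi_2^\ast(z^{\q}_2)=z_2+\frac{1}{l}\overline{z}_1c_1$, as claimed.

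The one delicate point is the bookkeeping inside $\nu(\phi^\ast c^{\q}_2)$: one must apply the derivation property of $\nu$ carefully to the mixed monomials $gc_1$ and $g^2$ on $H^\ast(BLU(1)\times BLU(n))$, keeping the signs (trivial here, all relevant classes being even) and the coefficients straight, and then verify that the identity $sl=n$ produces exactly the coefficient $1/l$ on $hc_1$ and makes the $z_1g$ and $hg$ terms cancel against the analogous terms from $\nu(\phi^\ast c^{\q}_1)\,\phi^\ast(c^{\q}_1)$ once $B(\kappa_l,\psi_s)^\ast$ is applied. Everything else is formal naturality of $p^\ast$ and of the free suspension $\nu$.
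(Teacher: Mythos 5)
Your proposal is correct and follows essentially the paper's own route: both arguments use the left square of Diagram (\ref{lunifracsudiag}) to rewrite $\xi_2^\ast\circ B\iota_{2\mathbb{Q}}^\ast$ as $B(\kappa_l,\psi_s)^\ast\circ L\phi^\ast$, express $z^{\q}_2$ as $\nu(c^{\q}_2)-z^{\q}_1c^{\q}_1$ via Lemma \ref{tranlemma}, push the computation through Lemma \ref{uniphiclasslemma}, the derivation property and naturality of $\nu$, and finally apply (\ref{fracloopuchernreleq}). Your intermediate expression $L\phi^\ast(z^{\q}_2)=z_2+\frac{1}{l}hc_1+\frac{1}{l}z_1g-\frac{s}{l}hg$ and the cancellation of the $\overline{z}_1g$ terms after applying $B(\kappa_l,\psi_s)^\ast$ agree with the paper's calculation.
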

\begin{proof}
Recall by (\ref{uniphiclasseq}), 
\[
\phi^\ast(c^{\q}_1)=c_1-sg, \ \  ~\phi^\ast(c^{\q}_2)=c_2-\frac{n-1}{l}gc_1+\frac{s(n-1)}{2l}g^2.
\]
Then by (\ref{fracloopuchernreleq}) and Diagram (\ref{lunifracsudiag})
\[
\xi_2^\ast(c^{\q}_1)=\xi_2^\ast\circ B\iota_{2\mathbb{Q}}^\ast(c^{\q}_1)= B(\kappa_l,\psi_s)^\ast\circ L\phi^\ast(c^{\q}_1)=B(\kappa_l,\psi_s)^\ast(c_1-sg)=c_1-sg,
\]
and moreover with the naturality of the free suspension and Lemma \ref{tranlemma}
\[
\begin{split}
\xi_2^\ast(z^{\q}_2)
&=B(\kappa_l,\psi_s)^\ast\circ L\phi^\ast(\nu(c^{\q}_2)-z^{\q}_1c^{\q}_1)\\
&=B(\kappa_l,\psi_s)^\ast\big(\nu \circ \phi^\ast(c^{\q}_2)-(z_1-sh)(c_1-sg)\big)\\
&=B(\kappa_l,\psi_s)^\ast\big(\nu (c_2-\frac{n-1}{l}gc_1+\frac{s(n-1)}{2l}g^2)-(z_1-sh)(c_1-sg)\big)\\
&=B(\kappa_l,\psi_s)^\ast\big(z_2+z_1c_1-\frac{n-1}{l}(hc_1+gz_1)+\frac{s(n-1)}{l}gh-(z_1-sh)(c_1-sg)\big)\\
&=B(\kappa_l,\psi_s)^\ast(z_2+z_1c_1-\frac{n-1}{l}hc_1+\frac{1}{l}(z_1-sh)(g-lc_1))\\
&=z_2+\frac{1}{l}\overline{z}_1c_1.
\end{split}
\]
\end{proof}
Recall $p: LU(n)\rightarrow U(n)$ is the evaluation map at $1\in S^1$ defined in (\ref{generalfreefib}).
The composition
\[
BL_0U(n)_{\mathbb{Q}}\stackrel{B\iota_{2\mathbb{Q}}}{\longrightarrow} BLU(n)_{\mathbb{Q}}
\stackrel{B({\rm det}\circ p)_{\mathbb{Q}}}{\longrightarrow} BU(1)_{\mathbb{Q}},
\]
satisfies that 
$B({\rm det}\circ p\circ\iota_2)_{\mathbb{Q}}^\ast(c^{\q}_1)=c^{\q}_1$.
By Lemma \ref{xi2classlemma} and the previous discussion, we can construct the homotopy commutative diagram of homotopy fibrations
\begin{gather}
\begin{aligned}
\xymatrixcolsep{3.8pc}
\xymatrix{
BLU(n)_l \ar[r]^<<<<<<<<<{B\widehat{Li_{2l}}} \ar[d]^{\xi_2^\prime}&
BU(1)\times B\overline{LU}(n)_l \ar[r]^<<<<<<<<<{B(\tau_{-s}\times \varepsilon^\prime)} \ar[d]^{\xi_2}&
BU(1) \ar[d]^{Br}\\
BLSU(n)_{\mathbb{Q}} \ar[r]^{B\widehat{Li_2}_{\mathbb{Q}}} &
BL_0U(n)_{\mathbb{Q}}  \ar[r]^{B({\rm det}\circ p\circ\iota_2)_{\mathbb{Q}}}&
BU(1)_\mathbb{Q},
}
\end{aligned}
\label{unifracloopuvssudiag}
\end{gather}
where the homotopy fibration in the top row is obtained by applying the classifying functor to the extension (\ref{lu1unexteq2}), $\widehat{Li_2}: LSU(n)\rightarrow L_0U(n)$ is the loop inclusion $Li_2$ restricted into the component of $LU(n)$ containing the image, and $\xi_2^\prime$ is the induced map.
\begin{lemma}\label{xi2'=Lphi2lemma}
In Diagram (\ref{unifracloopuvssudiag}), $\xi_2^\prime$ can be chosen to be $L\phi_2$, and
\[
(\kappa_l,\psi_s)\circ \widehat{Li_{2l}}=L(d_l, \rho_s), \ \ \ \iota_2 \circ \widehat{Li_2}=Li_2.
\]
\end{lemma}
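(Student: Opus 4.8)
The two identities between group homomorphisms are immediate from the constructions. The homomorphism $\widehat{Li_{2l}}$ was defined through the universal property of the pullback square in Diagram~(\ref{lunlbarpulldiag}), so that $\kappa_l\circ\widehat{Li_{2l}}=Ld_l$ and $\psi_s\circ\widehat{Li_{2l}}=L\rho_s$; viewing these as the two coordinate maps of a morphism into $LU(1)\times LU(n)$ gives $(\kappa_l,\psi_s)\circ\widehat{Li_{2l}}=(Ld_l,L\rho_s)=L(d_l,\rho_s)$. Likewise $\widehat{Li_2}$ is by construction the corestriction of the loop inclusion $Li_2$ onto the path component $L_0U(n)\subseteq LU(n)$ that contains its (connected) image, so composing with the inclusion $\iota_2$ recovers $Li_2$.

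For the identification $\xi_2'\simeq L\phi_2$, the plan is to show that $L\phi_2$ is the map on homotopy fibers induced by the right-hand map of horizontal fibrations in Diagram~(\ref{unifracloopuvssudiag}), which is precisely what characterizes $\xi_2'$ up to homotopy. Applying the free loop functor to the left square of Diagram~(\ref{unifracsudiag}) (the universal square of the fractional SU-structure)---and using $BLG\simeq LBG$ together with the standard fact that $L$ commutes with rationalization on simply connected spaces of finite type, so that the rational corners $BSU(n)_{\mathbb{Q}},BU(n)_{\mathbb{Q}}$ become $BLSU(n)_{\mathbb{Q}},BLU(n)_{\mathbb{Q}}$---yields the left square of Diagram~(\ref{Lunifracsudiag}), namely $BLi_{2\mathbb{Q}}\circ L\phi_2\simeq L\phi\circ BL(d_l,\rho_s)$. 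I then substitute $BL(d_l,\rho_s)=B(\kappa_l,\psi_s)\circ B\widehat{Li_{2l}}$ (first identity), $L\phi\circ B(\kappa_l,\psi_s)\simeq B\iota_{2\mathbb{Q}}\circ\xi_2$ (the left square of Diagram~(\ref{lunifracsudiag}), valid by construction of $\xi_2$), and $BLi_{2\mathbb{Q}}=B\iota_{2\mathbb{Q}}\circ B\widehat{Li_2}_{\mathbb{Q}}$ (second identity), to get $B\iota_{2\mathbb{Q}}\circ\xi_2\circ B\widehat{Li_{2l}}\simeq B\iota_{2\mathbb{Q}}\circ B\widehat{Li_2}_{\mathbb{Q}}\circ L\phi_2$. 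Since $B\iota_{2\mathbb{Q}}$ admits a left homotopy inverse (because $BLU(n)_{\mathbb{Q}}\simeq BL_0U(n)_{\mathbb{Q}}\times U(1)_{\mathbb{Q}}$, as used in the proof of Theorem~\ref{fracloopunthm}), it cancels, leaving $\xi_2\circ B\widehat{Li_{2l}}\simeq B\widehat{Li_2}_{\mathbb{Q}}\circ L\phi_2$. As $B\widehat{Li_{2l}}$ and $B\widehat{Li_2}_{\mathbb{Q}}$ are the fiber inclusions of the two horizontal homotopy fibrations of Diagram~(\ref{unifracloopuvssudiag}), this exhibits $L\phi_2$ as (the, up to homotopy) map of fibers, i.e. $\xi_2'\simeq L\phi_2$.

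The delicate step is the last one: deducing that $L\phi_2$ \emph{is} the induced fiber map, rather than merely some map that happens to be compatible with the fiber inclusions. This uses the uniqueness up to homotopy of the map on homotopy fibers of a map of fibrations, which in turn requires the homotopies produced above to cohere with the projections to the two base circles; unwinding this is routine but tedious. A more computational route that sidesteps it: $BLSU(n)_{\mathbb{Q}}$ is rationally a product of Eilenberg--MacLane spaces (since $BSU(n)_{\mathbb{Q}}$ is, and $L$ commutes with rationalization here), so a map into it is determined up to homotopy by the pullbacks of the generating loop classes $c_k^{\q}$ and $z_k^{\q}$. One then verifies $(\xi_2')^\ast=(L\phi_2)^\ast$ on these generators: the values of $(L\phi_2)^\ast$ follow from Lemma~\ref{phi2chernlemma} and~(\ref{phi2c2eq}) together with the transgression formulae of Corollary~\ref{tranllemma}; the values of $(\xi_2')^\ast$ follow from the left square of Diagram~(\ref{unifracloopuvssudiag}) together with Lemma~\ref{xi2classlemma}, (\ref{fracloopuchernreleq}) and~(\ref{bhatl2ilclasseq}); and the two sets of formulae agree after substituting $n=ls$.
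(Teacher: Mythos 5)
Your proof is correct and follows essentially the same route as the paper: the two group identities hold by construction, and your chain of homotopies (loop the left square of Diagram (\ref{unifracsudiag}), substitute the identities and the left square of Diagram (\ref{lunifracsudiag}), then cancel the left-homotopy-invertible $B\iota_{2\mathbb{Q}}$) is exactly the paper's argument, ending at $\xi_2\circ B\widehat{Li_{2l}}\simeq B\widehat{Li_2}_{\mathbb{Q}}\circ L\phi_2$. The paper stops there, reading ``$\xi_2^\prime$ can be chosen to be $L\phi_2$'' as meaning precisely that the left square of Diagram (\ref{unifracloopuvssudiag}) commutes up to homotopy with this choice, so your extra discussion of uniqueness of the induced fiber map and the alternative Eilenberg--MacLane-space argument (which as sketched only checks the generators in low degrees, not all $c_k^{\q}$, $z_k^{\q}$) is careful but not needed for the statement as the paper uses it.
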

\begin{proof}
First, $\iota_2 \circ \widehat{Li_2}=Li_2$ holds by definiton, and $(\kappa_l,\psi_s)\circ \widehat{Li_{2l}}=L(d_l, \rho_s)$ holds by (\ref{lunlbarpulldiag}).
To show that $\xi_2^\prime$ can be chosen to be $L\phi_2$, notice that by the homotopy commutativity of Diagram (\ref{Lunifracsudiag}) and (\ref{unifracsudiag}),
\begin{equation}\label{xi2'=Lphi2lemmapfeq1}
\begin{split}
&B\iota_{2\mathbb{Q}}\circ \xi_2\circ B\widehat{Li_{2l}}
\simeq L\phi \circ B(\kappa_l,\psi_s)\circ B\widehat{Li_{2l}}\\
&\simeq L\phi \circ BL(d_l, \rho_s)
\simeq BLi_{2\mathbb{Q}} \circ L\phi_2
\simeq B\iota_{2\mathbb{Q}}\circ  B\widehat{Li_2}_{\mathbb{Q}}\circ L\phi_2.
\end{split}
\end{equation}
On the other hand, the homotopy fibration
$
BL_0U(n)\stackrel{B\iota_2}{\rightarrow}BLU(n)\stackrel{}{\rightarrow} U(1)
$
splits, and $B\iota_2$ admits a left homotopy inverse $q$. Hence by (\ref{xi2'=Lphi2lemmapfeq1})
\[
\xi_2\circ B\widehat{Li_{2l}}\simeq q_{\mathbb{Q}} \circ B\iota_{2\mathbb{Q}}\circ \xi_2\circ B\widehat{Li_{2l}}\simeq q_{\mathbb{Q}} \circ B\iota_{2\mathbb{Q}}\circ  B\widehat{Li_2}_{\mathbb{Q}}\circ L\phi_2\simeq B\widehat{Li_2}_{\mathbb{Q}}\circ L\phi_2,
\]
which means that the left square in Diagram (\ref{unifracloopuvssudiag}) commutes up to homotopy if we let $\xi_2^\prime=L\phi_2$. This completes the proof of the lemma. 
\end{proof}

By Lemma \ref{xi2'=Lphi2lemma}, we see that the left square of Diagram (\ref{Lunifracsudiag}) admits a factorization
\begin{gather}
\begin{aligned}
\xymatrixcolsep{3.2pc}
\xymatrix{
BLU(n)_l \ar[r]^<<<<<<<<<{B\widehat{Li_{2l}}} \ar[d]^{L\phi_2}    \ar@/^2.2pc/[rr]^{BL(d_l, \rho_s)} &
BU(1)\times B\overline{LU}(n)_l  \ar[d]^{\xi_2} \ar[r]^<<<<<<{B(\kappa_l,\psi_s)} &
BLU(1)\times BLU(n) \ar[d]^{L\phi}\\
BLSU(n)_{\mathbb{Q}} \ar[r]^{B\widehat{Li_2}_{\mathbb{Q}}}  \ar@/_2.2pc/[rr]^{BLi_{2\mathbb{Q}}} &
BL_0U(n)_{\mathbb{Q}} \ar[r]^{B\iota_{2\mathbb{Q}}} &
BLU(n)_{\mathbb{Q}}, 
}
\end{aligned}
\label{factorLunifracsudiag}
\end{gather}
which establishes the relation between the fractional loop U-structure and the fractional SU-structure in the universal case.

Now let us turn to the fractional U-bundle $E$ defined by Diagram (\ref{fldefdiag}). The criteria of fractional SU-structure and fractional loop U-structure are given in Theorem \ref{fracsunthm} and Theorem \ref{fracloopunthm} respectively. By the above analysis, if $E$ admits a fractional SU-structure, it is clear that $E$ is fractional loop U. Conversely, if $E$ is fractional loop U, then in order to lift it to a fractional SU-structure, one has to further kill the first fractional Chern class $c_1^{l,a}(LE)$.
\begin{theorem}\label{fracloopuvssuthm1}
Let $E$ be an $(a,\frac{1}{l})$-fractional $U(n)$-bundle determined by Diagram (\ref{fracudiagintro}) and $l>1$. 

If $E$ admits an $(a,\frac{1}{l})$-fractional $SU(n)$-structure, then $E$ is fractional loop U. 

Conversely, suppose $E$ is fractional loop U with the classifying map $(g_2^a, g_2^l)$ in Diagram (\ref{fracloopunliftdiag}). Then it can be lifted to a fractional SU-structure if and only if there exists a map $g_2^{l\prime}$ such that the following diagram commutes up to homotopy
\begin{gather}
\begin{aligned}
\xymatrix{
& BLSU(n)_{\mathbb{Q}} \ar[dr]^{B\widehat{Li_2}_{\mathbb{Q}}} \\
LM \ar@{.>}[ur]^<<<<<<<<{g_2^{l\prime}} \ar[rr]^<<<<<<<<<<<<<<<<<<{g_2^{l}}&&
BL_0U(n)_{\mathbb{Q}}.
}
\end{aligned}
\label{fracloopun+liftdiag}
\end{gather}
\end{theorem}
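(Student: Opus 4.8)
The plan is to reduce both directions to the cohomological characterizations already available: $E$ is fractional $SU$ iff $c_1^{l,a}(E)=0$ (Definition \ref{fracsundef}), $E$ is fractional loop $U$ iff $z_1^{l,a}(LE)=0$ (Definition \ref{fracloopundef}), and the two obstruction classes are linked by the transgression identity $z_1^{l,a}(LE)=\nu(c_1^{l,a}(E))$ of (\ref{Etran1eq}). With this the first assertion is essentially immediate, while the content of the converse is that the a priori infinite tower of obstructions to the lifting problem (\ref{fracloopun+liftdiag}) collapses to a single primary obstruction, which one then computes.

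\textbf{First assertion.} If $E$ is fractional $SU$ then $c_1^{l,a}(E)=0$, hence $z_1^{l,a}(LE)=\nu(c_1^{l,a}(E))=0$ by (\ref{Etran1eq}), so $E$ is fractional loop $U$. For an argument phrased in the classifying-space language of this subsection, I would instead loop the classifying diagram (\ref{fracsunliftdiag}) of a chosen fractional $SU$-structure; using $LBG\simeq BLG$ this yields a lift $(Lf_2^a,Lf_2^l)$ of $(Lf^a,Lf^l)$ through $(BL(d_l,\rho_s),BLi_{2\mathbb{Q}})$, and the factorization Diagram (\ref{factorLunifracsudiag}) together with $\xi_2\circ B\widehat{Li_{2l}}\simeq B\widehat{Li_2}_{\mathbb{Q}}\circ L\phi_2$ from Lemma \ref{xi2'=Lphi2lemma} shows that $g_2^a:=B\widehat{Li_{2l}}\circ Lf_2^a$ and $g_2^l:=B\widehat{Li_2}_{\mathbb{Q}}\circ Lf_2^l$ fill in Diagram (\ref{fracloopunliftdiag}), so $E$ is fractional loop $U$ by Theorem \ref{fracloopunthm}.

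\textbf{Converse.} First I would pin down the homotopy type of $B\widehat{Li_2}_{\mathbb{Q}}$. Since $L_0U(n)$ is connected, $BL_0U(n)$ is simply connected; by (\ref{h3bl0unqeq}) and the rational loop Whitehead tower (\ref{whtowerluqeq}), the map $B\widehat{Li_2}_{\mathbb{Q}}\colon BLSU(n)_{\mathbb{Q}}\to BL_0U(n)_{\mathbb{Q}}$ is the stage killing $\pi_2\otimes\mathbb{Q}\cong\mathbb{Q}$, i.e. killing exactly the degree-two class $c_1^{\q}$, so up to homotopy it is the homotopy fibre of $c_1^{\q}\colon BL_0U(n)_{\mathbb{Q}}\to K(\mathbb{Q},2)$, with homotopy fibre $K(\mathbb{Q},1)$. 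Hence $g_2^l$ lifts to some $g_2^{l\prime}$ along $B\widehat{Li_2}_{\mathbb{Q}}$ if and only if $c_1^{\q}\circ g_2^l$ is null-homotopic, that is $g_2^{l\ast}(c_1^{\q})=0$. By (\ref{fracloopunchernforeq}) one has $g_2^{l\ast}(c_1^{\q})=c_1^{l,a}(LE)=p^\ast\big(c_1^{l,a}(E)\big)$, and since the evaluation $p\colon LM\to M$ is split by the constant-loop inclusion, $p^\ast$ is injective. Therefore $g_2^{l\prime}$ exists iff $c_1^{l,a}(E)=0$, i.e. iff $E$ admits a fractional $SU$-structure; the implication ``$E$ fractional $SU$ $\Rightarrow g_2^{l\prime}$ exists'' is the same chain read backwards, and conversely from a given $g_2^{l\prime}$ one gets $g_2^{l\ast}(c_1^{\q})=(g_2^{l\prime})^\ast\big((B\widehat{Li_2}_{\mathbb{Q}})^\ast c_1^{\q}\big)=0$.

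\textbf{Main obstacle.} The only genuinely non-formal step is the rational identification of $B\widehat{Li_2}_{\mathbb{Q}}$ as the fibre inclusion over $c_1^{\q}$, which is exactly what forces the lifting problem (\ref{fracloopun+liftdiag}) to be governed by the single class $g_2^{l\ast}(c_1^{\q})$; once that is in place the computation of the obstruction is immediate from naturality of $\nu$ (Lemma \ref{tranlemma}), Lemma \ref{xi2'=Lphi2lemma}, and (\ref{fracloopunchernforeq}). I would also flag the mild ambiguity in the phrase ``lifted to a fractional $SU$-structure'': the obstruction $c_1^{l,a}(LE)=p^\ast(c_1^{l,a}(E))$ does not depend on the chosen fractional loop $U$-structure $(g_2^a,g_2^l)$, so the criterion is really a condition on $E$ alone, consistent with the sharper comparison of the \emph{sets} of structures carried out afterwards in Theorem \ref{fracloopuvssuthm2}.
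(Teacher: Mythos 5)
Your obstruction computation is essentially the paper's own argument: the rational identification of $B\widehat{Li_2}_{\mathbb{Q}}$ as the fibre over $c^{\q}_1$ is exactly the content of the paper's fibration diagram (\ref{unifracloopuvssudiag}) (where $c^{\q}_1$ is represented by $B({\rm det}\circ p\circ\iota_2)_{\mathbb{Q}}$), and the chain ``lift exists $\Leftrightarrow$ $g_2^{l\ast}(c^{\q}_1)=c_1^{l,a}(LE)=p^\ast(c_1^{l,a}(E))=0$ $\Leftrightarrow$ $c_1^{l,a}(E)=0$'' is precisely how the paper extracts the vanishing of the first fractional Chern class, including the injectivity of $p^\ast$. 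Your first assertion also matches the paper (it is read off from the factorization (\ref{factorLunifracsudiag})).

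However, there is a genuine gap in the converse, and it is located exactly at the point you dismissed as a ``mild ambiguity.'' The theorem (as the paper proves it) asserts that the \emph{given} fractional loop U-structure $(g_2^a,g_2^l)$ can be lifted to a fractional SU-structure; this means more than ``$E$ admits some fractional SU-structure'': one must also check that a fractional SU-structure, classified by $(f_2^a,f_2^l)$ as in Diagram (\ref{fracsunliftdiag}), is compatible after looping with the structure you started from, i.e. that $B\widehat{Li_2}_{\mathbb{Q}}\circ Lf_2^{l}\simeq g_2^{l}$ (so that $g_2^{l\prime}$ can be taken to be $Lf_2^{l}$, and the SU-structure really lifts $g_2^l$ rather than some other fractional loop U-structure on $E$). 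Your argument stops at $c_1^{l,a}(E)=0$ and never establishes this compatibility, so under the intended reading the ``if'' direction is incomplete. The missing step is short and uses a trick you already invoked elsewhere: from $B\iota_{2\mathbb{Q}}\circ B\widehat{Li_2}_{\mathbb{Q}}\circ Lf_2^{l}\simeq BLi_{2\mathbb{Q}}\circ Lf_2^{l}\simeq Lf^{l}\simeq B\iota_{2\mathbb{Q}}\circ g_2^{l}$ and the fact that $B\iota_{2\mathbb{Q}}$ admits a left homotopy inverse (coming from the splitting $BLU(n)\simeq BL_0U(n)\times U(1)$, as in the last part of the proof of Lemma \ref{xi2'=Lphi2lemma}), one cancels $B\iota_{2\mathbb{Q}}$ to get $B\widehat{Li_2}_{\mathbb{Q}}\circ Lf_2^{l}\simeq g_2^{l}$. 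Adding this paragraph closes the gap and brings your proof in line with the paper's.
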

\begin{proof}
Suppose that the lift $g_2^{l\prime}$ in Diagram (\ref{fracloopun+liftdiag}) exists. Then by Diagram (\ref{unifracloopuvssudiag})
\[
B({\rm det}\circ p\circ\iota_2)_{\mathbb{Q}}\circ g_2^{l}\simeq B({\rm det}\circ p\circ\iota_2)_{\mathbb{Q}}\circ B\widehat{Li_2}_{\mathbb{Q}}\circ g_2^{l\prime}
\]
is null homotopic. It implies that the first fractional Chern class $c_1^{l, a}(LE)=0$. Since $p^\ast(c_1^{l, a}(E))=c_1^{l, a}(LE)$ with $p$ the evaluation map in (\ref{generalfreefib}) and $p^\ast$ is injective, we have $c_1^{l, a}(E)=0$, and then $E$ is fractional SU. Suppose a fractional SU-structure is given by a lift $f_2^l$ as in Diagram (\ref{fracsunliftdiag}), we have to show that after looping it lifts the original fractional loop U-structure given by $g_2^l$. Indeed, by the similar argument as in the last part in the proof of Lemma \ref{xi2'=Lphi2lemma} we have 
\[
B\iota_{2\mathbb{Q}}\circ  B\widehat{Li_2}_{\mathbb{Q}}\circ Lf_2^{l}
\simeq BLi_{2\mathbb{Q}} \circ Lf_2^{l}
\simeq Lf^l\simeq  B\iota_{2\mathbb{Q}}\circ g_2^l.
\]
Then since $B\iota_{2\mathbb{Q}}$ admits a left homotopy inverse, $B\widehat{Li_2}_{\mathbb{Q}}\circ Lf_2^{l}\simeq g_2^l$. This shows that the fractional SU-structure defined by $f_2^l$ lifts the fractional loop U-structure given by $g_2^l$. The converse statement and the other part of the theorem follow immediately from Diagram (\ref{factorLunifracsudiag}).
\end{proof}
Theorem \ref{fracloopuvssuthm1} and Diagram (\ref{factorLunifracsudiag}) are to compare the fractional SU-structure and the fractional loop U-structure from the perspective of classifying spaces. Classically we can also compare them from the perspective of the free suspension.

\begin{theorem}\label{fracloopuvssuthm2}
Let $E$ be an $(a,\frac{1}{l})$-fractional $U(n)$-bundle determined by Diagram (\ref{fracudiagintro}) and $l>1$. 

If $E$ is fractional SU, then $E$ is fractional loop U. Moreover, the distinct fractional SU-structures on $E$ transgress to the fractional loop U-structures via the isomorphic free suspension
\[
\nu: H^1(M)\stackrel{\cong}{\longrightarrow}H^0(LM).
\]
The converse statement is also true if the first fractional Chern class $c_1^{l,a}(E)$ is rationally spherical, that is, there exists a map $k_1: S^2\stackrel{}{\rightarrow} M$ such that $k_1^\ast(c_1^{l,a}(E))$ is nontrivial.

In particular, when $M$ is rationally simply connected, $E$ is fractional loop U implies that $E$ admits a fractional SU-structure.
\end{theorem}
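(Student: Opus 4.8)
The whole argument is organised around the transgression identity $z_1^{l,a}(LE)=\nu(c_1^{l,a}(E))$, which is the first formula of (\ref{Etran1eq}) (a consequence of Lemma~\ref{tranlemma} and naturality of the free suspension $\nu$). The two easy implications then fall out at once: if $E$ is fractional SU then $c_1^{l,a}(E)=0$ by Definition~\ref{fracsundef}, so $z_1^{l,a}(LE)=\nu(0)=0$ and $E$ is fractional loop U by Definition~\ref{fracloopundef}. Alternatively this can be read off the factorisation Diagram~(\ref{factorLunifracsudiag}) together with Theorem~\ref{fracloopuvssuthm1}.

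For the comparison of the \emph{sets} of structures, recall that by Theorem~\ref{fracsunthm} the fractional SU-structures on $E$ form a torsor over $H^1(M;\mathbb{Z})$, exhibited as the set of $P\in H^1(P_{U_{\mathbb{Q}}}(f^l))$ with $\delta\circ j^\ast(P)=c^{\q}_1$ in the bottom row of Diagram~(\ref{H12fracuprindiag}), while by Theorem~\ref{fracloopunthm} the fractional loop U-structures form a torsor over $H^0(LM;\mathbb{Z})$, exhibited as the set of $P'\in H^0(LP_{U_{\mathbb{Q}}}(Lf^l))$ with $\delta\circ Lj^\ast(P')=z^{\q}_1$ in the second row of Diagram~(\ref{H12fracloopuprindiag}). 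I would check that applying $L$ to the classifying diagram (\ref{fracsunliftdiag}) and using Lemma~\ref{xi2'=Lphi2lemma} (which identifies the looping of the level-$1$ map $\phi_2$ with the map $\xi_2'$ sitting over the loop tower) sends a fractional SU classifying map to a fractional loop U classifying map (\ref{fracloopunliftdiag}), and that on parametrising cohomology this operation is exactly the leftmost vertical arrow of Diagram~(\ref{H12fracloopuprindiag}), namely $\nu\colon H^1(M)\to H^0(LM)$ — this is just naturality of $\nu$ together with $\nu(c^{\q}_1)=z^{\q}_1$ and the fact that $\nu$ commutes with the transgressions $\delta$ and with $j^\ast, Lj^\ast$. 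Since $M$ is connected, $\nu\colon H^1(M)\to H^0(LM)$ is an isomorphism (via the identification of $H^0$ and $H^1$ with homomorphisms of the relevant (co)homotopy groups, as already used for the columns of Diagram~(\ref{H12fracloopuprindiag})), so the transgression of structures is a bijection; the formulae (\ref{fracloopunchernforeq}) follow by applying $\nu$ to (\ref{fracsunchernforeq}) and using Lemma~\ref{tranlemma} and the definitions (\ref{fracloopuclasseq}), (\ref{loopunclasseq}).

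For the converse, suppose $E$ is fractional loop U, i.e.\ $\nu(c_1^{l,a}(E))=z_1^{l,a}(LE)=0$; I want to force $c_1^{l,a}(E)=0$. If $c_1^{l,a}(E)$ is rationally spherical, pick $k_1\colon S^2\to M$ with $k_1^\ast(c_1^{l,a}(E))\neq 0$ in $H^2(S^2;\mathbb{Q})$. By naturality of $\nu$, $Lk_1^\ast(z_1^{l,a}(LE))=\nu\bigl(k_1^\ast(c_1^{l,a}(E))\bigr)$, and $\nu\colon H^2(S^2;\mathbb{Q})\to H^1(LS^2;\mathbb{Q})$ is injective because $i^\ast\circ\nu=\sigma^\ast$ (Property~(1) in Subsection~\ref{sec: freesus}) is the cohomology suspension $H^2(S^2;\mathbb{Q})\to H^1(\Omega S^2;\mathbb{Q})$, which is an isomorphism (every degree-$1$ class of $\Omega S^2$ is transgressive in the contractible path fibration over $S^2$). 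Hence $Lk_1^\ast(z_1^{l,a}(LE))\neq 0$, contradicting $z_1^{l,a}(LE)=0$; therefore $c_1^{l,a}(E)=0$ and $E$ is fractional SU. For the ``in particular'' clause, when $M$ is rationally simply connected the rational Hurewicz theorem gives $H_2(M;\mathbb{Q})\cong\pi_2(M)\otimes\mathbb{Q}$, so $\sigma^\ast\colon H^2(M;\mathbb{Q})\to H^1(\Omega M;\mathbb{Q})$ is injective and hence so is $\nu$ on $H^2(M;\mathbb{Q})$; then $\nu(c_1^{l,a}(E))=0$ already forces $c_1^{l,a}(E)=0$, so every fractional loop U-bundle over such $M$ carries a fractional SU-structure.

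The routine steps here are the two transgression identities; the main obstacle is the low-degree analysis of $\nu$. On the converse side this means correctly identifying $\ker\bigl(\nu\colon H^2(M;\mathbb{Q})\to H^1(LM;\mathbb{Q})\bigr)$ with the rationally non-spherical classes (through $i^\ast\circ\nu=\sigma^\ast$ and rational Hurewicz), so that ``$c_1^{l,a}(E)$ rationally spherical'' is exactly the condition making $\nu$ injective on that class. On the enumeration side it means verifying that the looping-of-structures map really is induced by $\nu\colon H^1(M)\to H^0(LM)$ — i.e.\ that the exact sequences (\ref{H12fracuprindiag}) and (\ref{H12fracloopuprindiag}) are compatible under $\nu$ — and that this $\nu$ is an isomorphism, which is where one must be careful about coefficients and about the connectivity of $LM$.
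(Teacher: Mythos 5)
Your proposal is correct and follows the paper's proof in all essentials: the forward implication and the bijection of structures come from $\nu(c_1^{l,a}(E))=z_1^{l,a}(LE)$ together with the parameter counts of Theorem \ref{fracsunthm} and Theorem \ref{fracloopunthm} and naturality of $\nu$ (the paper is exactly as brief here as you are, and your appeal to the identifications already used in the columns of Diagram (\ref{H12fracloopuprindiag}) matches the paper's justification that $\nu\colon H^1(M)\to H^0(LM)$ is an isomorphism). The only real divergence is the converse step: the paper invokes McLaughlin's geometric description of $\nu$ on the Hurewicz image via the commutative diagram (\ref{fracsugeotransdia}), relating $\nu$ to the injection $\pi_2(M)\otimes\mathbb{Q}\hookrightarrow\pi_1(LM)\otimes\mathbb{Q}$, whereas you pull back along $k_1\colon S^2\to M$ and use that $\nu$ is injective on $H^2(S^2;\mathbb{Q})$ because $i^\ast\circ\nu=\sigma^\ast$ is an isomorphism for $S^2$ in this degree. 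These are two implementations of the same idea (transgression detects rationally spherical classes); your version is slightly more self-contained and avoids the paper's loose ``up to a nonzero constant'' identification, while the paper's diagram has the advantage of setting up the machinery reused verbatim for the degree-$4$ analogue in Theorem \ref{fracloopsuvsu6thm2}. Your treatment of the rationally simply connected case (rational Hurewicz gives injectivity of $\sigma^\ast$, hence of $\nu$, on $H^2(M;\mathbb{Q})$) is an equivalent rephrasing of the paper's reduction to the spherical condition.
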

\begin{proof}
Recall by (\ref{Etran1eq}) $\nu(c_1^{l,a}(E))=z_1^{l,a}(LE)$. Then if $E$ admits a fractional SU-structure, $c_1^{l,a}(E)=0$ implies that $z_1^{l,a}(LE)=0$ which means that $E$ admits a fractional loop U-structure. By Theorem \ref{fracsunthm} and Theorem \ref{fracloopunthm}, the two structures are classified by $H^1(M)$ and $H^0(LM)\cong H^1(M)$ respectively. Then by the naturality of the free suspension, there is a one-to-one correspondence between the distinct fractional SU-structures and the fractional loop U-structures on $E$ through the free suspension. 

To prove the converse statement, we adopt the idea in the proof of Theorem $3.1$ in \cite{Mcl} which describes the free suspension $\nu$ geometrically for the elements in the Hurewicz image. As summarized in the proof of Theorem 5.1 of \cite{DHH}, there is the commutative diagram
\begin{gather}
\begin{aligned}
\xymatrix{
\pi_2(M)\otimes \mathbb{Q} \ar[r]^{h\otimes \mathbb{Q} } \ar@{_{(}->}[d]^{i\otimes \mathbb{Q}}  & H_2(M;\mathbb{Q})  \ar[r]^{{\rm dual}\ \ }_{\cong}  &H^2(M;\mathbb{Q}) \ar[d]^{\nu}  \\
\pi_1(LM) \otimes \mathbb{Q}\ar[r]^{h\otimes \mathbb{Q} }_{\cong}                        & H_1(LM; \mathbb{Q}) \ar[r]^{{\rm dual}\ \ }_{\cong} & H^1(LM;\mathbb{Q}),
}
\end{aligned}
\label{fracsugeotransdia}
\end{gather}
where $h$ is the Hurewicz homomorphism, and $i$ is the inclusion under the isomorphism $\pi_1(LM)\cong \pi_2(M)\oplus \pi_1(M)$. By the assumption, $c_1^{l, a}(E)\in H^2(M;\mathbb{Q})$ is rationally spherical. It follows that the composition in the top row of Diagram (\ref{fracsugeotransdia}) sends the class $[k_1]$ to $c_1^{l, a}(E)$ up to a nonzero constant.
 If $E$ is fractional loop U, or equivalently $\nu(c_1^{l,a}(E))=z_1^{l,a}(LE)=0$, by the commutativity of Diagram (\ref{fracsugeotransdia}) $i([k_1])=0$ and then $[k_1]=0$. Hence $c_1^{l, a}(E)=0$ and $E$ is fractional SU. This proved the converse statement.
 
If $M$ is rationally simply connected, then $h\otimes \mathbb{Q}$ in the top row of Diagram (\ref{fracsugeotransdia}) is an isomorphism by the rational Hurewicz theorem. Hence the spherical condition is satisfied and $E$ is fractional SU. This proves the special case and the theorem is proved.
 \end{proof}

%----------------------------------------------------------------------------------------------------------------------------------------------------------
\subsection{Fractional loop SU-structure}
\label{sec: fracloopsu}
Let $\pi: Y\rightarrow M$ be a map. Let $L_a$ be a complex line bundle determined by $a\in H^2(Y;\mathbb{Z})$ and $E$ be a complex vector bundle of rank $n$ classified by $f: Y\rightarrow BU(n)$. 
Suppose $l>1$ and $E$ admits an $(a,\frac{1}{l})$-fractional $SU(n)$-structure as described in Definition \ref{fracsundef}.
\begin{definition}\label{fracloopsundef}
Let $E$ be an $(a,\frac{1}{l})$-fractional $SU(n)$-bundle. $E$ has an {\it $(a, \frac{1}{l})$-fractional loop $SU(n)$-structure}, or simply a {\it fractional loop SU-structure} if $z_2^{l,a}(LE)=0$.
\end{definition}

Let us study the universal case. Applying the free loop functor to the fractional classifying map $\phi_2$ in Diagram (\ref{unifracsudiag}) we get $L\phi_2: BLU(n)_l\stackrel{}{\rightarrow} BLSU(n)_\mathbb{Q}$.
By either Diagram (\ref{factorLunifracsudiag}), Lemma \ref{xi2classlemma} and (\ref{bhatl2ilclasseq}), or Lemma \ref{tranllemma}, (\ref{phi2c2eq}) and the naturality of free suspension, one can check that
\begin{equation}\label{lphi2z2eq}
L\phi_2^\ast(z^{\q}_2)=z_2+\frac{s}{l}\overline{z}_1\overline{c}_1.
\end{equation}
We may define a topological group $\widehat{LSU}(n)_l$ by the pullback
\begin{gather}
\begin{aligned}
\xymatrix{
\widehat{LSU}(n)_l \ar[d]^{\Omega\xi_3}  \ar[r]^{\iota_{3l}}&
LU(n)_l \ar[d]^{\Omega L\phi_2} \\
\widehat{LSU}(n)_{\mathbb{Q}}  \ar[r]^{ \iota_{3\mathbb{Q}}}&
LSU(n)_{\mathbb{Q}},
}
\end{aligned}
\label{hatsunldiag}
\end{gather}
where $i_{3\mathbb{Q}}$ is the rationalization of the group extension
\begin{equation}\label{u1lsunexteq}
\{1\}\stackrel{}{\longrightarrow} U(1)\stackrel{}{\longrightarrow} \widehat{LSU}(n)\stackrel{\iota_{3}}{\longrightarrow}  LSU(n) \stackrel{}{\longrightarrow} \{1\}
\end{equation}
corresponding to the class $z_2\in H^3(BLSU(n))$, and $\xi_3=B\Omega\xi_3: B\widehat{LSU}(n)_l \stackrel{}{\rightarrow} B\widehat{LSU}(n)_{\mathbb{Q}}$ is the induced map between classifying spaces. Hence there is the induced group extension
\begin{equation}\label{hatsunlexteq}
\{1\}\stackrel{}{\longrightarrow} S^1_{\mathbb{Q}} \stackrel{}{\longrightarrow} \widehat{LSU}(n)_l \stackrel{\iota_{3l}}{\longrightarrow}  LU(n)_l  \stackrel{}{\longrightarrow} \{1\}.
\end{equation}
Let us denote
\[
\bar{z}_1=B\iota_{3l}^\ast(\overline{z}_1), ~\ \ ~  \bar{c}_1=B\iota_{3l}^\ast(\overline{c}_1), ~\ \ ~  z_2=B\iota_{3l}^\ast(z_2), ~\ \ ~  c_2=B\iota_{3l}^\ast(c_2).
\]
From (\ref{hatsunldiag}) and (\ref{lphi2z2eq}) we have that 
\begin{equation}\label{barz2eq}
z_2=-\frac{s}{l}\bar{z}_1\bar{c}_1.
\end{equation}

Now let us turn to the fractional SU-bundle $E$ determined by Diagram (\ref{fracsunliftdiag}) in Theorem \ref{fracsunthm}.
In particular, the SU-structure on $E$ is classified by the pair of maps $(f_2^a, f_2^l)$. Applying the free loop functor on $E$.
We summarize the formulae for the involved obstructions in the following lemma.
\begin{lemma}\label{obfracloopsulemma}
\[
Lf_2^{a\ast}(z_2+\frac{s}{l}\overline{z}_1\overline{c}_1)=z_2(LE)+\frac{1}{n}z_1(LE)c_1(LE),
~ \ \ ~  Lf_2^{l\ast}(z^{\q}_2)=z_2^{l,a}(LE). 
\]
Moreover, $c_1(LE)=sa$ and $z_1(LE)=s\mathfrak{a}$.
\end{lemma}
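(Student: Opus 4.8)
The plan is to derive all four identities from facts already established in the excerpt plus the naturality of the free suspension $\nu$, without any new computation. The last two statements, $c_1(LE)=sa$ and $z_1(LE)=s\mathfrak{a}$, come directly from the hypothesis that $E$ is fractional SU: by (\ref{fracsunchernforeq}) in Theorem \ref{fracsunthm} we have $c_1(E)=sa$, and applying $p^\ast$ (the evaluation map of the fibration (\ref{generalfreefib})) gives $c_1(LE)=p^\ast(c_1(E))=sa$ where $a\in H^2(LY)$ denotes $p^\ast$ of the base class. Then applying the derivation $\nu$ and using $\nu(c_1(E))=z_1(LE)$ (from (\ref{Etran2eq})) together with $\nu(a)=\mathfrak{a}$ yields $z_1(LE)=\nu(c_1(E))=\nu(sa)=s\mathfrak{a}$. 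So these two are essentially bookkeeping.

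For the first displayed formula, the right identity $Lf_2^{l\ast}(z^{\q}_2)=z_2^{l,a}(LE)$ is just the definition of the loop class in (\ref{fracloopuclasseq})/(\ref{fracloopuclasseq}-type notation) transported along $Lf_2^l$; more precisely $z_2^{l,a}(LE)$ is defined as the image of $z_2^{\q}$ under the classifying map $Lf^l$, and since $f_2^l$ lifts $f^l$ through $Bi_{2\mathbb{Q}}$ which sends $z_2^{\q}$ to $z_2^{\q}$, the identity follows. The left identity is the substantive one: I would start from the universal computation (\ref{lphi2z2eq}), namely $L\phi_2^\ast(z^{\q}_2)=z_2+\frac{s}{l}\overline{z}_1\overline{c}_1$, pull it back along $Lf_2^a$, and use the characteristic-class formulae $(B\widehat{Li_{2l}})^\ast$ and $(B\iota_{2l})^\ast$ recorded in (\ref{bhatl2ilclasseq}) and (\ref{barunlclassintro}) to identify $Lf_2^{a\ast}(\overline{z}_1)=\mathfrak{a}$, $Lf_2^{a\ast}(\overline{c}_1)=a$ (using the already-derived $z_1(LE)=s\mathfrak{a}$, $c_1(LE)=sa$ and $\overline{z}_1,\overline{c}_1$ being the $s$-th roots). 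This gives $Lf_2^{a\ast}(z_2+\frac{s}{l}\overline{z}_1\overline{c}_1)=z_2(LE)+\frac{s}{l}\mathfrak{a}a$, and since $n=ls$, i.e. $\frac{s}{l}=\frac{s^2}{n}$... wait — rather, $\frac{s}{l}\mathfrak{a}a = \frac{1}{n}(s\mathfrak{a})(sa)\cdot\frac{1}{s}\cdot s$; I would simply verify $\frac{s}{l}\mathfrak{a}a=\frac{1}{n}z_1(LE)c_1(LE)$ by substituting $z_1(LE)=s\mathfrak{a}$, $c_1(LE)=sa$, $n=ls$, which gives $\frac{1}{n}(s\mathfrak{a})(sa)=\frac{s^2}{ls}\mathfrak{a}a=\frac{s}{l}\mathfrak{a}a$, as needed.

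Alternatively — and this is the route I would actually present, since it avoids re-deriving (\ref{lphi2z2eq}) — I would get the left identity straight from naturality of $\nu$ applied to the non-loop obstruction formula (\ref{fracu6obintro}): that formula says $f_2^{a\ast}(c_2-\frac{s(n-1)}{2l}\overline{c}_1^2)=c_2(E)-\frac{s(n-1)}{2l}a^2$. Transgressing both sides by $\nu$, using that $\nu$ is natural and a derivation with $\nu(c_2(E))=z_2(LE)+z_1(LE)c_1(LE)$, $\nu(c_1^2(E))=2z_1(LE)c_1(LE)$ (from (\ref{Etran2eq})), and $c_1(E)=sa$, $z_1(LE)=s\mathfrak{a}$, one computes $\nu(c_2(E)-\frac{s(n-1)}{2l}a^2)=z_2(LE)+z_1(LE)c_1(LE)-\frac{s(n-1)}{l}\mathfrak{a}a=z_2(LE)+s^2\mathfrak{a}a-\frac{s(n-1)}{l}\mathfrak{a}a$, and $s^2-\frac{s(n-1)}{l}=\frac{s(sl-n+1)}{l}=\frac{s}{l}$ since $sl=n$; hence the right-hand side equals $z_2(LE)+\frac{s}{l}\mathfrak{a}a=z_2(LE)+\frac{1}{n}z_1(LE)c_1(LE)$. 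The main (only) obstacle is keeping the combinatorial coefficients straight through these substitutions and making sure the loop versions of the Chern-root identities (\ref{12fraccherneq}) are correctly pulled back; once the dictionary $\overline{z}_1\mapsto\mathfrak{a}$, $\overline{c}_1\mapsto a$ is fixed via (\ref{bhatl2ilclasseq}), everything is a one-line consequence of the derivation property of $\nu$.
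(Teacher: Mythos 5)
Your proposal is correct and is essentially the paper's argument: the paper likewise gets $c_1(LE)=sa$, $z_1(LE)=s\mathfrak{a}$ from (\ref{fracsunchernforeq})--(\ref{fracloopunchernforeq}), treats $Lf_2^{l\ast}(z^{\q}_2)=z_2^{l,a}(LE)$ as definitional, and obtains the main identity from Corollary \ref{tranllemma} plus naturality of $\nu$ and the arithmetic $n=ls$, which is exactly the content of both of your routes. The only point to make explicit in your preferred second route is the universal transgression $\nu\bigl(c_2-\tfrac{s(n-1)}{2l}\overline{c}_1^2\bigr)=z_2+\tfrac{s}{l}\overline{z}_1\overline{c}_1$ on $BU(n)_l$ (i.e.\ cite Corollary \ref{tranllemma}, as the paper does), so that the transgressed left-hand side of Lemma \ref{obfracu6lemma} is identified with $Lf_2^{a\ast}(z_2+\tfrac{s}{l}\overline{z}_1\overline{c}_1)$.
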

\begin{proof}
First $c_1(LE)=sa$ and $z_1(LE)=s\mathfrak{a}$ by (\ref{fracsunchernforeq}) and (\ref{fracloopunchernforeq}) since $E$ is fractional SU and hence fractional loop U.
$Lf_2^{l\ast}(z^{\q}_2)=z_2^{l,a}(LE)$ by definition (\ref{fracloopuclasseq}). Moreover, by 
(\ref{fracsunchernforeq}), (\ref{fracloopunchernforeq}), Lemma \ref{tranllemma} and the naturality of the free suspension,
\[
Lf_2^{a\ast}(z_2)=Lf_2^{a\ast}(\nu(c_2)-s^2\overline{z}_1\overline{c}_1)=\nu(c_2(E))-z_1(LE)c_1(LE)=z_2(LE),
\]
and hence
\[
\begin{split}
&Lf_2^{a\ast}(z_2+\frac{s}{l}\overline{z}_1\overline{c}_1)\\
&= z_2(LE)+\frac{s}{l}\cdot\frac{1}{s^2}z_1(LE)c_1(LE)\\
&=z_2(LE)+\frac{1}{n}z_1(LE)c_1(LE).
\end{split}
\]
This proves the first formula and hence the lemma.
\end{proof}

We are ready to prove Theorem \ref{fracloopsunthm}.
\begin{proof}[Proof of Theorem \ref{fracloopsunthm}]
By Theorem \ref{fracsunthm} the bottom square in Diagram (\ref{fracloopsunliftdiag}) commutes up to homotopy. The top right square in Diagram (\ref{fracloopsunliftdiag}) is Diagram (\ref{hatsunldiag}) after applying the classifying functor $B$. 

If $E$ is a fractional loop SU-bundle, then $z^{l,a}_2(LE)=0$. It follows that there exists a map $g_3^l: LM\rightarrow B\widehat{LSU}(n)_{\mathbb{Q}}$ such that $B\iota_{3\mathbb{Q}}\circ g_3^l\simeq Lf_2^l$, and we obtain the front triangle in Diagram (\ref{fracloopsunliftdiag}). Then since the top right square in Diagram (\ref{fracloopsunliftdiag}) is a homotopy pullback, there exists a unique $g_3^a: LY\rightarrow B\widehat{LSU}(n)_l$ up to homotopy such that $B\iota_{3l}\circ g_3^a\simeq Lf_2^a$ and $\xi_3 \circ g_3^a\simeq g_3^l\circ L\pi$. Thus the back triangle and the top left square in Diagram (\ref{fracloopsunliftdiag}) commute up to homotopy. We have showed that Diagram (\ref{fracloopsunliftdiag}) exists and commutes up to homotopy if $E$ is a fractional loop SU-bundle. The proof of the converse statement is similar and omitted. Moreover, by straightforward computation (\ref{fracloopsunchernforeq}) follows from (\ref{fracsunchernforeq}), (\ref{barz2eq}) and Diagram (\ref{fracloopsunliftdiag}). For instance, by the proof of Lemma \ref{obfracloopsulemma}
\[
g_3^{a\ast}(z_2)=g_3^{a\ast}\circ B\iota_{3l}^\ast(z_2)=Lf_2^{a\ast}(z_2)=z_2(LE).
\]
We are left to count the number of the fractional loop SU-structures. Suppose $E$ is a fractional loop SU-bundle. From Diagram (\ref{fracsuprindiag}) after applying the free loop functor $L$ and the bottom square of Diagram (\ref{fracloopsunliftdiag}), by the dual Blakers-Massey theorem \cite[Theorem C.3]{DHH} there is the commutative diagram
\begin{gather}
\begin{aligned}
\xymatrix{
 &
H^2(LY) \ar[r]^<<<<<<{Lp_2^{a\ast}} &
H^2(LP_{U_l}(f_2^a)) \ar[r]^<<<<<<{\delta\circ Lj_{2}^\ast}&
H^3(BLU(n)_l) \ar[r]^<<<<<<{Lf_2^{a\ast}} &
H^3(LY) \\
0\ar[r]  &
H^2(LM) \ar[r]^<<<<<{Lp_2^{l\ast}}  \ar[u]_{L\pi^\ast}&
H^2(LP_{SU_{\mathbb{Q}}}(Lf_2^l)) \ar[r]^<<<<<{\delta\circ Lj_2^\ast}   \ar[u]_{L\Phi_2^\ast} &
H^3(BLSU(n)_{\mathbb{Q}}) \ar[r]^<<<<<{Lf_2^{l\ast}} \ar[u]_{L\phi_2^\ast}&
H^3(LM)\ar[u]_{L\pi^\ast},
}
\end{aligned}
\label{H23loopfracuprindiag}
\end{gather}
where the second row is exact with the isomorphic transgression $\delta: H^2(LSU(n)_\mathbb{Q})\stackrel{}{\rightarrow} H^3(BLSU(n)_\mathbb{Q})$, and in the top row the transgression $\delta: H^2(LU(n)_l)\stackrel{}{\rightarrow} H^3(BLU(n)_l)$ is a monomorphism.
A fractional loop SU-structure is determined by the choice of the pair $(P, L\Phi_2^\ast(P))\in H^2(LP_{SU_{\mathbb{Q}}}(f_2^l))\times H^2(LP_{U_l}(f_2^a))$ such that $\delta\circ Lj_2^\ast(P)=z^{\q}_2$. From Diagram (\ref{H23loopfracuprindiag}), there are exactly $H^2(LM)$ many of such choices. Hence the fractional loop SU-structures on $E$ are in one-to-one correspondence with the elements of $H^2(LM)$. This completes the proof of the theorem.
\end{proof}
By Theorem \ref{fracloopsunthm} $\xi_3$ is a ``classifying space'' of the fractional loop SU-bundle $E$ on the level of characteristic classes. Hence, we may call the universal map $\xi_3: B\widehat{LSU}(n)_l \stackrel{}{\longrightarrow}B\widehat{LSU}(n)_\mathbb{Q}$ the {\it fractional classifying space} of the fractional loop SU-structure, and the fractional loop SU-bundle $E$ has the {\it classifying map} $(g^a_3, g^l_3)$.

Geometrically, in Diagram (\ref{fracloopsunliftdiag}) the structure group $LSU(n)_{\mathbb{Q}}$ of the rational vector bundle determined by $Lf_2^l$ is lifted to $\widehat{LSU}(n)_\mathbb{Q}$ through $B\iota_{3\mathbb{Q}}$, while the structure group $LU(n)$ of the vector bundle $L_a\oplus E$ after looping is lifted to $\widehat{LSU}(n)_l$ through $B\iota_{3l}$. Hence a fractional SU-bundle $E$ admits a fractional loop SU-structure if and only if its relative structural group ($LU(n)$, $LSU(n)_{\mathbb{Q}}$) can be lifted to ($\widehat{LSU}(n)_l$, $\widehat{LSU}(n)_\mathbb{Q}$).
In terms of principal bundles, this means that after looping the fractional principal bundle (\ref{fracsuprindiag}) of the fractional SU-bundle $E$ can be lifted to the fractional principal bundle 
\begin{gather}
\begin{aligned}
\xymatrix{
\widehat{LSU}(n)_l \ar[r]^<<<<<<{\jmath_3}  \ar[d]^{\Omega \xi_3} &
P_{\widehat{LSU}_l}(g_3^a) \ar[r]^<<<<<{\mathfrak{p}_3^a} \ar[d]^{\Theta_3}&
LY\ar[d]^{L\pi}\\
\widehat{LSU}(n)_\mathbb{Q}  \ar[r]^<<<<<{\jmath_3} &
P_{\widehat{LSU}_\mathbb{Q} }(g_3^l) \ar[r]^<<<<<{\mathfrak{p}_3^l}&
LM,
}
\end{aligned}
\label{fracloopsuprindiag}
\end{gather}
where the top row is the principal bundle classified by $g_3^a$, and the bottom row is the principal fibration induced from $g_3^l$.

\begin{remark}\label{loopsuloopspinrmk}
The group extension (\ref{u1lsunexteq}) can be compared with the extension
\begin{equation}\label{stringnexteq}
\{1\}\stackrel{}{\longrightarrow}U(1) \stackrel{}{\longrightarrow}\widehat{LSpin}(2n)\stackrel{\mathfrak{h}_3}{\longrightarrow} LSpin(2n)\stackrel{}{\longrightarrow}\{1\}
\end{equation}
corresponding to the generator $\mu\in H^3(BLSpin(2n))$ such that $\nu(\frac{p_1}{2})=\mu$. Recall there is the standard Lie group homomorphism $r: SU(n)\stackrel{}{\rightarrow}Spin(2n)$ such that $Br^\ast(p_1)=-2c_2$. We then have $BLr^\ast(\mu)=-z_2$.
Hence we can define $\widehat{LSU}(n)$ by the pullback of groups
\begin{gather}
\begin{aligned}
\xymatrix{
\widehat{LSU}(n) \ar[r]^{} \ar[d]^{\iota_3} &
\widehat{LSpin}(2n) \ar[d]^{\mathfrak{h}} \\
LSU(n) \ar[r]^<<<<<<{Lr} &
LSpin(2n).
}
\end{aligned}
\label{Lsuspindiag}
\end{gather}
\end{remark}

%----------------------------------------------------------------------------------------------------------------------------------------------------------
\subsection{Fractional loop SU-structure vs fractional U$\langle 6 \rangle$-structure}
\label{sec: fracloopsuvsu6}
There are two ways to compare the fractional loop SU-structure with the fractional U$\langle 6 \rangle$-structure, from the perspective of classifying spaces or from the perspective of free suspension.

Let us first look at the universal case from the perspective of classifying spaces. Recall that the universal diagram for fractional U$\langle 6 \rangle$-structure is Diagram (\ref{u6nldiag}) after applying the classifying functor $M$. Apply the free loop functor on it, we obtain the pullback diagram
\begin{gather}
\begin{aligned}
\xymatrix{
BLU\langle 6 \rangle(n)_l \ar[d]^{L \phi_3}  \ar[r]^{BLi_{3l}}&
BLU(n)_l \ar[d]^{L \phi_2} \\
BLU\langle 6 \rangle(n)_{\mathbb{Q}}  \ar[r]^{BLi_{3\mathbb{Q}}}&
BLSU(n)_{\mathbb{Q}},
}
\end{aligned}
\label{BLu6nldiag}
\end{gather}
which is different from the universal diagram for fractional loop SU-structure, Diagram (\ref{hatsunldiag}) after applying the classifying functor $M$. Indeed, by the free suspension formulae in Lemma \ref{tranlemma}, it is clear that in Diagram (\ref{BLu6nldiag}) $BLi_{3\mathbb{Q}}$ kills the universal classes $z^{\q}_2$ and $c^{\q}_2$, while in Diagram (\ref{hatsunldiag}) $B\iota_{3\mathbb{Q}}$ only kills $z^{\q}_2$; similarly for the top rows of both diagrams. Nevertheless, there is a canonical way to kill the class $c^{\q}_2$ in Diagram (\ref{hatsunldiag}) to make both structures match with each other. 

Indeed, in the group extension (\ref{u1lsunexteq}) $\widehat{LSU}(n)$ is the $2$-connected cover of $LSU(n)$. Hence by connectivity the extension $Li_3$ factors as
\[
Li_3: LU\langle 6\rangle (n)\stackrel{\widehat{Li_3}}{\longrightarrow} \widehat{LSU}(n) \stackrel{\iota_3}{\longrightarrow} LSU(n),
\]
where $\widehat{Li_3}$ can be chosen to be a loop map. To see this we can apply the same argument on the level of the classifying spaces and then take the loop functor $\Omega$ to get a loop model of $\widehat{Li_3}$. Then there are the fibration
\[
BLU\langle 6\rangle (n)\stackrel{B\widehat{Li_3}}{\longrightarrow} B\widehat{LSU}(n) \stackrel{c_2}{\longrightarrow} K(\mathbb{Z}, 4)
\]
such that $B\widehat{Li_3}$ kills the class $c_2$, and the pullback factorization
\begin{gather}
\begin{aligned}
\xymatrixcolsep{3.2pc}
\xymatrix{
BLU\langle 6 \rangle(n)_l \ar[r]^<<<<<<<<<{B\widehat{Li_{3l}}} \ar[d]^{L\phi_3}    \ar@/^1.8pc/[rr]^{BLi_{3l}} &
B\widehat{LSU}(n)_l \ar[d]^{\xi_3} \ar[r]^<<<<<<{B\iota_{3l}} &
BLU(n)_l \ar[d]^{L \phi_2}\\
BLU\langle 6 \rangle(n)_{\mathbb{Q}} \ar[r]^{B\widehat{Li_3}_{\mathbb{Q}}}  \ar@/_1.8pc/[rr]^{BLi_{3\mathbb{Q}}} &
B\widehat{LSU}(n)_{\mathbb{Q}} \ar[r]^{B\iota_{3\mathbb{Q}}} &
BLSU(n)_{\mathbb{Q}}
}
\end{aligned}
\label{factorLfracu6diag}
\end{gather}
by the uniqueness of homotopy pullback. Diagram (\ref{factorLfracu6diag}) establishes the relation between the fractional loop SU-structure and the fractional U$\langle 6\rangle$-structure in the universal case.

Now let us turn to the fractional SU-bundle $E$ defined by Diagram (\ref{fracsunliftdiag}). The criteria of fractional U$\langle 6\rangle$-structure and fractional loop SU-structure are given in Theorem \ref{fracu6nthm} and Theorem \ref{fracloopsunthm} respectively. By the above analysis, if $E$ admits a fractional U$\langle 6\rangle$-structure, it is clear that $E$ is fractional loop SU. Conversely, if $E$ is fractional loop SU, then in order to lift it to a fractional U$\langle 6\rangle$-structure, one has to further kill the second fractional Chern class $c_2^{l,a}(LE)$.
\begin{theorem}\label{fracloopsuvsu6thm1}
Let $E$ be an $(a,\frac{1}{l})$-fractional $SU(n)$-bundle determined by Diagram (\ref{fracsunliftdiag}) and $l>1$. 

If $E$ admits an $(a,\frac{1}{l})$-fractional $U\langle 6\rangle (n)$-structure, then $E$ is fractional loop SU.

Conversely, suppose $E$ is fractional loop SU with the classifying map $(g_3^a, g_3^l)$ in Diagram (\ref{fracloopsunliftdiag}). Then it can be lifted to a fractional U$\langle 6\rangle$-structure if and only if there exists a map $g_3^{l\prime}$ such that the following diagram commutes up to homotopy
\begin{gather}
\begin{aligned}
\xymatrix{
& BLU\langle 6 \rangle(n)_{\mathbb{Q}}\ar[dr]^{B\widehat{Li_3}_{\mathbb{Q}}} \\
LM \ar@{.>}[ur]^<<<<<<<<{g_3^{l\prime}} \ar[rr]^<<<<<<<<<<<<<<<<<<{g_3^{l}}&&
B\widehat{LSU}(n)_{\mathbb{Q}}.
}
\end{aligned}
\label{fracloopsun+liftdiag}
\end{gather}
\end{theorem}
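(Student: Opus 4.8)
The plan is to run, essentially verbatim, the argument of Theorem \ref{fracloopuvssuthm1}, with ``fractional SU'' replaced by ``fractional U$\langle 6\rangle$'', ``fractional loop U'' by ``fractional loop SU'', $c_1$ by $c_2$, Diagram (\ref{factorLunifracsudiag}) by Diagram (\ref{factorLfracu6diag}), and the maps $B\iota_{2\mathbb{Q}}$, $B\widehat{Li_2}_{\mathbb{Q}}$, $BLi_{2\mathbb{Q}}$, $L\phi_2$ by $B\iota_{3\mathbb{Q}}$, $B\widehat{Li_3}_{\mathbb{Q}}$, $BLi_{3\mathbb{Q}}$, $L\phi_3$. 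First I would dispose of the implication that a fractional U$\langle 6\rangle$-structure forces a fractional loop SU-structure: from Diagram (\ref{factorLfracu6diag}) a classifying lift $f_3^l$ of the fractional U$\langle 6\rangle$-structure loops to a factorization of $Lf_2^l$ through $B\widehat{Li_3}_{\mathbb{Q}}$, whence $z_2^{l,a}(LE)=0$ by the fibration $BLU\langle 6\rangle(n)\stackrel{B\widehat{Li_3}}{\longrightarrow}B\widehat{LSU}(n)\stackrel{c_2}{\longrightarrow}K(\mathbb{Z},4)$ recorded just before Diagram (\ref{factorLfracu6diag}); alternatively this is immediate from $c_1^{l,a}(E)=c_2^{l,a}(E)=0$ together with $\nu(c_2^{l,a}(E))=z_2^{l,a}(LE)$ of (\ref{Etran1eq}).

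For the ``if and only if'' statement I would argue the two directions in turn. If the lift $g_3^{l\prime}$ in Diagram (\ref{fracloopsun+liftdiag}) exists, then since $B\widehat{Li_3}_{\mathbb{Q}}$ kills $c_2^{\q}$ the composite $c_2^{\q}\circ g_3^l\simeq c_2^{\q}\circ B\widehat{Li_3}_{\mathbb{Q}}\circ g_3^{l\prime}$ is null-homotopic, so $c_2^{l,a}(LE)=g_3^{l\ast}(c_2^{\q})=0$ by (\ref{fracloopsunchernforeq}); as the pullback $p^\ast$ along the evaluation map of (\ref{generalfreefib}) is injective (it splits via the constant-loop section), $c_2^{l,a}(E)=0$, and since $E$ is already fractional SU, Theorem \ref{fracu6nthm} produces the fractional U$\langle 6\rangle$-structure. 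Conversely, given a fractional U$\langle 6\rangle$-structure with classifying lift $f_3^l$ as in Diagram (\ref{fracu6nliftdiag}), I would put $g_3^{l\prime}:=Lf_3^l$ and verify Diagram (\ref{fracloopsun+liftdiag}) via
\[
B\iota_{3\mathbb{Q}}\circ g_3^l\simeq Lf_2^l\simeq BLi_{3\mathbb{Q}}\circ Lf_3^l\simeq B\iota_{3\mathbb{Q}}\circ B\widehat{Li_3}_{\mathbb{Q}}\circ Lf_3^l,
\]
where the first homotopy comes from Diagram (\ref{fracloopsunliftdiag}), the second from looping Diagram (\ref{fracu6nliftdiag}), and the third from Diagram (\ref{factorLfracu6diag}); cancelling $B\iota_{3\mathbb{Q}}$ on the left then gives $g_3^l\simeq B\widehat{Li_3}_{\mathbb{Q}}\circ Lf_3^l$.

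The only genuinely new point, and the one I expect to need the most care, is that cancellation: that $B\iota_{3\mathbb{Q}}\colon B\widehat{LSU}(n)_{\mathbb{Q}}\to BLSU(n)_{\mathbb{Q}}$ admits a left homotopy inverse, the analogue of the splitting $BLU(n)\simeq BL_0U(n)\times S^1$ used in Lemma \ref{xi2'=Lphi2lemma}. I would get this from Sullivan's rational homotopy theory: $BLSU(n)_{\mathbb{Q}}\simeq LBSU(n)_{\mathbb{Q}}\simeq BSU(n)_{\mathbb{Q}}\times SU(n)_{\mathbb{Q}}$ is a product of rational Eilenberg--MacLane spaces (the free-loop fibration over the $H$-space $BSU(n)_{\mathbb{Q}}$ splits as a product), the class $z_2^{\q}$ is up to a nonzero scalar the projection onto the unique $K(\mathbb{Q},3)$ factor of $SU(n)_{\mathbb{Q}}\simeq K(\mathbb{Q},3)\times K(\mathbb{Q},5)\times\cdots$, and the $U(1)$-extension defining $\widehat{LSU}(n)$ realizes $B\widehat{LSU}(n)_{\mathbb{Q}}$ as the homotopy fibre of $z_2^{\q}$; this fibre is precisely the complementary sub-product, so the projection off the $K(\mathbb{Q},3)$ factor is the required left homotopy inverse. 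With that in hand, all the remaining steps are a formal diagram chase identical to the proof of Theorem \ref{fracloopuvssuthm1}.
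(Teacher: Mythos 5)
Your proposal is correct and follows essentially the same route as the paper's own proof: the identical homotopy chain and cancellation of $B\iota_{3\mathbb{Q}}$ via a left homotopy inverse (your rational product decomposition simply makes explicit the paper's remark that the fibration $B\widehat{LSU}(n)_{\mathbb{Q}}\to BLSU(n)_{\mathbb{Q}}\to K(\mathbb{Q},3)$ splits because $z_2^{\q}$ is spherical), the same use of the injectivity of $p^\ast$ together with Theorem \ref{fracu6nthm}, and the same factorization diagram (\ref{factorLfracu6diag}). The only slip is cosmetic: in the first implication the vanishing of $z_2^{l,a}(LE)$ comes from $Lf_2^l$ factoring through $B\iota_{3\mathbb{Q}}$, which kills $z^{\q}_2$, rather than from the $c_2$-fibration you cite, but your alternative transgression argument via (\ref{Etran1eq}) is exactly the right one.
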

\begin{proof}
Suppose that the lift $g_3^{l\prime}$ in Diagram (\ref{fracloopsun+liftdiag}) exists. Then $c_2^{l, a}(LE)=g_3^{l^\ast}(c^{\q}_2)= g_3^{l\prime\ast} \circ B\widehat{Li_3}_{\mathbb{Q}}^\ast (c^{\q}_2)=0$. Since $p^\ast(c_2^{l, a}(E))=c_2^{l, a}(LE)$ with $p$ the evaluation map in (\ref{generalfreefib}) and $p^\ast$ is injective, we have $c_2^{l, a}(E)=0$, and then $E$ is fractional U$\langle 6\rangle$. Suppose a fractional U$\langle 6\rangle$-structure is given by a lift $f_3^l$ as in Diagram (\ref{fracu6nliftdiag}), we have to show that after looping it lifts the original fractional loop SU-structure given by $g_3^l$. Indeed, by Diagram (\ref{fracloopsun+liftdiag}) we have 
\[
B\iota_{3\mathbb{Q}}\circ  B\widehat{Li_3}_{\mathbb{Q}}\circ Lf_3^{l}
\simeq BLi_{3\mathbb{Q}} \circ Lf_3^{l}
\simeq Lf_2^l\simeq  B\iota_{3\mathbb{Q}}\circ g_3^l.
\]
On the other hand, since $z^{\q}_2$ is spherical the rational fibration
\[
B\widehat{LSU}(n)_{\mathbb{Q}}\stackrel{B\iota_{3\mathbb{Q}}}{\longrightarrow} BLSU(n)_\mathbb{Q} \stackrel{}{\longrightarrow}K(\mathbb{Q},3)\simeq S^3_{\mathbb{Q}}
\]
splits, and then $B\iota_{3\mathbb{Q}}$ admits a left homotopy inverse. It follows that $B\widehat{Li_3}_{\mathbb{Q}}\circ Lf_3^{l}\simeq g_3^l$. This shows that the fractional U$\langle 6\rangle$-structure defined by $f_3^l$ lifts the fractional loop SU-structure given by $g_3^l$. The converse statement and the other part of the theorem follow immediately from Diagram (\ref{fracloopsun+liftdiag}).
\end{proof}
Theorem \ref{fracloopsuvsu6thm1} and Diagram (\ref{fracloopsun+liftdiag}) are to compare the fractional U$\langle 6\rangle$-structure and the fractional loop SU-structure from the perspective of classifying spaces. Classically we can also compare them from the perspective of the free suspension.

\begin{theorem}\label{fracloopsuvsu6thm2}
Let $E$ be an $(a,\frac{1}{l})$-fractional $SU(n)$-bundle determined by Diagram (\ref{fracsunliftdiag}) and $l>1$. 

If $E$ is fractional U$\langle 6\rangle$, then $E$ is fractional loop SU. Moreover, the distinct fractional U$\langle 6\rangle$-structures on $E$ transgress to the fractional loop SU-structures via the free suspension
\[
\nu: H^3(M)\stackrel{}{\longrightarrow}H^2(LM).
\]
The converse statement is also true if the second fractional Chern class $c_2^{l,a}(E)$ is rationally spherical, that is, there exists a map $k_2: S^4\stackrel{}{\rightarrow} M$ such that $k_2^\ast(c_2^{l,a}(E))$ is nontrivial, and the rational Hurewicz morphism
\[h\otimes \mathbb{Q}: \pi_3(LM)\otimes \mathbb{Q} \longrightarrow H_3(LM;\mathbb{Q})\]
is injective.

In particular, when $M$ is rationally $2$-connected, $E$ is fractional loop SU implies that $E$ admits a fractional U$\langle 6\rangle$-structure. Moreover, if $M$ is $2$-connected, there is a one-to-one correspondence between the distinct fractional U$\langle 6\rangle$-structures and the fractional loop SU-structures on $E$ through the isomorphic free suspension.  
\end{theorem}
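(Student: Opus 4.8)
The plan is to run the argument of Theorem \ref{fracloopuvssuthm2} one homological degree higher, feeding in the transgression formulae of Subsection \ref{sec: freesus}, the obstruction identities (\ref{fracu6obintro}) and (\ref{fracloopsuobintro}), and the counting statements of Theorems \ref{fracu6nthm} and \ref{fracloopsunthm}. First I would record the basic transgression identity: for a fractional $SU(n)$-bundle $E$ one has $c_1^{l,a}(E)=0$, hence $c_1^{l,a}(LE)=p^\ast(c_1^{l,a}(E))=0$, so the cross term in (\ref{Etran1eq}) vanishes and $\nu(c_2^{l,a}(E))=z_2^{l,a}(LE)$ — equivalently, this is the second component of the displayed identity obtained by applying $\nu$ to (\ref{fracu6obintro}). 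The easy implication is then immediate: $c_2^{l,a}(E)=0$ forces $z_2^{l,a}(LE)=0$, so fractional $U\langle6\rangle$ implies fractional loop $SU$ (this also re-derives the first assertion of Theorem \ref{fracloopsuvsu6thm1}). For the transgression of parameter sets I would invoke the naturality of the free suspension across the Blakers--Massey diagrams (\ref{H34fracuprindiag}) and (\ref{H23loopfracuprindiag}): Theorem \ref{fracu6nthm} identifies the fractional $U\langle6\rangle$-structures on $E$ with a coset of $p_2^{l\ast}H^3(M)$, Theorem \ref{fracloopsunthm} does the analogous thing for fractional loop $SU$-structures and $H^2(LM)$, and $\nu$ intertwines the two obstruction pictures exactly as in the proof of Theorem \ref{fracloopuvssuthm2}. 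Since here $\nu:H^3(M)\to H^2(LM)$ need not be injective or surjective in general — in contrast to the degree-one map of Theorem \ref{fracloopuvssuthm2} — I claim only that distinct fractional $U\langle6\rangle$-structures transgress to fractional loop $SU$-structures, reserving the bijection for the connected cases.

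For the converse I would reuse the geometric model of the transgression on rationally spherical classes from the proof of Theorem~3.1 of \cite{Mcl} and Theorem~5.1 of \cite{DHH}, shifted up one degree, i.e. the degree-four analogue of Diagram (\ref{fracsugeotransdia}): a commutative square whose top row carries $\pi_4(M)\otimes\mathbb Q$ into $H^4(M;\mathbb Q)$ through the Hurewicz map, whose bottom row carries $\pi_3(LM)\otimes\mathbb Q$ into $H^3(LM;\mathbb Q)$, whose left edge is the split inclusion $i:\pi_4(M)\cong\pi_3(\Omega M)\hookrightarrow\pi_3(LM)$ induced by the fiber inclusion in (\ref{generalfreefib}), and whose right edge is $\nu$. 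If $c_2^{l,a}(E)$ is rationally spherical, pick $k_2:S^4\to M$ detecting it, so that $c_2^{l,a}(E)$ is, up to a nonzero scalar, the image of $[k_2]$ along the top row; if moreover $E$ is fractional loop $SU$, then $\nu(c_2^{l,a}(E))=z_2^{l,a}(LE)=0$, so by commutativity $i([k_2])$ dies in $H^3(LM;\mathbb Q)$. Here is where the extra hypothesis enters: the relevant map $\pi_3(LM)\otimes\mathbb Q\to H^3(LM;\mathbb Q)$ need not be injective — this was automatic in degree one since $\pi_1(-)\otimes\mathbb Q\cong H_1(-;\mathbb Q)$ — but the assumed injectivity of $h\otimes\mathbb Q:\pi_3(LM)\otimes\mathbb Q\to H_3(LM;\mathbb Q)$ supplies it, giving $i([k_2])=0$, hence $[k_2]=0$ since $i$ is split injective, hence $c_2^{l,a}(E)=0$ and $E$ is fractional $U\langle6\rangle$.

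Finally the special cases come from rational connectivity. If $M$ is rationally $2$-connected, then $h\otimes\mathbb Q:\pi_4(M)\otimes\mathbb Q\to H_4(M;\mathbb Q)$ is an isomorphism by the rational Hurewicz theorem, so every class of $H^4(M;\mathbb Q)$, in particular $c_2^{l,a}(E)$, is rationally spherical; and a short inspection of a standard rational-homotopy model of $LM$ (for such $M$ it carries no differential through degree $3$) shows $h\otimes\mathbb Q:\pi_3(LM)\otimes\mathbb Q\to H_3(LM;\mathbb Q)$ is an isomorphism, so both hypotheses of the converse hold and fractional loop $SU$ implies fractional $U\langle6\rangle$. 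If $M$ is honestly $2$-connected, then $H^1(M)=H^2(M)=0$ and $\Omega M$ is $1$-connected, so the edge homomorphism $i^\ast:H^2(LM)\to H^2(\Omega M)$ in the Serre spectral sequence of (\ref{generalfreefib}) is an isomorphism; combined with $i^\ast\circ\nu=\sigma^\ast$ and the fact that $\sigma^\ast:H^3(M)\to H^2(\Omega M)$ is an isomorphism in this range, $\nu:H^3(M)\to H^2(LM)$ is an isomorphism, and with the parametrizations of Theorems \ref{fracu6nthm} and \ref{fracloopsunthm} this yields the one-to-one correspondence. I expect the \emph{main obstacle} to be the converse: pinning down precisely the degree-four transgression square describing $\nu$ on spherical classes, and verifying both that injectivity of $\pi_3(LM)\otimes\mathbb Q\to H_3(LM;\mathbb Q)$ is exactly the input this argument needs and that it is genuinely forced by rational $2$-connectivity.
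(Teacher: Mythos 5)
Your proposal is correct and follows essentially the same route as the paper's proof: the identity $\nu(c_2^{l,a}(E))=z_2^{l,a}(LE)$ gives the easy direction and, with the parametrizations of Theorems \ref{fracu6nthm} and \ref{fracloopsunthm}, the transgression of parameter sets; the converse uses exactly the degree-four analogue of Diagram (\ref{fracsugeotransdia}) (the paper's Diagram (\ref{fracsugeotransdia2})) together with the assumed injectivity of $h\otimes\mathbb{Q}$ on $\pi_3(LM)\otimes\mathbb{Q}$; and the special cases follow from the rational Hurewicz theorem and the isomorphism $\nu: H^3(M)\to H^2(LM)$ for $2$-connected $M$. The only (minor and harmless) divergence is your verification that $h\otimes\mathbb{Q}:\pi_3(LM)\otimes\mathbb{Q}\to H_3(LM;\mathbb{Q})$ is an isomorphism when $M$ is rationally $2$-connected via a Sullivan model of $LM$, where the paper instead argues by naturality of the Hurewicz map under the evaluation $p: LM\to M$.
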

\begin{proof}
Recall by (\ref{Etran1eq}) $\nu(c_2^{l,a}(E))=z_2^{l,a}(LE)$ provided by $E$ is fractional SU. Then if $E$ admits a fractional U$\langle 6\rangle$-structure, $c_2^{l,a}(E)=0$ implies that $z_2^{l,a}(LE)=0$ which means that $E$ admits a fractional loop SU-structure. By Theorem \ref{fracu6nthm} and Theorem \ref{fracloopsunthm}, the two structures are classified by $H^3(M)$ and $H^2(LM)$ respectively. Then by the naturality of the free suspension, the distinct fractional U$\langle 6\rangle$-structures on $E$ transgress to the fractional loop SU-structures via the free suspension.

To prove the converse statement, we adopt the idea in the proof of Theorem $3.1$ in \cite{Mcl} which describes the free suspension $\nu$ geometrically for the elements in the Hurewicz image. As summarized in the proof of Theorem 5.1 of \cite{DHH}, there is the commutative diagram
\begin{gather}
\begin{aligned}
\xymatrix{
\pi_4(M)\otimes \mathbb{Q} \ar[r]^{h\otimes \mathbb{Q} } \ar@{_{(}->}[d]^{i\otimes \mathbb{Q}}  & H_4(M;\mathbb{Q})  \ar[r]^{{\rm dual}\ \ }_{\cong}  &H^4(M;\mathbb{Q}) \ar[d]^{\nu}  \\
\pi_3(LM) \otimes \mathbb{Q}\ar[r]^{h\otimes \mathbb{Q} }                      & H_3(LM; \mathbb{Q}) \ar[r]^{{\rm dual}\ \ }_{\cong} & H^3(LM;\mathbb{Q}),
}
\end{aligned}
\label{fracsugeotransdia2}
\end{gather}
where $h$ is the Hurewicz homomorphism, and $i$ is the inclusion under the isomorphism $\pi_3(LM)\cong \pi_4(M)\oplus \pi_3(M)$. By the assumption, $c_2^{l, a}(E)\in H^4(M;\mathbb{Q})$ is rationally spherical. It follows that the composition in the top row of Diagram (\ref{fracsugeotransdia2}) sends the class $[k_2]$ to $c_2^{l, a}(E)$ up to a nonzero constant.
Suppose $E$ is fractional loop SU, or equivalently $\nu(c_2^{l,a}(E))=z_2^{l,a}(LE)=0$. By the commutativity of Diagram (\ref{fracsugeotransdia2}) and the assumption on the injectivity of $h\otimes \mathbb{Q}$ we have $i([k_2])=0$ and then $[k_2]=0$. Hence $c_2^{l, a}(E)=0$ and $E$ is fractional U$\langle 6\rangle$. This proved the converse statement.
 
If $M$ is rationally $2$-connected, then $h\otimes \mathbb{Q}$ in the top row of Diagram (\ref{fracsugeotransdia2}) is an isomorphism by the rational Hurewicz theorem. Further by the naturality of Hurewicz homomorphism, the diagram 
\begin{gather*}
\begin{aligned}
\xymatrix{
\pi_3(LM)\otimes \mathbb{Q} \ar[r]^{h\otimes \mathbb{Q}} \ar[d]^{p_\ast\otimes \mathbb{Q}}_{\cong} &
H_3(LM)\otimes \mathbb{Q}\ar[d]^{p_\ast\otimes \mathbb{Q}}_{\cong}\\
\pi_3(M)\otimes \mathbb{Q} \ar[r]^{h\otimes \mathbb{Q}}_{\cong} &
H_3(M)\otimes \mathbb{Q}
}
\end{aligned}
\label{}
\end{gather*}
commutes and implies that $h\otimes \mathbb{Q}$ in the bottom row of Diagram (\ref{fracsugeotransdia2}) is an isomorphism. Hence the two conditions in the theorem are satisfied and then $E$ is fractional U$\langle 6\rangle$. 
If $M$ is further $2$-connected, $\nu: H^3(M)\stackrel{}{\rightarrow} H^2(LM)\cong H^2(\Omega M)$ is an isomorphism. Hence, there is a one-to-one correspondence between the distinct fractional U$\langle 6\rangle$-structures and the fractional loop SU-structures on $E$ through $\nu$.
This proves the two special cases and the theorem is proved.
 \end{proof}

%%%%%%%%%%%%%%%%%%%%%%%%%%%%%%%%%%%%%%%%%%%%%%%%%%%%%%%%%%%%%%%%%

\newpage

\section*{List of notations for characteristic classes}
\label{AppendixA}

We list the characteristic classes defined and used in this paper in the following table.
In the second column, $\nu$ is the free suspension defined in Subsection \ref{sec: freesus}, and the mentioned maps are in Diagram (\ref{fracudiagintro}).
In the third column, the page number is listed for the place where the classes first appear from Section \ref{sec: split} forward.

\begin{center}
\begin{tabular}{m{4.5cm} m{6.0cm}m{1.3cm} }
($1\leq k\leq n$, $i=1$, $2$)\\
~&~&\\
\hline
~&~&\\
Universal classes:\\
~&~&\\
$g\in H^2(BU(1))$        &                                           & Page 24\\
$c_k\in H^{2k}(BU(n))$ & the $k$-th universal Chern class& Page 24 \\  
$c_k^{\q}\in H^{2k}(BU(n);\mathbb{Q})$ & the $k$-th universal rational Chern class & Page 24 \\  
$\bar{c}_1, c_k\in H^{2k}(BU(n)_l)$ &  $s\bar{c}_1=c_1$ & Page 28 \\  
$\bar{c}_1, c_k\in H^{2k}(BU\langle 6\rangle (n)_l)$ &  $s\bar{c}_1=c_1$ & Page 31 \\

~&~&\\

$g, h\in H^\ast (BLU(1))$     &       $h=\nu(g)$     & Page 37\\
$c_i\in H^{2i}(BLU(n))$  & $z_2+z_1c_1=\nu(c_2)$, $z_1=\nu(c_1)$ & Page 34 \\
$z_i\in H^{2i-1}(BLU(n))$  &  & Page 34 \\
$c_i^{\q}\in H^{2i}(BLU(n);\mathbb{Q})$&  universal rational loop classes  & Page 36 \\
$z_i^{\q}\in H^{2i-1}(BLU(n);\mathbb{Q})$&    & Page 36 \\
$\bar{z}_1, c_i, z_i\in H^{\ast}(B\overline{LU}(n)_l)$  & $s\bar{z}_1=z_1$, & Page 37 \\
$\bar{c}_1, \bar{z}_1, c_i, z_i\in H^{\ast}(BLU(n)_l)$  & $s\bar{c}_1=c_1$, $s\bar{z}_1=z_1$ & Page 35 \\
$\bar{c}_1, \bar{z}_1, c_i, z_i\in H^{\ast}(B\widehat{LU}(n)_l)$  & $s\bar{c}_1=c_1$, $s\bar{z}_1=z_1$ & Page 45 \\
~&~&\\
\hline
~&~&\\
For a fractional U-bundle $E$: \\
~&~&\\

$a\in H^2(Y)$        &                   $f^{a\ast}(g)=a$                             & Page 21\\
$c_k(E)\in H^{2k}(Y)$ &   the $k$-th Chern class & Page 22 \\ 
$c_k^{l,a}(E)\in H^{2k}(M;\mathbb{Q})$ & the $k$-th fractional Chern class & Page 22 \\ 
~&~&\\
$a\in H^2(LY)$      &                    $Lf^{a\ast}(g)=a$                           & Page 38\\
$\mathfrak{a}\in H^1(LY)$      &     $Lf^{a\ast}(h)=\mathfrak{a}=\nu(a)$                  & Page 38\\
$c_i(LE)\in H^{2i}(LY)$  &$z_2(LE)+z_1(LE)c_1(LE)=\nu(c_2(E))$& Page 36 \\
$z_i(LE)\in H^{2i-1}(LY)$ & $z_1(LE)=\nu(c_1(E))$  & Page 36 \\
$c_i^{l,a}(LE)\in H^{2i}(LM;\mathbb{Q})$& fractional loop classes & Page 36 \\
$z_i^{l,a}(LE)\in H^{2i-1}(LM;\mathbb{Q})$& & Page 36
\end{tabular}
\end{center}

%%%%%%%%%%%%%%%%%%%%%%%%%%%%%%%%%%%%%%%%%%%%%%%%%%%%%%%%%%%%%%%%%%%%%%

\newpage
\bibliographystyle{\alpha}

\end{sloppypar}
\end{document}